\documentclass[11pt]{sat}

\newif\ifsattoc\sattoctrue
\newread\testfl\immediate\openin\testfl=\jobname.toc
    \ifeof\testfl\sattocfalse\fi\closein\testfl

\title{A survey of sampling  discretization  of integral and uniform norms}
\def\shorttitle{A survey of sampling  discretization}

\author{F. Dai, E. Kosov and   V. Temlyakov   	\footnote{
		The first named author's research was partially supported by NSERC of Canada Discovery Grant
		RGPIN-2020-03909.
	The second named author's research was supported by by the AEI grants
	RYC2023-043616-I and PID2023-150984NB-I00 funded by MICIU/AEI/10.13039/501100011033/ FEDER, EU, and by the Spanish State Research Agency, through the Severo Ochoa and Mar\'ia de Maeztu Program for Centers and Units of Excellence in R\&D (CEX2020-001084-M). The second named author thanks CERCA Programme (Generalitat de Catalunya) for institutional support.
The third named author's research (Subsections 3.1, 3.2 and Section 7) was supported by the Russian Science Foundation under grant no. 23-71-30001, https://rscf.ru/project/23-71-30001/, and performed at Lomonosov Moscow State University.
		  }}
\def\shortauthor{ Dai,  Kosov and    Temlyakov}

\def\versiondate{\today}

\def\MSCnumbers{41-02, 41A17, 41A10, 47-02} 

\def\keywords{} 

\usepackage[centertags]{amsmath}

\usepackage{amsfonts}

\usepackage{amssymb}

\usepackage{latexsym}

\usepackage{amsthm}

\usepackage{newlfont}

\usepackage{graphicx}

\usepackage{listings}

\usepackage{booktabs}

\usepackage{abstract}

\usepackage{enumerate}

\usepackage{xcolor}

\RequirePackage{srcltx}
\usepackage{comment}

\newlength{\defbaselineskip}

\newcommand{\setlinespacing}[1]%
           {\setlength{\baselineskip}{#1 \defbaselineskip}}

\newcommand{\actaqed}{\hfill $\actabox$}

{\medskip\noindent \textit{Proof of #1. }}%
{\actaqed \medskip}

\def\cA{{\mathcal A}}

\def\cC{{\mathcal C}}

\def\cF{{\mathcal F}}

\def\cH{{\mathcal H}}

\def\cX{{\mathcal X}}

\def\bt{\mathbf t}

\def\bx{\mathbf x}

\def\by{\mathbf y}

 \def \<{\langle}

\def\>{\rangle}

\def \La{\Lambda}

\def \Og{\Omega}

\def \e{\varepsilon}

\def \va{\varepsilon}

\def \ff{\varphi}

\def\al{\alpha}

\def\bt{\beta}

\def\ga{\gamma}

\def\la{\lambda}

\def\f{\frac}

\def\bt{\beta}

\newtheorem{Theorem}{Theorem}[section]

\newtheorem{Lemma}[Theorem]{Lemma}

\newtheorem{Proposition}[Theorem]{Proposition}

\newtheorem{Corollary}[Theorem]{Corollary}

\numberwithin{equation}{section}

\theoremstyle{definition}

\newtheorem{Definition}[Theorem]{Definition}

\newtheorem{Remark}[Theorem]{Remark}

\newtheorem{Example}[Theorem]{Example}

\newcommand{\be}{\begin{equation}}

\newcommand{\ee}{\end{equation}}

\usepackage{hyperref}

\usepackage{mathabx}

\def\ga{\gamma}
\def\Ga{\Gamma}
\def\sa{\sigma}

\def\Ld{\Lambda}

\def\Og{\Omega}
\newcommand{\ld}{\lambda}
\def\va{\epsilon}
\def\vi{\varphi}

\def\ZZ{\mathbb{Z}}
\def\EE{\mathbb{E}}

\def\RR{\mathbb{R}}
\def\SS{\mathbb{S}}

\def\NN{\mathbb{N}}
\def\PP{\mathbb{P}}
\def\CW{\mathcal{W}}

\def\Br{\Bigr}
\def\Bl{\Bigl}

\def \spn{\operatorname{span}}

\newcommand{\eq}[1]{\begin{align*}#1\end{align*}}

\def\opn{\operatorname}

\def\mathscr{\mathcal}

\newcommand{\beq}{\begin{equation}}
\newcommand{\enq}{\end{equation}}

\def\D{{\mathcal D}}

\def\CC{\mathbb{C}}

\def\CD{\mathcal{D}}

\DeclareFontEncoding{FMS}{}{}
\DeclareFontSubstitution{FMS}{futm}{m}{n}
\DeclareFontEncoding{FMX}{}{}
\DeclareFontSubstitution{FMX}{futm}{m}{n}
\DeclareSymbolFont{fouriersymbols}{FMS}{futm}{m}{n}
\DeclareSymbolFont{fourierlargesymbols}{FMX}{futm}{m}{n}
\DeclareMathDelimiter{\VT}{\mathord}{fouriersymbols}{152}{fourierlargesymbols}{147}

\def\startpagenumber{1}
\def\volumenumber{6} 
\def\year{2026}

\def\ee{{\rm e}}  

\newcommand{\beginddoc}{
\maketitle
\begin{abstract}
 This paper surveys recent developments in the sampling discretization of integral and uniform norms for functions in general finite-dimensional spaces. These results generalize the classical Marcinkiewicz-Zygmund inequalities for trigonometric and algebraic polynomials, which  play a crucial role in Fourier analysis, interpolation, and approximation theory. We focus on the problem in  the broad context of finite-dimensional subspaces, where norms defined by general probability measures are approximated  by their discrete counterparts.
   The primary emphasis is  on  results  closely related to the authors' recent research.   A key objective  is to highlight   the main ideas and techniques  that form the foundation of the proofs  in this area.  This survey serves as a complement to three recently published  survey papers  on sampling discretization \cite{DPTT, KKLT, LMT}.
\vskip1pt MSC: \MSCnumbers
\ifx\keywords\empty\else\vskip1pt Keywords: \keywords\fi
\end{abstract}
\insert\footins{\scriptsize
\medskip
\baselineskip 8pt
\leftline{Surveys in Approximation Theory}
\leftline{Volume \volumenumber, \year.
pp.~\thepage--\pageref{endpage}.}
\leftline{\copyright\ \year\ Surveys in Approximation Theory.}
\leftline{ISSN 1555-578X}
\leftline{All rights of reproduction in any form reserved.}
\smallskip
\par\allowbreak}
\ifsattoc\else\tableofcontents\fi}
\renewcommand\rightmark{\ifodd\thepage{\it \hfill\shorttitle\hfill}\else {\it \hfill\shortauthor\hfill}\fi}
\def\endddoc{\label{endpage}\end{document}}
\date{{\small \versiondate}}
\RequirePackage{hyperref}
\begin{document}
\beginddoc
\ifsattoc
\bigskip
\def\toczer{0}\def\tochalf{.5}\def\tocone{1}
\def\tocindent{0}
\def\ection{section}\def\ubsection{subsection}
\def\numberline#1{\hskip\tocindent truecm{} #1\hskip1em}
\newread\testfl
\def\inputifthere#1{\immediate\openin\testfl=#1
    \ifeof\testfl\message{(#1 does not yet exist)}
    \else\input#1\fi\closein\testfl}
\countdef\counter=255
\def\diamondleaders{\global\advance\counter by 1
  \ifodd\counter \kern-10pt \fi
  \leaders\hbox to 15pt{\ifodd\counter \kern13pt \else\kern3pt \fi
  \hss.\hss}\hfill}
\newdimen\lextent
\newtoks\writestuff
\medskip
\begingroup
\small
\def\contentsline#1#2#3#4{
\def\argu{#1}
\ifx\argu\ection\let\tocindent\toczer\else
\ifx\argu\ubsection\let\tocindent\tochalf\else\let\tocindent\tocone\fi\fi
\setbox1=\hbox{#2}\ifnum\wd1>\lextent\lextent\wd1\fi}
\lextent0pt\inputifthere{\jobname.toc}\advance\lextent by 2em\relax
\def\contentsline#1#2#3#4{
\def\argu{#1}
\ifx\argu\ection\let\tocindent\toczer\else
\ifx\argu\ubsection\let\tocindent\tochalf\else\let\tocindent\tocone\fi\fi
\writestuff={#2}
\centerline{\hbox to \lextent{\rm\the\writestuff%
\ifx\empty#3\else\diamondleaders{}
\hfil\hbox to 2 em\fi{\hss#3}}}}
\inputifthere{\jobname.toc}\endgroup
\immediate\openout\testfl=\jobname.toc 
\immediate\closeout\testfl             
\renewcommand{\contentsname}{}         
\tableofcontents\newpage               
\fi



\section{Introduction}

 The classical Marcinkiewicz inequality  has the following original form (see \cite[ Vol. II, p. 30]{Zy}):  For any $1<p<\infty$, there exists a constant $C_p\ge 1$ depending only on $p$ such that for every
 $f\in \mathcal{T}_{n}:=\opn{span} \bigl\{ e^{\mathbf{i}kx}\colon k\in \ZZ, \,  -n\leq k\leq n\bigr\} $,
       \begin{equation}\label{1-1}
    	C_p^{-1}\|f\|_{L_p[0, 2\pi]}\leq\|f\|_{L_p( \Ld)} \leq C_p\|f\|_{L_p[0, 2\pi]},
       \end{equation}
   where  $\Ld=\Lambda_{2n+1}:=\bigl\{ \frac {2j\pi} {2n+1}\colon j=0,1,\ldots, 2n\bigr\}$,
    and $\|f\|_{L_p(\Lambda)}$ denotes the  $L_p$-norm of $f$ defined with respect to the uniform probability measure on $\Ld$; that is,
     \begin{align*}
     \|f\|_{L_p(\Lambda)} :&=\begin{cases}
    \Bl( \frac { 1}{|\Ld|} \sum\limits_{\omega\,\in \Lambda} | f(\omega)|^p\Br)^{1/p},&\  \ \text{if $1\leq p<\infty$,}\\
    \max\limits_{\omega\, \in\Lambda}|f(\omega)|, &\  \ \text{if $p=\infty$}.
    \end{cases}
        \end{align*}
        Throughout this paper, $|E|$ denotes the cardinality of a finite set $E$. It is important to note that the constant $C_p$ on the right-hand side of  \eqref{1-1}  can be chosen to be independent of $p$. Indeed, the right-hand  inequality in \eqref{1-1} remains  true
     at the endpoints $p=1$, $\infty$.  However,   the left-hand inequality requires stronger conditions at the endpoints. It holds  for  the full range of $1\leq p\leq \infty$ with $\Lambda_{4n+1}$ in place of
     $\Lambda_{2n+1}$.

Historically, the inequality \eqref{1-1} for the case  $p=\infty$ (discretization of the uniform norm)
was first established by Bernstein in  1931-1932 \cite{B1} and \cite{B2}.
The result for $1<p<\infty$  was proved  by J. Marcinkiewicz, while A. Zygmund extended the study to the endpoints    $p=1,\infty$
in 1937 (see \cite[pp.4]{KKLT} and \cite[ Vol. II, p. 30]{Zy}).   Therefore, inequalities  of the form~\eqref{1-1} are often referred to as
Bernstein-type theorems when $p=\infty$, and  Marcinkiewicz-type theorems  when $1 \le p < \infty$ (see~\cite{DPTT, KKLT, VT158, Te18}).
In broader literature, these  results are known as Marcinkiewicz--Zygmund inequalities
(see, for  instance, \cite{BD, DPTT, KKLT}).
The Marcinkiewcz type inequality  \eqref{1-1}  is a basic tool in classical analysis, playing a crucial role in the study of Fourier series convergence, Lagrange interpolation, sampling discretization, and weighted approximation.
For various extensions of these results, we refer the reader to  \cite{BD, DPTT, KKLT, Ko-Lu, Lu1, Lu2,  Ma-To} and the references therein.

{ This survey  focuses primarily on recent work by the authors and their collaborators concerning Marcinkiewicz-type and Bernstein-type theorems for general finite-dimensional subspaces.  We do not impose  structural assumptions on these spaces, except for  conditions on the embedding into the space of bounded functions. Our  objective is to highlight the  key techniques and methods used to  establish  inequalities of the form~\eqref{1-1} in this general setting. The primary technical tools rely on concentration inequalities rooted in the work of J.~Bourgain,  J.~Lindenstrauss,  V.D.~Milman,  G.~Schechtman and others (see, e.g.,~\cite{BLM, JS, Mil71} and references thererin), as well as  chaining arguments (see~\cite{Ta}). We aim  to  demonstrate how these techniques apply specifically to the problem of sampling discretization.}

{ It should be noted that this survey is largely confined to results  closely related to the authors' recent research  on sampling discretization.
Given the extensive  literature in this area, the references provided here are by no means complete. Our intention is to focus on the results we are most familiar with, rather than covering topics beyond the immediate scope of our work. In particular,  we do not discuss in detail the important and related applications in sampling recovery. For these topics, we refer the reader to the surveys~\cite{DTSurvRec, KKLT} and the significant recent contributions by D.~Krieg, M.~Ullrich,  T.~Ullrich and their collaborators (see~\cite{BKPU, DKU, JUV,  KPU,  KS, KU, NSU, MU} and references therein).}

Let us introduce a general framework for the sampling discretization problem we are going to study.
Let $(\Omega, \mu,\cF)$ be  a  probability space, where $\cF$ is a $\sigma$-algebra of subsets of $\Omega$ and $\mu$ is a probability measure on $\cF$. Given $1\leq p< \infty$, let $L_p(\Omega,\mu)$ denote   the $L_p$-space defined with respect to the measure $\mu$ on $\Omega$, equipped with the norm $\|\cdot\|_{L_p(\Omega, \mu)}$ given by
\[
\|f\|_{L_p(\Omega,\mu)}=\Bigl( \int_{\Omega} |f|^p\, d\mu\Bigr)^{1/p}.
\]
Let $X_N$ denote a linear  space of  $\cF$-measurable  functions on $\Omega$ of dimension $N$ (where $N$ is typically large).
We assume that every  function in  $X_N$ is defined   everywhere on $\Omega$, and satisfies
\[
\|f\|_{L_\infty(\Omega)}:=\sup_{x\in\Omega} |f(x)|.
\]
Finally, we denote the unit ball of $X_N$ in $L_p(\Omega, \mu)$ by
\[
X_N^p:=\bigl\{ f\in X_N\colon \|f\|_{L_p(\Omega, \mu)}\leq 1\bigr\}.
\]

\vskip .1in

{\bf The Marcinkiewicz  discretization  problem with weights.}
Let $C_2\ge C_1>0$ be two given positive  constants.
The weighted  Marcinkiewicz  discretization of the $L_p$ norm, $1\leq p<\infty$, in the space $X_N$ approximates the continuous $L_p(\Omega, \mu)$ norm using
a discrete signed measure supported  on a  finite subset $\Omega_m =\{\xi^1,\cdots, \xi^m\} \subset \Omega$. Specifically, we require that the integral norm of every $f \in X_N$ is bounded by a weighted discrete sum as follows:
\begin{align}
C_1\int_{\Omega} |f|^p\, d\mu\leq  \sum_{j=1}^m \lambda_j |f(\xi^j)|^p\leq C_2 \int_{\Omega} |f|^p\, d\mu,\label{1-2}
\end{align}
where  the nodes  $\xi^j\in  \Og$  and the weights $\lambda_j\in  \RR$ are independent of $f\in X_N$.
{
If the estimate~\eqref{1-2} holds for every function $f$ in  a subspace $X_N$, we say that $X_N$ admits a
{\it weighted Marcinkiewicz-type discretization theorem} with parameters $p$ and
$m \in \mathbb{N}$, and constants $C_1, C_2$.
}

Of particular interest are the following special cases of \eqref{1-2} { where the weights are uniform, i.e.,  $\ld_1=\cdots=\ld_m=\frac 1m$.}
	
\begin{enumerate}[\rm (i)]
\item  {\bf The Marcinkiewicz  discretization  problem.}
{
We say that $X_N$ admits a {\it Marcinkiewicz-type discretization theorem}
with parameters $p$ and
$m \in \mathbb{N}$, and constants $C_1, C_2$
if }
\begin{equation}\label{1-3}
C_1  \|f\|_{L_p(\Omega, \mu)}^p \le \frac 1m \sum_{j=1}^m  |f(\xi^j)|^p \le  C_2 \|f\|_{L_p(\Omega, \mu)}^p,\   \  \forall f\in X_N.
\end{equation}
\item {\bf  The Marcinkiewicz  discretization  problem with  $\epsilon\in (0, 1)$.}
{
We say that $X_N$ admits a {\it Marcinkiewicz-type discretization theorem with $\epsilon$} and
parameters $p$ and
$m \in \mathbb{N}$
if }
\begin{equation}\label{1-4}
(1-\epsilon) \|f\|_{L_p(\Omega, \mu)}^p \le \frac{1}{m}\sum_{j=1}^m  |f(\xi^j)|^p \le (1+\epsilon)\|f\|_{L_p(\Omega, \mu)}^p,\   \  \forall f\in X_N.
\end{equation}
	\end{enumerate}


\vskip .1in

{\bf The Bernstein discretization problem.}  The above  definition can also be modified to include the case $p=\infty$, where we ask for
$$
C_1\|f\|_{L_\infty(\Omega)} \leq \max _{1 \leq j \leq m}\left|f\left(\xi^j\right)\right|
\leq\|f\|_{L_\infty(\Omega)},\   \ \forall f\in X_N.
$$

{\bf  Universal Marcinkiewicz-type   discretization problem. }
We can also consider a universal Marcinkiewicz-type   discretization problem for a finite  collection  $\cX:= \{X_k\}_{k=1}^L$ of finite-dimensional   spaces of functions  on $\Omega$. Given $1\leq p<\infty$, we say that a finite set $ \{\xi^j\}_{j=1}^m \subset \Omega $ provides {\it universal discretization} of  the $L_p$ norm for  the collection $\cX$ with  positive constants $C_1, C_2$ if for any $f\in \bigcup_{k=1}^L X_k$,
\[
C_1\|f\|_{L_p(\Omega, \mu)}^p \le \frac{1}{m} \sum_{j=1}^m |f(\xi^j)|^p \le C_2\|f\|_{L_p(\Omega, \mu)}^p.
\]
Universal weighted discretization can be defined likewise.

   In the  framework  described above, $X_N$ can be either a real or complex space.  For Marcinkiewicz discretization  with general constants $C_1, C_2>0$, results for complex spaces can be derived from real case by separating  the real and imaginary parts.  However, this  reduction fails for the Marcinkiewicz discretization problem with $\va>0$, { as the separation argument distorts the isometric constants $(1 \pm \epsilon)$}.  For  simplicity, we  generally   assume   $X_N$ is a real space unless stated otherwise. Nevertheless, most  results extend  to complex spaces using similar techniques.

\vskip .1in

{This survey presents tools and techniques for the Marcinkiewicz discretization problem on finite-dimensional spaces subject to certain additional conditions.
Typically,  two main types of conditions are used: (i) bounds on entropy numbers;  and (ii)  Nikol'skii-type inequalities.
{
We postpone the definition and discussion of entropy numbers to Section~\ref{entropy}.   In this section,   we focus on {Nikol'skii}-type inequalities. These inequalities constitute the main assumption imposed on the subspaces in our setting and  appear  throughout the paper  in the formulation of various  results.
}

{\bf Nikol'skii-type inequality.}
Given  $p\in [1,\infty)$, an $N$-dimensional subspace  $X_N\subset L_\infty(\Omega)$ is said to satisfy  the  $(p,\infty)$-Nikol'skii inequality with constant $M>0$ if
\begin{equation}\label{1-6}
\|f\|_{L_\infty(\Omega)} \leq M\|f\|_{L_p(\Omega, \mu)},\   \ \forall f\in X_N,
\end{equation}
in which case we write $X_N\in \textnormal{NI}_{p, \infty}(M)$.  Of particular interest is the case of $p=2$, where the exact constant $M$ in \eqref{1-6} can be written explicitly as follows. Let $\{\varphi_j(x)\}_{j=1}^N$ be an orthonormal  basis of $X_N\subset L_2(\Omega, \mu)$. Then we have
$$
\sup_{f\in  X_N, \|f\|_{L_2(\Omega, \mu)}\le 1} |f(x)| = \Bigl(\sum_{j=1}^N \varphi_j(x)^2\Bigr)^{1/2} =: \sqrt{\Phi(x)},\  \ \forall x\in\Omega,
$$
which implies
\begin{equation}\label{Nik-equiv}
X_N\in \textnormal{NI}_{2,\infty} (M)\iff  \|\Phi\|_{L_\infty(\Omega)}\leq M^2.
\end{equation}
The function
\[
\lambda(X_N, \mu; x):=\frac{1}{\Phi(x)},
\]
known as the Christoffel function of the space $X_N$, is independent of the selection of orthonormal basis of $X_N$.
{
Finally, since
\[
\|f\|_{L_2(\Omega, \mu)}^2 \le \|f\|_{L_p(\Omega, \mu)}^p \, \|f\|_{L_\infty(\Omega)}^{2-p}
\]
 for $1 \le p \le 2$,  the condition
\[
\|f\|_{L_\infty(\Omega)} \le \sqrt{M}\,\|f\|_{{L_2(\Omega, \mu)}}
\]
implies
\begin{equation}\label{Nik-impl}
	\|f\|_{L_2(\Omega, \mu)} \le M^{\frac{1}{p}-\frac{1}{2}} \|f\|_{L_p(\Omega, \mu)}
	\quad \text{and} \quad
	\|f\|_{L_\infty(\Omega)} \le M^{1/p} \|f\|_{L_p(\Omega, \mu)}.
\end{equation}
In terms of the Nikol'skii inequalities,  this establishes the implication
\begin{equation}\label{1-7} X_N\in \textnormal{NI}_{2,\infty}(\sqrt{M})\implies
X_N\in \textnormal{NI}_{p,\infty} (M^{1/p})\   \ \text{for any $1\leq p\leq 2$}.
\end{equation}
}


%


{\bf Notation.} To conclude this section, we introduce some notation that will be used throughout this paper.
Given a finite set  of points $\Omega_m=\{\xi^1,\ldots, \xi^m\}$, and $1\leq p\leq \infty$, we denote by $L_p(\Omega_m)$ the space of all functions on $\Omega_m$ equipped with the norm
$$ \|f\|_{L_p(\Omega_m)}:=\begin{cases}
	\bigl(\frac 1m \sum\limits_{j=1}^m|f(\xi^j)|^p\bigr)^{1/p},&\  \ \text{if $1\leq p<\infty$},\\[5mm]
	\max\limits_{1\leq j\leq m} |f(\xi^j)|, &\   \ \text{if $p=\infty$}.
\end{cases}$$
We  identify  vectors in $\mathbb{R}^m$  with  functions defined  on the set
$$
\Omega_m := \{1, \ldots, m\}.
$$
{In this special case, we adopt the following conventions for vectors $f \in \mathbb{R}^m$.  We denote the normalized norm $\|f\|_{L_p(\Og_m)}$ by $\|f\|_{L_p^m}$ and the unnormalized norm by
\[
\|f\|_{\ell_p^m} := m^{1/p} \|f\|_{L_p^m}
=
\begin{cases}
	\bigl( \sum\limits_{j=1}^m|f(j)|^p\bigr)^{1/p},&\  \ \text{if $1\leq p<\infty$},\\[5mm]
	\max\limits_{1\leq j\leq m} |f(j)|, &\   \ \text{if $p=\infty$}.
\end{cases}
\]
}
Accordingly, we define the unit ball
\[
B_{\ell_p^m} := \bigl\{ f \in \mathbb{R}^m\colon \|f\|_{\ell_p^m} \le 1 \bigr\}.
\]
More generally, for a normed space $(X,\|\cdot\|_X)$, we define
\[
B_X := \bigl\{ f \in X\colon \|f\|_X \le 1 \bigr\}.
\]
We also denote the Euclidean norm of $x \in \mathbb{R}^m$  as  $|x|$.
Unless otherwise stated, the symbols $C, c, C_1, c_1, \ldots$ denote positive universal constants.

\vskip .1in

{\bf Organization of the paper.} The remainder of the paper is organized as follows.

In Section 2, we discuss the close connections between the Marcinkiewicz discretization problem and several related areas, including embeddings of finite-dimensional subspaces, frames, spectral properties, and operator norms of submatrices of a given matrix.

In Section 3, we survey recent results on Marcinkiewicz sampling discretization for $L_2$-norms, which is the most extensively studied case.


In Section 4, we present one of the general technical approaches to the Marcinkiewicz sampling discretization of $L_p$-norms for all $1\leq p<\infty$.
It is stated in the form of the  conditional sampling discretization theorem, which highlights the connection between the Marcinkiewicz discretization problem and entropy numbers of compact subsets of bounded functions under the uniform metric. We also outline the key ideas used in the proofs.

In Section 5, we discuss universal discretization,  and its connection with the Restricted Isometry Property, a fundamental concept in compressed sensing.

Section 6 covers recent improvements in bounds for sampling discretization of integral norms, along with the main ideas behind their proofs.

Finally, in Section 7, we briefly discuss some recent progress on sampling discretization in the uniform norm.


\section{Connections with other areas}

The Marcinkiewicz  discretization  problem is closely connected to several other areas.  In this section, we provide a brief overview of these interactions and reformulate the problem in different contexts. We refer to the survey paper  \cite{KKLT} for a detailed discussion  of these connections.

First,
there are deep results on embeddings of finite-dimensional subspaces of $L_p(\Omega, \mu)$ into $\ell_p^N$, which are closely related to the sampling discretization of the $L_p$ norms (see, for instance, \cite{BLM,JS}). Techniques developed for these  embedding problems  have proven highly effective for  sampling  discretization (see \cite{JS}). A detailed  discussion of this connection is available  in Section 4.2 of \cite{KKLT}.

 Second, there is  a close connection between  the Marcinkiewicz  discretization of $L_p$-norms and the concept of
$(\Ld,p)$-frames (see Section 5 of \cite{DKT}).  We  recall the following definition from \cite[Definition 5.2]{DKT}.
\begin{Definition}[{\cite{DKT}}]\label{dfD2}
Let $\La:=\{\la_j\}_{j=1}^m\subset \RR$ and $1\le p<\infty$. A sequence $\{\psi_j\}_{j=1}^m$  of functions in $X_N$    is called  a $(\Lambda, p)$-frame
of $X_N$ with positive constants $A$ and $B$ if for any $f\in X_N$ we have
$$
A\|f\|_{L_p(\Omega,\mu)}^p \le \sum_{j=1}^m \la_j |\<f,\psi_j\>|^p \le B\|f\|_{L_p(\Omega,\mu)}^p,
$$
where	$\langle\cdot,\cdot\rangle$ denotes the inner product of $L_2(\Omega,\mu)$. When $\lambda_1=\ldots=\lambda_m=1$,   a $(\Lambda, p)$-frame is  called a $p$-frame (see \cite{AFR}).
\end{Definition}

Let $\CD(  X_N, \cdot,\cdot)$ denote the reproducing kernel (Dirichlet kernel) of the space $X_N\subset L_2(\Omega, \mu)\cap L_p(\Omega, \mu)$; namely,
\[ \CD(  X_N, x,y):=\sum_{j=1}^N u_j(x) {u_j(y)},\  \ x, y\in\Omega\]
where   $\{u_j\}_{j=1}^N$  is an  orthonormal basis (o.n.b.) of $(X_N,\|\cdot\|_{L_2(\Og, \mu)})$. Note that $\CD(  X_N, \cdot,\cdot)$ is independent of the choice of the orthonormal basis.
The Marcinkiewicz inequality \eqref{1-2} is equivalent to
\[
C_1\|f\|_{L_p(\Omega,\mu)}^p \le \sum_{j=1}^m \lambda_j |\<f,\CD(  X_N, \xi^j,\cdot)\>|^p \le C_2\|f\|_{L_p(\Omega,\mu)}^p,\   \ \forall f\in X_N.
\]
{In other words, in terms  of $(\Ld,p)$-frames, $X_N$ admits a weighted Marcinkiewicz-type discretization theorem of the form \eqref{1-2}
with constants $C_1, C_2>0$
if and only if}
there exist points $\xi^1,\ldots, \xi^m\in\Og$ and weights $\Lambda=\{\lambda_j\}_{j=1}^m\subset \RR$ such that
the sequence $\{\CD(X_N, \xi^j,\cdot)\}_{j=1}^m$ forms a $(\Lambda, p)$-frame of $X_N$ with constants $C_1, C_2>0$.

Third, there is a  probabilistic  approach to  the Marcinkiewicz discretization problem with $\va\in (0, 1)$. It aims to establish  that for IID random points $\xi^1,\ldots, \xi^m$ drawn from $\mu$ on $\Og$, and $1\leq p<\infty$, the inequality
\beq \label{1-8a} \sup_{f\in X_N^p}  \Bl|\frac 1m\sum_{j=1}^m |f(\xi^j)|^p-\int_{\Og} |f|^p \, d\mu \Br|<\va\enq
holds with high probability, which, in particular, implies  that
{$X_N$ admits a Marcinkiewicz-type discretization theorem of the form \eqref{1-4}.}
This approach connects  to moment estimates of random vectors in probability. Let $\xi$  be  a random point drawn from $\mu$.  Given a basis $\{u_k\}_{k=1}^N$ of $X_N$, consider the vector-valued function  $\mathbf {u} =(u_1,\ldots, u_N)$ on $\Og$, and the random vector  $\pmb \eta=\mathbf {u}(\xi)$. Let  $\pmb\eta^j=\mathbf {u}(\xi^j)$, $j=1, \ldots, m$ be independent copies of $\pmb\eta$.   Since each function $f\in X_N$ has a unique representation
 $ f = \< \mathbf {y}, \mathbf{u}\>$ with  $\mathbf{y}=(y_1,\ldots, y_N)\in \RR^N$, the inequality \eqref{1-8a} above  is equivalent to
 \begin{equation} \label{1-9a} \sup_{\mathbf y \in K} \Bl|\frac 1m \sum_{j=1}^m |\<\mathbf y,  \pmb\eta^j\>|^p -\EE |\< \by,  \pmb\eta\>|^p \Br|< \va,\end{equation}
   where  $K:=\{\by \in\RR^N:\      \EE|\< \by, \mathbf{u}\>|^p\leq 1    \}$ is a symmetric convex set in $\RR^N$ and
    $\< \mathbf{x},  \mathbf y\>$ denotes the dot product $\sum_{j=1}^N x_j y_j$ of $\mathbf x,\mathbf y\in\RR^N$.
 Thus,  the  probabilistic  approach   to  the Marcinkiewicz discretization problem asks:  How many independent copies of a  random vector $\pmb\eta$ are needed to ensure that \eqref{1-9a} holds with high probability?

Fourth, the Marcinkiewicz discretization in finite-dimensional spaces relates  to the spectral properties and operator norms of submatrices of a given matrix. It can be reduced to a corresponding matrix problem.   As above, we fix a basis $\{u_k\}_{k=1}^N$ of $X_N$, and define the vector-valued function  $\mathbf{u} =(u_1,\ldots, u_N)$ on $\Og$.
  Given  a set $\pmb\xi=\{\xi^1,\xi^2,\ldots, \xi^m\}$ of distinct points in $\Og$, consider the $m\times N$  matrix
\[\pmb\Phi(\pmb\xi):= \begin{bmatrix}
	\mathbf u(\xi^1)\\
		\mathbf u(\xi^2)\\
	\vdots\\
		\mathbf u(\xi^m)			
\end{bmatrix}=\begin{bmatrix}
	u_1(\xi^1) &u_2(\xi^1) &\cdots& u_N(\xi^1) \\
	u_1(\xi^2) &u_2(\xi^2) &\cdots& u_N(\xi^2) \\
	\vdots&\vdots&\vdots& \vdots\\
	u_1(\xi^m) &u_2(\xi^m) &\cdots& u_N(\xi^m)
\end{bmatrix}.\]
Given $1\leq p\leq \infty$, define  the following norm on $\RR^N$:
$$
\| \mathbf y\|_{L_p} =\|\<\mathbf y,  \mathbf u\>\|_{L_p(\Og, \mu)},\   \ \by=(y_1,\cdots, y_N)\in\RR^N.
$$
The Marcinkiewicz inequality \eqref{1-3} can  then  be reformulated as
\[
	C_1    \| \mathbf y\|_{L_p}   \leq \bigl\| \pmb\Phi(\pmb\xi) \mathbf y\bigr\|_{L_p^m}\leq  C_2 \| \mathbf y\|_{L_p},\   \  \forall \mathbf y\in\RR^N.
\]
If  $p=2$ and  $\{u_1,\ldots,u_N\}$ is an  orthonormal basis of $(X_N,\|\cdot\|_{L_2(\Og, \mu)} ) $, this is equivalent to
\[
C_1    \| \mathbf y\|_{\ell_2^m} \leq \bigl\| \pmb\Phi(\pmb \xi) \mathbf y\bigr\|_{L_2^m}\leq  C_2    \| \mathbf y\|_{\ell_2^m},\   \  \forall \mathbf y\in\RR^N,
\]
{
which can be reformulated in terms of the spectrum of the sampling matrix $\pmb\Phi(\boldsymbol{\xi})$. Specifically, this translates into the following estimates for the maximum and minimum singular values:}
\[
\sa_{\max} (\pmb\Phi(\pmb \xi)) \leq C_2\sqrt{m}\    \ \text{and}\    \ \sa_{\min} (\pmb\Phi(\pmb \xi))\ge C_1 \sqrt{m}.
\]

If  $\Og=\Omega_M=\{ x_1,\ldots, x_M\}$ is a finite set, then  the vector-valued function $\mathbf u$ can be represented as a matrix with $M$ rows and $N$ columns:
 \[\pmb\Phi:= \begin{bmatrix}
	\mathbf u(x_1)\\
	\mathbf u(x_2)\\
	\vdots\\
	\mathbf u(x_M)			
\end{bmatrix}=\begin{bmatrix}
	u_1(x_1) &u_2(x_1) &\cdots& u_N(x_1) \\
	u_1(x_2) &u_2(x_2) &\cdots& u_N(x_2) \\
	\vdots&\vdots&\vdots& \vdots\\
	u_1(x_M) &u_2(x_M) &\cdots& u_N(x_M)
\end{bmatrix}.\]
In this case, we are looking for an $m\times N$ submatrix $\pmb\Phi(\pmb \xi) $ of the full matrix $\pmb\Phi$ such that for each $\mathbf{y} \in \RR^N$,
\[C_1 \|\pmb\Phi \mathbf y\|_{L_p(\Omega_M)}^p\leq \|\pmb\Phi(\pmb \xi) \mathbf y\|_{L_p^m}^p \leq C_2 \|\pmb\Phi \mathbf y\|_{L_p(\Omega_M)}^p.\]		

Finally, the universal Marcinkiewicz-type discretization problem can alternatively be reformulated in terms of certain properties of  matrices. In particular,  the universal discretization of the  $L_2$ norm is closely linked with the concept of the Restricted Isometry Property (RIP), which plays significant roles in compressed sensing. A detailed discussion of this connection will be presented in Section \ref{sec:7} of this paper.

\section{Marcinkiewicz discretization of  $L_2$ norms}\label{subsection General subspaces 2}

%
%
%
%
%
%
%

In this section, we  survey some known results on  the   Marcinkiewicz sampling discretization of the  $L_2$-norms, which is the most studied case.
Let $X_N$ be an $N$-dimensional subspace  of $L_2(\Omega,\mu)$. For simplicity, we assume all functions in $X_N$ are real valued.
Our goal is  to discretize  the $L_2$-norm in $X_N$ via  a finite discrete sum:
	\begin{equation}\label{4-1:2024}
	(1-\epsilon)	\int_{\Omega} |f|^2\, d\mu\leq \sum_{j=1}^m \lambda_j |f(\xi^j)|^2\leq (1+\epsilon)\int_{\Omega} |f|^2\, d\mu,\   \ \forall f\in X_N,
	\end{equation}
where $\xi^j\in\Omega$, $\lambda_j\ge 0$, $j=1,2,\ldots, m$, and $\epsilon\in (0, 1)$.

\subsection{Characterization  of (\ref{4-1:2024}) via spectral norms}

The discretization \eqref{4-1:2024} of the  $L_2$-norms can be equivalently formulated using spectral norms of certain symmetric matrices. To this end, we first recall  some useful results   from matrix analysis.

Let $A$ be  an $N\times N$ real symmetric matrix.
By the spectral theorem, one can diagonalize $A$ using a sequence of $N$ real eigenvalues,
$$\lambda_1(A)\ge \lambda_2(A)\ge \ldots\ge \lambda_N(A),$$
 and an orthonormal basis of eigenvectors $u_1(A), \ldots, u_N(A) \in \mathbb{R}^N$ such that  $$A \big( u_i(A)\big)=\lambda_i(A)u_i(A),\     \    \   i=1,2,\ldots, N.$$
 Let $\lambda_{\max}(A):=\lambda_1(A)$ and $\lambda_{\min}(A):=\lambda_N(A)$.
Then we have
\begin{equation} \label{5-3-1-0}\lambda_{\max}(A)=\max_{x\in\SS^{N-1}} \langle Ax, x\rangle \   \   \ \text{and}\   \    \lambda_{\min}(A)=\min_{x\in\SS^{N-1}} \langle Ax, x\rangle,\end{equation}
where $\<\cdot,\cdot\>$ denotes the dot product in $\RR^N$.
In general, the Courant-Fischer min-max theorem asserts that
 for each  $1\leq k\leq N$,
	\begin{equation*} \label{5-3-1}\lambda_k(A)= \max_{\dim V =k}\   \  \min_{x\in V, |x|=1} \langle Ax, x\rangle\end{equation*}
	and
	\begin{equation*} \label{5-3-2}\lambda_k(A)=\min_{\dim V=N-k+1}  \   \  \max_{x\in V,  |x|=1}  \langle Ax, x\rangle,\end{equation*}
	where $V$ ranges over all subspaces of  $\RR^N$ with the indicated dimension. If $A$ is positive definite,  $\ld_{\max}(A)$  is    the spectral norm of $A$, which coincides with the operator norm $\|A\|$ of $A$:
$$\|A\|:=\max_{x\in \SS^{N-1}}|Ax|.$$
For a general symmetric matrix, we have
$$
\|A\|= \max_{x\in\SS^{N-1}} |\langle Ax, x\rangle| = \max\{|\ld_{\max}(A)|, |\ld_{\min}(A)|\}.
$$

Now we give an alternative formulation of \eqref{4-1:2024} using spectral norms.
Let $\{u_1,\ldots, u_N\}$ be an orthonormal basis of $X_N\subset L_2(\Og,\mu)$, and consider the vector-valued function  $$\mathbf{u}(\xi): =(u_1(\xi),\ldots, u_N(\xi)),\  \    \xi\in \Omega.$$
Define
\[ \mathbf{G}(\xi): =\mathbf{u}(\xi)\otimes \mathbf{u}(\xi) =\Bl[ u_i(\xi) u_j(\xi)\Br]_{1\leq i, j\leq N} \in\RR^{N\times N},\  \ \xi\in\Omega.\]
Then  for each { $f=\sum_{j=1}^N a_j u_j= \< \mathbf{u},  \mathbf{a}\>\in X_N$} with $\mathbf{a}=(a_1,\ldots, a_N)^T\in\RR^N$, we have
\[ \sum_{j=1}^m \lambda_j |f(\xi^j)|^2 -\|f\|_{L^2(\mu)}^2 =\mathbf{a}^T \Bl( \sum_{j=1}^m \lambda_j \mathbf{G}(\xi^j)-I\Br) \mathbf{a}.\]
Here we treat $\mathbf{a}$ as a column-vector and $\mathbf{u}(x)$ as a row-vector.
This together with \eqref{5-3-1-0} yields the following  alternative formulation of \eqref{4-1:2024}.
\begin{Lemma}\label{Lema-4-1}
Let $\xi^1, \ldots, \xi^m\in \Omega$,  $\lambda_1,\ldots, \lambda_m\in \RR$ and $\epsilon\in (0, 1)$. Then
$X_N$ admits a Marcinkiewicz-type discretization theorem of the form
\eqref{4-1:2024}
if and only if
\begin{equation*}
\Bl\|I -\sum_{j=1}^m \lambda_j \mathbf{G}(\xi^j)\Br\|\leq \epsilon,
\end{equation*}
where $I$ denotes the $N\times N$ identity matrix, and $\|\cdot\|$ denotes the spectral norm.
 \end{Lemma}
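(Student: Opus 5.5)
The plan is to turn both sides of the equivalence into statements about the quadratic form of one symmetric matrix, and then apply the variational characterization \eqref{5-3-1-0} of the extreme eigenvalues. Set
\[
A:=\sum_{j=1}^m \lambda_j \mathbf{G}(\xi^j)-I .
\]
Since each $\mathbf{G}(\xi)=\mathbf{u}(\xi)^T\mathbf{u}(\xi)$ is a real symmetric $N\times N$ matrix, $A$ is real symmetric. This holds for arbitrary real weights $\lambda_j$, so no positivity of the weights is needed.

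\textbf{Step 1: translate \eqref{4-1:2024} into a bound on $\mathbf a^TA\mathbf a$.} Because $\{u_1,\ldots,u_N\}$ is an orthonormal basis, every $f\in X_N$ can be written uniquely as $f=\<\mathbf u,\mathbf a\>$ with $\mathbf a\in\RR^N$, and $\|f\|_{L_2(\Omega,\mu)}^2=|\mathbf a|^2$. The identity displayed just before the lemma then gives
\[
\sum_{j=1}^m\lambda_j|f(\xi^j)|^2-\|f\|_{L_2(\Omega,\mu)}^2=\mathbf a^TA\mathbf a .
\]
Subtracting $\|f\|^2_{L_2(\Omega,\mu)}$ throughout \eqref{4-1:2024} shows that the two-sided inequality holds for all $f\in X_N$ if and only if
\[
|\mathbf a^TA\mathbf a|\le\epsilon|\mathbf a|^2 \quad\text{for all }\mathbf a\in\RR^N .
\]
By homogeneity, this is the same as requiring $|\<A\mathbf x,\mathbf x\>|\le\epsilon$ for all $\mathbf x\in\SS^{N-1}$.

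\textbf{Step 2: identify the supremum with the spectral norm.} By \eqref{5-3-1-0}, the quantity $\sup_{\mathbf x\in\SS^{N-1}}|\<A\mathbf x,\mathbf x\>|$ equals $\max\{|\lambda_{\max}(A)|,|\lambda_{\min}(A)|\}$. For a symmetric matrix this is $\|A\|$, as recorded just before the lemma. To verify that fact, expand $\mathbf x$ in an orthonormal eigenbasis of $A$: then $|A\mathbf x|^2=\sum_i\lambda_i(A)^2x_i^2\le\max_i\lambda_i(A)^2$, and equality is attained at an eigenvector of the eigenvalue of largest modulus. Since $\|I-\sum_j\lambda_j\mathbf G(\xi^j)\|=\|{-A}\|=\|A\|$, Step 1 becomes exactly the condition $\|I-\sum_j\lambda_j\mathbf G(\xi^j)\|\le\epsilon$, which proves both directions at once.

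There is no real obstacle here. The only point needing care is the bookkeeping with the row vector $\mathbf u$ and the column vector $\mathbf a$, to confirm that $f(\xi)^2=\mathbf a^T\mathbf G(\xi)\mathbf a$.
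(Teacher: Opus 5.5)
Your proposal is correct and follows the paper's own argument. The paper likewise writes $\sum_{j}\lambda_j|f(\xi^j)|^2-\|f\|_{L_2(\Omega,\mu)}^2=\mathbf a^T\bigl(\sum_{j}\lambda_j\mathbf G(\xi^j)-I\bigr)\mathbf a$ and combines this identity with \eqref{5-3-1-0} and the fact that $\|A\|=\max_{x\in\SS^{N-1}}|\langle Ax,x\rangle|$ for symmetric $A$; your remarks that no sign condition on the weights is needed, and your eigenbasis check of the norm identity, just make explicit what the paper leaves implicit.
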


Lemma \ref{Lema-4-1} establishes  a  connection between  the $L_2$-discretization   \eqref{4-1:2024}  and  the following  nonlinear $m$-term approximation problem:
\[ \sigma_m (I, \D)=\inf_{B\in \Sigma_m(\D)} \|I-B\|,\]
where $\D:=\{\mathbf{G}(\xi)\colon \xi\in\Omega\}$ and $\Sigma_m(\D)$ denotes the set of all $N\times N$ matrices of the form
\[ B=\sum_{j=1}^m \lambda_j \mathbf{G}(\xi^j),\   \ \lambda_1,\ldots, \lambda_m\in\RR,\  \ \xi^1,\ldots, \xi^m\in\Og.\]

{
Using this connection, one can apply a relaxed greedy algorithm to construct a set of points
$\{\xi^j\}_{j=1}^m \subset \Omega$, with $m$ of order $N^2$, for which~\eqref{4-1:2024}
holds with uniform weights $\lambda_1=\ldots=\lambda_m=\frac{1}{m}$
(see~\cite[Proposition~5.1]{Te18}).
}

\subsection{Discretization using random samples}
We start with the following result, which was derived in \cite{Te18} using Lemma \ref{T5.3} below.

\begin{Theorem}[{\cite{Te18}}]\label{Thm-4-1} Let $X_N$ be an $N$-dimensional space of bounded functions on $\Omega$ such that $X_N\in \textnormal{NI}_{2, \infty}(\sqrt{KN})$
for some constant $K\ge 1$.
Let  $\xi^1,\ldots, \xi^m$ be a sequence of IID random points drawn from  the probability measure $\mu$ on $\Omega$. If $m\ge 3AKN \epsilon^{-2}\log N$ for some constants $A> 1$ and $\epsilon\in (0, 1)$, then
the inequality
\begin{equation}\label{4-7:2024} (1-\epsilon)\|f\|_{L_2(\Omega, \mu)}^2 \leq \frac 1m \sum_{j=1}^m|f(\xi^j)|^2 \leq (1+\epsilon) \|f\|_{L_2(\Omega, \mu)}^2\end{equation}
holds for all $f\in X_N$ with probability at least $1-2N^{-A+1}$.

\end{Theorem}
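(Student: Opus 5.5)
By Lemma~\ref{Lema-4-1} with uniform weights $\lambda_j=\frac1m$, the inequality \eqref{4-7:2024} is equivalent to
\[
\Bl\|I-\frac1m\sum_{j=1}^m \mathbf G(\xi^j)\Br\|\le \epsilon ,
\]
where $\mathbf G(\xi)=\mathbf u(\xi)\otimes\mathbf u(\xi)$ and $\{u_k\}$ is an orthonormal basis of $X_N$. So the plan is to prove a concentration bound for a sum of IID random positive semidefinite rank-one matrices. I expect Lemma~\ref{T5.3} to be a matrix Chernoff/Bernstein inequality of Tropp type (or Rudelson's), and I would invoke it for $Y_j:=\mathbf G(\xi^j)$.

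The first step is to check the hypotheses.
\begin{itemize}
\item \emph{Mean.} Since $\{u_k\}$ is orthonormal in $L_2(\Omega,\mu)$, we have $\EE\,\mathbf G(\xi)=\bigl[\int u_iu_j\,d\mu\bigr]_{i,j}=I$.
\item \emph{Uniform bound.} Each $\mathbf G(\xi)$ is PSD of rank one, so $\|\mathbf G(\xi)\|=|\mathbf u(\xi)|^2=\Phi(\xi)$. By \eqref{Nik-equiv}, $X_N\in \textnormal{NI}_{2,\infty}(\sqrt{KN})$ gives $\Phi(\xi)\le KN$ for all $\xi$. Hence $0\preceq Y_j\preceq KN\cdot I$.
\item \emph{Variance (if needed).} If the lemma is of Bernstein type, use $\mathbf G^2=\Phi\,\mathbf G\preceq KN\,\mathbf G$. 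This gives $\EE (Y_j-I)^2\preceq \EE Y_j^2\preceq KN\, I$, so the total variance is at most $mKN$.
\end{itemize}

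The second step is to apply the matrix Chernoff bound. The eigenvalues of $\sum_j Y_j$, whose mean is $mI$ so that $\mu_{\min}=\mu_{\max}=m$, satisfy
\[
\PP\Bl(\lambda_{\min}\Bl(\sum_j Y_j\Br)\le (1-\epsilon)m\Br)\le N\Bl(\frac{e^{-\epsilon}}{(1-\epsilon)^{1-\epsilon}}\Br)^{m/(KN)}\le N e^{-\epsilon^2 m/(2KN)} .
\]
The analogous upper tail bound is
\[
N\Bl(\frac{e^{\epsilon}}{(1+\epsilon)^{1+\epsilon}}\Br)^{m/(KN)}\le N e^{-\epsilon^2 m/(3KN)},
\]
valid for $\epsilon\in(0,1)$. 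With $m\ge 3AKN\epsilon^{-2}\log N$ each tail is at most $N\cdot N^{-A}=N^{-A+1}$. A union bound over the two events gives failure probability at most $2N^{-A+1}$, and on the complement $\|I-\frac1m\sum_j\mathbf G(\xi^j)\|\le\epsilon$. Lemma~\ref{Lema-4-1} then yields \eqref{4-7:2024}.

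The main obstacle is matching constants: the factor $3$ in $3AKN\epsilon^{-2}\log N$ must come out of whatever form Lemma~\ref{T5.3} takes. With Tropp's Chernoff bounds, the elementary inequalities $(1+\epsilon)\log(1+\epsilon)-\epsilon\ge \epsilon^2/3$ and $(1-\epsilon)\log(1-\epsilon)+\epsilon\ge\epsilon^2/2$ on $(0,1)$ give exactly the exponent $\epsilon^2 m/(3KN)$ needed. If instead Lemma~\ref{T5.3} is a Bernstein-type inequality of the form
\[
\PP\Bl(\Bl\|\sum_j (Y_j-I)\Br\|\ge t\Br)\le 2N\exp\Bl(-\frac{t^2/2}{\sigma^2+Rt/3}\Br),
\]
I would take $t=m\epsilon$, $\sigma^2=mKN$, and $R=KN$ (valid since $\|Y_j-I\|\le \max(KN,1)-1\le KN$). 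The exponent is then $\frac{m\epsilon^2}{2KN(1+\epsilon/3)}\ge \frac{3m\epsilon^2}{8KN}$, which is at least $\frac{m\epsilon^2}{3KN}$, so the same choice of $m$ works and gives the stated probability $1-2N^{-A+1}$.
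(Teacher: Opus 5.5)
Your proposal is correct and is essentially the paper's proof: Tropp's matrix Chernoff bound (Lemma~\ref{T5.3}) is applied to the independent rank-one PSD matrices $\frac1m\mathbf u(\xi^j)^T\mathbf u(\xi^j)$, which have mean summing to $I$ and norm at most $KN/m$ (your unnormalized $Y_j$, with bound $KN$ and mean $mI$, give the same ratio $m/(KN)$), followed by a union bound over the two tails and Lemma~\ref{Lema-4-1}. One small point in your favour: you apply $(1+\epsilon)\log(1+\epsilon)-\epsilon\ge \epsilon^2/3$ directly to Tropp's upper-tail bound, which gives the exponent $m\epsilon^2/(3KN)$ for all $\epsilon\in(0,1)$, whereas the paper goes through the weaker form \eqref{3-5a}, whose exponent $\frac{\epsilon^2}{2(1+\epsilon)}\cdot\frac{m}{KN}$ dominates $\frac{m\epsilon^2}{3KN}$ only when $\epsilon\le 1/2$.
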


{
\begin{Remark} By setting $A=\frac {m\va^2}{3KN\log N}$, we may reformulate the conclusion of Theorem \ref{Thm-4-1}  as follows. If $m\ge 3KN \epsilon^{-2}\log N$, then  the estimate \eqref{4-7:2024}
holds  for all $f\in X_N$ with probability
at least
$$
1-2N\exp\Bigl(-\frac{m\varepsilon^2}{3KN}\Bigr).
$$
\end{Remark}
}

{We note that  results of this type can be traced back to Rudelson
\cite{Rud1}, whose work  implies a similar discretization bound for  random  points but yields a slightly weaker probability estimate.}

 The proof of Theorem \ref{Thm-4-1}  relies on the following result of  Tropp \cite{Tr}.
\begin{Lemma}[{\cite{Tr}}]\label{T5.3}
Let $\{T_k\}_{k=1}^m$ be a sequence of independent $N \times N$ positive semi-definite random matrices, and define $T = \sum_{k=1}^m T_k$. Assume that$$\max_{1\leq k\leq m}\|T_k\| \le M < \infty \quad \text{almost surely}$$for some constant $M > 0$, where $\|\cdot\|$ denotes the spectral norm.
Let
	$
	\lambda_{\min} := \lambda_{\min}(\EE T)$ and
	$\lambda_{\max} := \lambda_{\max}(\EE T)$.
	Then
	\begin{align*}
	 &\PP\Bl[ \lambda_{\min}(T) \le (1-\epsilon)\lambda_{\min}\Br] \le
	N\left(\frac{1}{e^{\epsilon}(1-\epsilon)^{1-\epsilon}}\right)^{\lambda_{\min}/M},\  \   \ \forall \epsilon\in [0, 1),\\
	&\PP\Bl[\lambda_{\max}(T)  \ge (1+\epsilon)\lambda_{\max}\Br] \le
	N\left(\frac{e^{\epsilon}}{(1+\epsilon)^{1+\epsilon}}\right)^{\lambda_{\max}/M},\   \ \forall \epsilon>0.
	\end{align*}
\end{Lemma}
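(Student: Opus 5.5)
The plan is to use the matrix Laplace transform method combined with Lieb's concavity theorem, which is the standard route to matrix Chernoff bounds. First I will reduce to $M=1$: replacing $T_k$ by $T_k/M$ rescales $T$, $\EE T$, $\lambda_{\min}$, $\lambda_{\max}$ by $1/M$. The exponents $\lambda_{\min}/M$ and $\lambda_{\max}/M$ then become $\lambda_{\min}$ and $\lambda_{\max}$. So assume $0\preccurlyeq T_k\preccurlyeq I$ almost surely.

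\textbf{Step 1: Laplace transform bound.} For $\theta>0$, the spectral mapping theorem and the fact that $\exp(\theta T)$ is positive definite give
\[
\PP\bigl[\lambda_{\max}(T)\ge t\bigr]\le e^{-\theta t}\,\EE\, \mathrm{tr}\, \exp(\theta T).
\]
Indeed, $e^{\theta\lambda_{\max}(T)}=\lambda_{\max}(e^{\theta T})\le \mathrm{tr}\, e^{\theta T}$, and we apply Markov's inequality.

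\textbf{Step 2: subadditivity of the matrix cumulant generating function.} I will use Lieb's theorem: for fixed symmetric $H$, the map $A\mapsto \mathrm{tr}\exp(H+\log A)$ is concave on positive definite matrices. Condition on $T_1,\dots,T_{m-1}$ and apply Jensen's inequality to the last summand with $A=e^{\theta T_m}$. Iterating over $k=m,m-1,\dots,1$ gives
\[
\EE\,\mathrm{tr}\exp\Bigl(\sum_k \theta T_k\Bigr)\le \mathrm{tr}\exp\Bigl(\sum_k \log \EE e^{\theta T_k}\Bigr).
\]
This is the main obstacle: scalar independence arguments fail because matrices do not commute, and Lieb's concavity is what replaces them. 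I will take Lieb's theorem as a known result.

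\textbf{Step 3: bounding each factor.} Since $x\mapsto e^{\theta x}$ is convex on $[0,1]$, we have $e^{\theta x}\le 1+(e^\theta-1)x$ there. By the spectral theorem this gives $e^{\theta T_k}\preccurlyeq I+(e^\theta-1)T_k$. Taking expectations and using $1+y\le e^y$,
\[
\EE e^{\theta T_k}\preccurlyeq \exp\bigl((e^\theta-1)\EE T_k\bigr).
\]
Because $\log$ is operator monotone, $\log\EE e^{\theta T_k}\preccurlyeq (e^\theta-1)\EE T_k$. Because $\mathrm{tr}\exp$ is monotone with respect to $\preccurlyeq$, the bound from Step 2 is at most
\[
\mathrm{tr}\exp\bigl((e^\theta-1)\EE T\bigr)\le N e^{(e^\theta-1)\lambda_{\max}}.
\]

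\textbf{Step 4: choice of parameters, upper tail.} Take $t=(1+\epsilon)\lambda_{\max}$ and $\theta=\log(1+\epsilon)$. This yields exactly $N\bigl(e^{\epsilon}/(1+\epsilon)^{1+\epsilon}\bigr)^{\lambda_{\max}}$.

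\textbf{Step 5: lower tail.} Write $\PP[\lambda_{\min}(T)\le t]=\PP[\lambda_{\max}(-T)\ge -t]$. For $\theta>0$, Step 1 gives the bound $e^{\theta t}\,\EE\,\mathrm{tr}\, e^{-\theta T}$. Step 2 applies verbatim with $-\theta$ in place of $\theta$. For Step 3, convexity now gives $e^{-\theta x}\le 1-(1-e^{-\theta})x$ on $[0,1]$. This leads to
\[
\EE\,\mathrm{tr}\, e^{-\theta T}\le \mathrm{tr}\exp\bigl(-(1-e^{-\theta})\EE T\bigr)\le N e^{-(1-e^{-\theta})\lambda_{\min}}.
\]
Finally take $t=(1-\epsilon)\lambda_{\min}$ and $\theta=-\log(1-\epsilon)$, which gives $N\bigl(e^{-\epsilon}(1-\epsilon)^{-(1-\epsilon)}\bigr)^{\lambda_{\min}}$, as claimed.
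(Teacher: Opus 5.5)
Your proof is correct and is exactly Tropp's argument for the matrix Chernoff bound: the matrix Laplace transform bound, Lieb's concavity theorem giving subadditivity of the matrix cumulant generating function, the chord bounds $e^{\theta x}\le 1+(e^{\theta}-1)x$ and $e^{-\theta x}\le 1-(1-e^{-\theta})x$ on $[0,1]$ transferred to $T_k$, and the parameter choices $\theta=\log(1+\epsilon)$ and $\theta=-\log(1-\epsilon)$. The paper gives no proof of this lemma and simply quotes it from \cite{Tr}, so your route coincides with the cited source; note that the endpoint $\epsilon=0$ (where $\theta=0$) is trivial, since the bound there equals $N\ge 1$.
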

\begin{Remark} { Let $\vi(t):=(1-t) \log (1-t)$ for $t\in [0, 1)$. Since
\[ \vi'''(t)=\f 1{(1-t)^2}>0,\  \  \forall t\in [0,1),\]
it follows by Taylor's theorem that
\[ \vi(t)\ge \vi(0) +\vi'(0) t +\f12 \vi''(0) t^2 =-t+\f {t^2} 2,\  \ \forall t\in [0, 1),\]
which implies
\[ (1-t)^{1-t}\ge \exp\Bl(-t+\f {t^2}2\Br),\   \ \forall t\in [0, 1].\]
Similarly, using Taylor's theorem, one has
\[ (1+t)\log(1+t) \ge t+\frac 12 \f{t^2} {1+t},\  \ \forall t\geq 0, \]
implying
\[ (1+t)^{1+t} \ge \exp\Bl( t+\frac 12 \f{t^2} {1+t}\Br),\  \ \forall t\geq 0.\]
Substituting these estimates into Lemma \ref{T5.3} yields
\begin{align}
	 &\PP\Bl[ \lambda_{\min}(T) \le (1-\epsilon)\lambda_{\min}\Br] \le N\exp\Bl( -\f {\va^2} 2 \f {\lambda_{\min}} M\Br),\  \   \ \forall \epsilon\in [0, 1),\label{3-4a}\\
	&\PP\Bl[\lambda_{\max}(T)  \ge (1+\epsilon)\lambda_{\max}\Br] \le N\exp\Bl( -\f{\va^2} {2(1+\va)} \cdot \f {\lambda_{\max}}M  \Br),\   \ \forall \epsilon>0. \label{3-5a}
	\end{align} }

\end{Remark}

For completeness, we provide  a proof of Theorem \ref{Thm-4-1} as follows.

\begin{proof}[Proof of Theorem \ref{Thm-4-1}] Let $\{u_1,\ldots, u_N\}$ be an orthonormal basis of $X_N$, and let
\[
\mathbf{u}(x):=(u_1(x),\ldots, u_N(x)) \text{ for } x\in \Omega.
\]
The condition 
$X_N\in \textnormal{NI}_{2, \infty}(\sqrt{KN})$
implies (see \eqref{Nik-equiv})
\[|\mathbf{u}(x)|^2 =\sum_{j=1}^N |u_j(x)|^2\leq KN,\   \ \forall x\in\Omega.
\]
Now consider  the  $m\times N$  random matrix
\[ \pmb \Phi{u}=\begin{bmatrix}
       \mathbf{u}(\xi^1)\\
              \mathbf{u}(\xi^2) \\ \vdots \\
                       \mathbf{u}(\xi^m)
\end{bmatrix}.\]
Let $T:=\frac 1m \pmb \Phi^T\pmb\Phi$. Then
\[ T=\frac 1 {m} \sum_{j=1}^m \mathbf{u}(\xi^j)^T \mathbf{u}(\xi^j)=\sum_{j=1}^m  T_j\   \  \ \text{with}\  \ T_j=\frac 1 {m} \mathbf{u}(\xi^j)^T \mathbf{u}(\xi^j).\] Clearly, the  $T_j$ are independent,  positive semi-definite random matrices  satisfying that
\[ \|T_j\|=\max_{x\in \SS^{N-1}} x^T T_j x =\frac 1m|\mathbf{u}(\xi^j)|^2 \leq \frac{KN}m.\]
Moreover, since $\{u_j\}_{j=1}^N$ is an orthonormal basis of $X_N$, we have
\[\EE T =\frac 1m \sum_{j=1}^m\EE \Bl[ \mathbf{u}(\xi^j)^T \mathbf{u}(\xi^j)\Br]=\EE \Bl[ \mathbf{u}(\xi^1)^T \mathbf{u}(\xi^1)\Br] =I_N,\]
where $I_N$ denotes the $N\times N$ identity matrix. Now applying  Lemma  \ref{T5.3}, \eqref{3-4a} and \eqref{3-5a}  with $M=\frac{KN}m$ and $\ld_{\max}=\ld_{\min}=1$ yields  that for any $\epsilon\in (0, 1/2)$,
\begin{align*}
&\PP\Bl[ 1-\epsilon\leq \lambda_{\min} (T)\leq \lambda_{\max} (T) \le 1+\epsilon\Br]\\
&
\ge 1- \PP\Bl[ \lambda_{\min} (T)\le 1-\epsilon\Br]
-\PP\Bl[\lambda_{\max} (T) \ge 1+\epsilon\Br]\\
&\ge 1- N\Bigg[ \exp\Bl( -\f {\va^2} 2 \f {m} {KN}\Br)+  \exp\Bl( -\f{\va^2} {2(1+\va)} \cdot \f {m}{KN}  \Br)\Bigg]\\
&\ge 1-2N \exp\Bl( -\frac {m\epsilon^2}{3KN}\Br)\ge 1-2N^{-A+1}
\end{align*}
provided that $m\ge 3KAN \epsilon^{-2}\log N$.
Theorem  \ref{Thm-4-1} then follows by Lemma~\ref{Lema-4-1}.
\end{proof}
{\begin{Remark} Clearly, the above proof also yields the following estimate:   for any $\va\ge 1 $ and any $m\ge 1$,
\eq{\PP\Bl[ \lambda_{\max} (T) \le 1+\epsilon\Br]
&\ge 1- N \exp\Bl( -\f{\va^2} {2(1+\va)} \cdot \f {m}{KN}  \Br)\\
&\ge1-N \exp\Bl( -\f \va 4 \f {m}{KN}\Br). }
In particular,  if $1\leq m\leq KN\log N$, then taking
 $\va=\f {4 AKN\log N} m$ for some constant $A>1$, we deduce
\eq{\PP\Bl[ \lambda_{\max} (T) \le 1+  \f {4 A  KN\log N} m\Br]\ge 1- N^{-A+1}.
}
Since for any $\by\in\RR^N$,
\[ \< T\by, \by\>  =\f 1m \sum_{j=1}^m |\< \by, \mathbf{u}(\xi^j)\>|^2 \leq \lambda_{\max}(T)\cdot |y|^2,\]
this implies  the following one-sided $L_2$-Marcinkiewicz inequality.
\end{Remark}
\begin{Corollary}
Let $X_N$ be an $N$-dimensional space of bounded functions on $\Omega$, and assume that $X_N\in \textnormal{NI}_{2, \infty}(\sqrt{KN})$
for some constant $K\ge 1$.
Let  $\xi^1,\ldots, \xi^m$ be a sequence of IID random points drawn from  the probability measure $\mu$ on $\Omega$. If  $1\leq m\leq KN\log N$, then for any given constant $A\ge 1 $,
the one-sided  inequality
\[  \frac 1m \sum_{j=1}^m|f(\xi^j)|^2 \leq \Bl( 1+\f {4 A KN\log N} m\Br) \|f\|_{L_2(\Omega, \mu)}^2\]
holds for all $f\in X_N$ with probability at least $1-N^{-A+1}$.
\end{Corollary}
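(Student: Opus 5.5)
The plan is to run the proof of Theorem~\ref{Thm-4-1} again, keeping only the upper tail and using Tropp's bound \eqref{3-5a} with a large deviation parameter $\va\ge 1$. First fix an orthonormal basis $\{u_k\}_{k=1}^N$ of $X_N$ and write $T=\sum_{j=1}^m T_j$ with $T_j=\frac1m\mathbf u(\xi^j)^T\mathbf u(\xi^j)$. By \eqref{Nik-equiv}, the hypothesis $X_N\in\textnormal{NI}_{2,\infty}(\sqrt{KN})$ gives $|\mathbf u(x)|^2\le KN$ for every $x$. Hence each $T_j$ is positive semi-definite, $\|T_j\|\le KN/m=:M$, and $\EE T=I_N$, so $\lambda_{\max}(\EE T)=1$.

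Next, for $\va\ge1$ we have $\frac{\va^2}{2(1+\va)}\ge\frac{\va}{4}$, because $2\va\ge1+\va$. Substituting this into \eqref{3-5a} gives
\[
\PP\bigl[\lambda_{\max}(T)\ge1+\va\bigr]\le N\exp\Bigl(-\frac{\va}{4}\cdot\frac{m}{KN}\Bigr).
\]
Now choose $\va=\frac{4AKN\log N}{m}$. The assumption $m\le KN\log N$ together with $A\ge1$ guarantees $\va\ge4A\ge1$, so the inequality above applies. With this choice the exponent equals $-A\log N$, and the failure probability is at most $N\cdot N^{-A}=N^{-A+1}$.

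Finally, let $f=\<\mathbf u,\by\>\in X_N$. Then
\[
\frac1m\sum_{j=1}^m|f(\xi^j)|^2=\<T\by,\by\>\le\lambda_{\max}(T)\,|\by|^2=\lambda_{\max}(T)\,\|f\|_{L_2(\Omega,\mu)}^2 .
\]
So on the event $\lambda_{\max}(T)\le1+\frac{4AKN\log N}{m}$ the one-sided inequality holds simultaneously for all $f\in X_N$, and this event has probability at least $1-N^{-A+1}$.

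None of these steps is a real obstacle. The one point that needs care is confirming that the chosen $\va$ is at least $1$, since this is what allows replacing $\frac{\va^2}{2(1+\va)}$ by $\va/4$; the upper bound on $m$ in the hypothesis is there exactly to secure it. As stated, the corollary asks for the bound to be strict only off an event of the given probability, and a boundary event $\lambda_{\max}(T)=1+\va$ does no harm because it is contained in the event $\lambda_{\max}(T)\ge 1+\va$ that Tropp's bound already controls.
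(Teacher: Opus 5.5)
Your proposal is correct and follows essentially the same route as the paper: you rerun the proof of Theorem~\ref{Thm-4-1} keeping only the upper-tail bound \eqref{3-5a}, use $\frac{\va^2}{2(1+\va)}\ge\frac{\va}{4}$ for $\va\ge1$, and choose $\va=\frac{4AKN\log N}{m}$, which is at least $1$ because $m\le KN\log N$. You then pass to the functional inequality via $\<T\by,\by\>\le\lambda_{\max}(T)\,|\by|^2$, exactly as the paper does.
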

}

The Nikol'skii condition $\textnormal{NI}_{2, \infty}(\sqrt{KN})$ plays an important  role in the above proof  of   Theorem~\ref{Thm-4-1}.
However, the constant $K$ in this assumption  can be  very large, in which case  one cannot expect a Marcinkiewicz type theorem with uniform weights and at most $C N \log N$ points with $C>0$ being a universal constant. For instance,  consider the  space $\mathcal{P}_N$ of all algebraic polynomials of degree at most $N-1$ on $[-1, 1]$. It is well-known that $\mathcal{P}_N$  satisfies the $(2, \infty)$-Nikol'skii condition $\textnormal{NI}_{2, \infty}(\sqrt{KN})$ with a least constant $K\ge C N$, and any Marcinkiewicz type inequality  with uniform weights  for the space $\mathcal{P}_N$ requires at least $C N^2$ points (see \cite[Remark~2.1]{Dai}).

Nonetheless, Theorem  \ref{Thm-4-1}  with $K=1$ combined with  a change of density argument  could be used to yield a weighted discretization result for spaces $X_N$ that may not satisfy $\textnormal{NI}_{2, \infty}(\sqrt{KN})$ assumption or satisfy it only with a very large constant $K$. To be more precise,  let $\lambda(X_N, \mu; \cdot)$ denote the Christoffel function on $\Omega$  associated with the space $X_N$ and the measure $\mu$, defined as
\begin{equation} \label{4-8:eq}\lambda(X_N, \mu; x):=\inf_{\substack{ f\in X_N\\
f(x)=1}} \int_{\Omega} |f(\xi)|^2\, d\mu(\xi),\  \   x\in\Omega,\end{equation}
where it is agreed that $\lambda(X_N, \mu; x)=\infty$ if $f(x)=0$ for all $f\in X_N$.
It can be easily seen that $$\int_{\Omega}\frac 1 {\lambda(X_N, \mu; x)} d\mu(x)=N.$$

\begin{Corollary}\label{cor-3-1}
Let $X_N$ be an $N$-dimensional subspace  of $L_2(\Omega,\mu)$ with the associated  Christoffel function $\lambda_N(\cdot)=\lambda(X_N,\mu;\cdot)$ as defined in \eqref{4-8:eq}.
Let   $\xi^1,\ldots, \xi^m$ be a sequence of IID random points drawn from the distribution  $\frac 1{N\lambda_N(x)}\, \mu(dx)$. If  $m\ge 3AN \epsilon^{-2}\log N$ for some constants $A>1$ and $\epsilon\in (0, 1)$, then
the inequalities
$$
(1-\epsilon)\|f\|_{L_2(\Omega, \mu)}^2 \leq \frac Nm\sum_{j=1}^m \lambda_N(\xi^j)\left|f\left(\xi^j\right)\right|^2 \leq(1+\epsilon)\|f\|_{L_2(\Omega, \mu)}^2,  \quad \forall f \in X_N,
$$
 hold simultaneously  with probability at least $1-2N^{-A+1}$.
 \end{Corollary}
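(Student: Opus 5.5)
The plan is a change of density that reduces the statement to Theorem~\ref{Thm-4-1} with $K=1$. Write $\Phi_N(x):=1/\lambda_N(x)=\sum_{j=1}^N u_j(x)^2$, where $\{u_j\}$ is an orthonormal basis of $X_N$ in $L_2(\Omega,\mu)$. Since $\int_\Omega \Phi_N\,d\mu=N$, the measure $d\nu(x):=\frac{\Phi_N(x)}{N}\,d\mu(x)=\frac{1}{N\lambda_N(x)}\,d\mu(x)$ is a probability measure on $\Omega$. Let $\Omega_0:=\{x\colon \Phi_N(x)>0\}$. Then $\nu$ is concentrated on $\Omega_0$, and on $\Omega\setminus\Omega_0$ every $f\in X_N$ vanishes. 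Hence $\int_\Omega |f|^2\,d\mu=\int_{\Omega_0}|f|^2\,d\mu$ for all $f\in X_N$, and we may work on $\Omega_0$ with $\nu$ in place of $\mu$.

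The key step is to define the weighted space
\[
Y_N:=\bigl\{\, g=f\,\bigl(N\lambda_N\bigr)^{1/2}\colon f\in X_N \,\bigr\}
\]
of functions on $\Omega_0$. The map $f\mapsto f (N\lambda_N)^{1/2}$ is linear and injective, since $\lambda_N<\infty$ on $\Omega_0$. It is also an isometry from $L_2(\Omega,\mu)$ into $L_2(\Omega_0,\nu)$, because
\[
\int_{\Omega_0}|g|^2\,d\nu=\int_{\Omega_0}|f|^2 N\lambda_N\cdot\frac{1}{N\lambda_N}\,d\mu=\|f\|_{L_2(\Omega,\mu)}^2 .
\]
So $Y_N$ is $N$-dimensional and $v_j:=u_j(N\lambda_N)^{1/2}$ is an orthonormal basis of $Y_N$ in $L_2(\nu)$. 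We then compute its Christoffel sum:
\[
\sum_{j=1}^N v_j(x)^2=N\lambda_N(x)\,\Phi_N(x)=N \quad\text{for every } x\in\Omega_0 .
\]
By \eqref{Nik-equiv} this gives $Y_N\in \textnormal{NI}_{2,\infty}(\sqrt{N})$ with respect to $\nu$, i.e.\ $K=1$. The functions in $Y_N$ are bounded because $|g(x)|\le \sqrt{N}\|g\|_{L_2(\nu)}$.

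Next we apply Theorem~\ref{Thm-4-1} to $Y_N$ on the probability space $(\Omega_0,\nu)$ with $K=1$. If $m\ge 3AN\epsilon^{-2}\log N$, then with probability at least $1-2N^{-A+1}$ the IID points $\xi^j\sim\nu$ (which lie in $\Omega_0$ almost surely) satisfy
\[
(1-\epsilon)\|g\|^2_{L_2(\nu)}\le \frac1m\sum_{j=1}^m|g(\xi^j)|^2\le (1+\epsilon)\|g\|^2_{L_2(\nu)}\quad\text{for all } g\in Y_N .
\]
Substituting $g=f(N\lambda_N)^{1/2}$ gives $|g(\xi^j)|^2=N\lambda_N(\xi^j)|f(\xi^j)|^2$, and the isometry identity turns this into exactly the claimed inequality.

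The only delicate point is bookkeeping, not analysis. One must handle the zero set of $\Phi_N$, where $\lambda_N=\infty$; it has $\nu$-measure zero and all $f\in X_N$ vanish there, so it is harmless. One must also check measurability of $\lambda_N$, which holds because $\Phi_N$ is a finite sum of squares of measurable functions.
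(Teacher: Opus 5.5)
Your proposal is correct and is essentially the paper's proof: your map $f\mapsto f(N\lambda_N)^{1/2}$ is exactly the paper's $Uf=f/F$ with $F=(\frac1N\sum_{k}u_k^2)^{1/2}$ and $d\nu=F^2\,d\mu$, followed by an application of Theorem~\ref{Thm-4-1} with $K=1$. The only cosmetic difference is that you restrict to $\Omega_0=\{F>0\}$, whereas the paper sets $Uf=0$ off this set.
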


 For completeness, we provide  a proof of  this corollary  as follows.

\begin{proof}
 Let $\left\{u_1, \ldots, u_N\right\}$ be an orthonormal basis of $X_N$. Then
 $$ \frac 1{\lambda_N(x)} = N F(x)^2\   \ \text{with}\  \
F(x):= \Bl(\frac{1}{N} \sum_{k=1}^N\left|u_k(x)\right|^2\Br)^{1/2}.
$$
Consider the probability measure   $d\nu: =F^2  d\mu$ on $\Omega$. Define a  mapping $U: L_2(\Omega, \mu)\to L_2( \Omega,\nu)$   by $Uf(x) =f(x)/F(x)$ if  $F(x)>0$, and $Uf(x)=0$ otherwise.
	Since  $F(x)=0$ implies  $f(x)=0$ for all $f\in X_N$, it follows that
	$$\|Uf\|_{L_2( \Omega, \nu)} =\|f\|_{L_2(\Omega, \mu)},\   \ \forall f\in X_N.$$
Moreover,  it is easily seen that
	$$\sup_{x\in\Omega} |g(x)| \leq \sqrt{N}\|g\|_{L_2(\Omega, \nu)},\   \ \forall g\in UX_N.$$
Corollary \ref{cor-3-1}  then follows by  applying Theorem \ref{Thm-4-1} to the space $U X_N\subset L_2(\Omega, \nu)$ with $K=1$.

\end{proof}

\subsection{The minimum  number of  points required for $L_2$-discretization}\label{subsection-3-3}

{
Given  two constants $C_2\ge 1\ge C_1>0$, we denote by  $m (X_N; 2; C_1, C_2)$  the smallest positive integer $m$ for which  there exist $m$ points $\xi^1,\ldots, \xi^m\in\Omega$  such that
\begin{equation*}
C_1\|f\|_{L_2(\Omega, \mu)}^2 \leq \frac{1}{m} \sum_{j=1}^m |f(\xi^j)|^2 \leq C_2\|f\|_{L_2(\Omega, \mu)}^2,\   \ \forall f\in X_N.
\end{equation*}
Clearly,  $m (X_N; 2; C_1, C_2)\ge N$.
Regarding the upper bound,
Theorem~\ref{Thm-4-1} (originally due to Rudelson~\cite{Rud}) establishes that if $X_N\in \textnormal{NI}_{2, \infty}(\sqrt{KN})$, then
$$
m (X_N; 2; 1/2, 3/2)\leq C KN\log N.
$$
The following remarkable theorem asserts  that
the extra  factor $\log N$ in this bound can be removed.
The proof  relies  on   the breakthrough theorem of A.~Marcus, D.~Spielman, and N.~Srivastava \cite{MSS}
combined with   an iterative procedure  initially proposed  in \cite{BLM, Lu89}.
}

\begin{Theorem} [{\cite{Ko22, LimT}}] \label{thm-4-2}Let $X_N$ be an $N$-dimensional space of bounded functions on $\Omega$ such that $X_N\in \textnormal{NI}_{2, \infty}(\sqrt{KN})$ for some constant $K\ge 1$.  Then for any $\epsilon\in (0,1)$,  there exist  $\xi^1, \ldots, \xi^m \in \Omega$ with $m \leq C \epsilon^{-2} K N$ and $C$ being an absolute constant  such that
\begin{equation}\label{4-10:2024}
(1-\epsilon)\|f\|_{L_2(\Omega, \mu)}^2 \leq \frac{1}{m} \sum_{j=1}^m |f(\xi^j)|^2 \leq (1+\epsilon) K\|f\|_{L_2(\Omega, \mu)}^2,\   \ \forall f\in X_N.
\end{equation}
\end{Theorem}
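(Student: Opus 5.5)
We first reduce the problem to selecting rows of a matrix, then remove rows in halving steps using the Marcus--Spielman--Srivastava (MSS) theorem. The argument has four steps.

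\textbf{Step 1: reduction to a finite frame.} Let $\{u_1,\ldots,u_N\}$ be an orthonormal basis of $X_N$ and $\mathbf u=(u_1,\ldots,u_N)$. By \eqref{Nik-equiv} we have $|\mathbf u(x)|^2\le KN$ for all $x\in\Omega$. Apply Theorem~\ref{Thm-4-1} with $\epsilon=1/2$ (say) and $M_0\asymp KN\log N$ random points. This gives points $x_1,\ldots,x_{M_0}$ with
\[
\tfrac12 I\preceq \tfrac1{M_0}\sum_{i=1}^{M_0}\mathbf u(x_i)^T\mathbf u(x_i)\preceq \tfrac32 I .
\]
Actually we use a small $\epsilon_0$ in place of $1/2$, to be absorbed at the end. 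By Lemma~\ref{Lema-4-1}, it now suffices to find a subset $J\subset\{1,\ldots,M_0\}$ with $|J|=m\le C\epsilon^{-2}KN$ and
\[
(1-\epsilon)I\preceq \tfrac1m\sum_{i\in J}\mathbf u(x_i)^T\mathbf u(x_i)\preceq (1+\epsilon)K I .
\]
The vectors $w_i=\mathbf u(x_i)/\sqrt{M_0}$ form a frame whose frame operator is comparable to $I$, and $|w_i|^2\le KN/M_0=:\delta$.

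\textbf{Step 2: the MSS halving step.} We use the Weaver-type corollary of \cite{MSS}, in the form of Nitzan--Olevskii--Ulanovskii. Let $v_1,\ldots,v_M\in\RR^N$ satisfy $\sum v_iv_i^T=S$ with $\alpha I\preceq S\preceq\beta I$ and $|v_i|^2\le\delta$. Then there is a partition into two sets $J_1,J_2$ such that
\[
\Bigl(\tfrac12-C\sqrt{\delta/\alpha}\Bigr)S\preceq\sum_{i\in J_k}v_iv_i^T\preceq\Bigl(\tfrac12+C\sqrt{\delta/\alpha}\Bigr)S .
\]
This is the relative form, proved by applying MSS to $S^{-1/2}v_i$, whose squared norms are at most $\delta/\alpha$. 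We keep the smaller of the two parts, so that the number of points also halves, and rescale by $2$. After $s$ steps we have about $M_0/2^s$ vectors with norm bound $2^s\delta$, and the multiplicative distortion is
\[
\prod_{t<s}\bigl(1\pm C\sqrt{2^t\delta}\bigr).
\]

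\textbf{Step 3: controlling the accumulated error (the main obstacle).} The sum $\sum_t\sqrt{2^t\delta}$ is geometric and dominated by its last term $\sqrt{2^s\delta}$. We stop once $2^s\delta\asymp\epsilon^2$, which gives $m\asymp M_0/2^s\asymp KN\epsilon^{-2}$. The distortion then stays in $[1-C\epsilon,1+C\epsilon]$, and rescaling $\epsilon$ finishes the argument.

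The difficulty is that the points must be equally weighted and the count must be exactly controlled. Two issues arise here. First, MSS partitions are not balanced in cardinality, so the halving factor in Step 2 is only approximate. Second, the lower bound requires $\alpha$ to stay of order $1$ while the upper bound is only of order $K$ when the original frame is not tight. The factor $K$ on the right-hand side of \eqref{4-10:2024} reflects this second issue.

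\textbf{Step 4: fixing the weights and the cardinality.} To handle both issues, I would first use the trace identity
\[
\sum_{i\in J}|w_i|^2=\operatorname{tr}\Bigl(\sum_{i\in J}w_iw_i^T\Bigr)\approx\tfrac12\operatorname{tr}S .
\]
Since $|w_i|^2\le KN/M_0$ only from above, this controls the cardinality from below, $|J|\gtrsim M/(2K)$. Then, if necessary, I would discard points or pad by duplication. Duplication only increases the upper bound, by a factor of at most $K$. I expect this bookkeeping, together with the verification that the iteration keeps $\alpha\gtrsim1$ at every step, to be the most delicate part.
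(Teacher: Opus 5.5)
Your proposal is correct and follows the route the paper sketches: a preliminary random discretization via Theorem~\ref{Thm-4-1}, then MSS-based halving in relative form, iterated as in \cite{BLM, Lu89}, keeping the smaller part each time and using the trace identity to bound $|J|$ from below, as in \cite{LimT, Ko22}. No padding or discarding is needed: with $m=|J_s|$, the bound $|J_s|\le M_0 2^{-s}$ gives the lower constant $1-\epsilon$, and the trace bound $|J_s|\ge c\,M_0 2^{-s}/K$ produces the factor $K$ (which comes from $|w_i|^2$ being bounded only from above, not from non-tightness, since the Step~1 frame is already nearly tight).
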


We point out that Theorem \ref{thm-4-2} was established  in \cite{LimT} with general positive constants $C_1$ and $C_2K$ instead  of the constants $1-\epsilon$ and $(1+\epsilon)K$ in
\eqref{4-10:2024}. The current version of Theorem~\ref{thm-4-2} is due to~\cite{Ko22}, where it was observed that
inequality~\eqref{4-10:2024} can be obtained by refining the iteration technique introduced in~\cite{LimT}.

Theorem \ref{thm-4-2} combined  with the change of density argument as given in the proof of Corollary \ref{cor-3-1} yields immediately the following result:

{\begin{Corollary} [\cite{Ko22}] \label{cor-3-10}Given any  $N$-dimensional space $X_N$  of bounded functions on $\Omega$, and  any $\epsilon\in (0,1)$,  there exist  $\xi^1, \ldots, \xi^m \in \Omega$ with $m \leq C \epsilon^{-2}  N$ and $C$ being an absolute constant and positive weights $\ld_1, \cdots, \ld_m>0$ such that
\begin{equation*}
(1-\epsilon)\|f\|_{L_2(\Omega, \mu)}^2 \leq  \sum_{j=1}^m \ld_j |f(\xi^j)|^2 \leq (1+\epsilon) \|f\|_{L_2(\Omega, \mu)}^2,\   \ \forall f\in X_N.
\end{equation*}
\end{Corollary}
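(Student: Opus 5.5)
The plan is to combine Theorem~\ref{thm-4-2} with $K=1$ and the change of density argument from the proof of Corollary~\ref{cor-3-1}. Let $\{u_1,\ldots,u_N\}$ be an orthonormal basis of $X_N$ in $L_2(\Omega,\mu)$. As in that proof, set
\[
F(x):=\Bigl(\frac1N\sum_{k=1}^N |u_k(x)|^2\Bigr)^{1/2},
\]
so that $F$ is bounded because the $u_k$ are bounded, and $\int_\Omega F^2\,d\mu=1$.

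Let $\Omega':=\{x\in\Omega\colon F(x)>0\}$ and put $d\nu:=F^2\,d\mu$. Then $\nu$ is a probability measure concentrated on $\Omega'$. Every $f\in X_N$ vanishes on $\Omega\setminus\Omega'$, since $|f(x)|\le \sqrt{N}\,F(x)\,\|f\|_{L_2(\Omega,\mu)}$. Define $Uf:=f/F$ on $\Omega'$. I will regard $Y_N:=UX_N$ as an $N$-dimensional space of functions on the probability space $(\Omega',\nu)$.

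The first step is to check the hypotheses of Theorem~\ref{thm-4-2} for $Y_N$:
\begin{itemize}
\item $\|Uf\|_{L_2(\Omega',\nu)}=\|f\|_{L_2(\Omega,\mu)}$, so $U$ is an isometry and $\dim Y_N=N$.
\item The functions $u_k/F$ form an orthonormal basis of $Y_N$ in $L_2(\Omega',\nu)$, and $\sum_k |u_k(x)/F(x)|^2=N$ for every $x\in\Omega'$.
\item By \eqref{Nik-equiv}, this gives $Y_N\in \textnormal{NI}_{2,\infty}(\sqrt{N})$, i.e.\ $K=1$; in particular every $g\in Y_N$ is bounded on $\Omega'$.
\end{itemize}

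The second step is to apply Theorem~\ref{thm-4-2} to $Y_N$ on $(\Omega',\nu)$ with $K=1$. This produces points $\xi^1,\ldots,\xi^m\in\Omega'$ with $m\le C\epsilon^{-2}N$ such that
\[
(1-\epsilon)\|g\|_{L_2(\Omega',\nu)}^2\le \frac1m\sum_{j=1}^m |g(\xi^j)|^2\le (1+\epsilon)\|g\|_{L_2(\Omega',\nu)}^2,\qquad \forall g\in Y_N.
\]
Since $K=1$, the upper constant $(1+\epsilon)K$ is exactly $1+\epsilon$.

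The final step is to substitute $g=Uf$. This gives the claim with weights
\[
\lambda_j:=\frac{1}{mF(\xi^j)^2}=\frac{N\lambda_N(\xi^j)}{m}>0,
\]
where $\lambda_N$ is the Christoffel function; the weights are positive and finite because each $\xi^j\in\Omega'$.

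The only delicate point is the restriction to $\Omega'$, which keeps the weights well defined and preserves the hypotheses of Theorem~\ref{thm-4-2}. Beyond that, the argument is a direct transfer, since all the substantive work is contained in Theorem~\ref{thm-4-2}.
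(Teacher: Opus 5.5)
Your proposal is correct and follows the paper's route: apply Theorem~\ref{thm-4-2} with $K=1$ to $UX_N$ in $L_2(\nu)$, where $d\nu=F^2\,d\mu$, and then take $\lambda_j=\frac{1}{mF(\xi^j)^2}$. Your restriction to $\Omega'=\{F>0\}$ is a tidy way to keep the weights well defined; in the paper's version, $Uf:=0$ on $\{F=0\}$, and any nodes falling there could simply be discarded because they contribute nothing to the sum.
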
}

{
\begin{Example}
Let $Q \subset \mathbb{Z}^d$ be a finite set of frequencies, and consider the space
\[
\mathcal{T}(Q) := \operatorname{span}\bigl\{ e^{\mathbf{i} \<k,\cdot\>} : k \in Q \bigr\}
\]
of all trigonometric polynomials with frequencies from $Q$ on the cube
$\Omega = [0, 2\pi)^d$, equipped with the normalized Lebesgue measure $\mu$.
Clearly,  $\mathcal{T}(Q) \in \textnormal{NI}_{2,\infty}(\sqrt{N})$ with
$N := \dim \mathcal{T}(Q) = |Q|$.
Therefore, by Theorem \ref{thm-4-2}, we can find  a set of  $m \leq C \epsilon^{-2} N$
points $\xi^1, \ldots, \xi^m\in \Omega$
such that
\[
(1-\epsilon)\|f\|_{L_2(\Omega, \mu)}^2 \leq \frac{1}{m} \sum_{j=1}^m |f(\xi^j)|^2 \leq (1+\epsilon) \|f\|_{L_2(\Omega, \mu)}^2,\   \ \forall f\in \mathcal{T}(Q).
\]
\end{Example}
}

{
We conclude this section with the following tight estimate on  weighted Marcinkiewicz discretization inequality of $L_2$-norms, which follows directly from a result of G. Schechtman \cite{Sc}.
\begin{Theorem}[\cite{Sc}] \label{Thm-3-11-sc} If $X_N$ is an $N$-dimensional subspace of $L_2(\Omega)$, then for any $b \in(1,2]$, there exist a set of $m \leq b N$ points $x^1, \ldots, x^m \in \Omega$ and a set of nonnegative weights $\lambda_j$, $j=1, \ldots, m$ such that
$$
\|f\|_2 \leq\left(\sum_{j=1}^m \lambda_j\left|f\left(x^j\right)\right|^2\right)^{1 / 2} \leq \frac{C}{b-1}\|f\|_2, \quad \forall f \in X_N
$$
where $C>1$ is an absolute constant.
\end{Theorem}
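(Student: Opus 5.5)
We reduce the statement to a matrix sparsification problem of Batson--Spielman--Srivastava type, and this reduction is how it should be read as following from Schechtman's result. Fix an orthonormal basis $u_1,\dots,u_N$ of $X_N$ and set $\mathbf{u}(x)=(u_1(x),\dots,u_N(x))$. As in the proof of Corollary~\ref{cor-3-1}, we may discard the null set of points where $\mathbf{u}$ vanishes. Then
\[
I=\int_\Omega \mathbf{u}(x)^T\mathbf{u}(x)\,d\mu(x).
\]
By Lemma~\ref{Lema-4-1}, it suffices to find points $x^j$ and weights $\lambda_j\ge0$ with $m\le bN$ such that
\[
I\preceq \sum_{j=1}^m \lambda_j\, \mathbf{u}(x^j)^T\mathbf{u}(x^j)\preceq \frac{C^2}{(b-1)^2}\,I,
\]
in the L\"owner order. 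This is the two-sided spectral form of the desired inequality, with the normalization chosen so that the lower constant is $1$.

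\emph{Step 1: finite decomposition of $I$.} First we replace the integral by a finite positive combination of rank-one matrices $v_iv_i^T$ with $v_i=\mathbf{u}(y_i)$. The set $\{\mathbf{u}(x)^T\mathbf{u}(x)/|\mathbf{u}(x)|^2\}$ lies in a finite-dimensional space, and $I$ belongs to the closure of the cone it generates. By Carath\'eodory's theorem (or by approximating $\mu$ by a discrete measure and losing a factor $1\pm\delta$, which we absorb into the constant), we obtain
\[
I\approx \sum_{i=1}^M w_i\, v_iv_i^T,\qquad w_i>0,
\]
with $M$ finite, possibly of order $N^2$. Put $\tilde v_i=\sqrt{w_i}\,v_i$, so that $\sum_i \tilde v_i\tilde v_i^T$ is (up to $1\pm\delta$) an isotropic frame.

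\emph{Step 2: spectral sparsification.} We apply the Batson--Spielman--Srivastava barrier argument, which is in effect the content of Schechtman's result \cite{Sc}, to the isotropic family $\{\tilde v_i\}$ with $m=\lfloor bN\rfloor$ steps. Write $b=d$; the BSS theorem is usually stated for $d>1$ as
\[
\frac{(\sqrt d-1)^2}{d}\, I\preceq \sum_i s_i\,\tilde v_i\tilde v_i^T\preceq \frac{(\sqrt d+1)^2}{d}\, I,
\]
with $s_i\ge0$ and at most $dN$ nonzero $s_i$. The ratio of the two bounds is
\[
\Bigl(\frac{\sqrt d+1}{\sqrt d-1}\Bigr)^2\asymp (b-1)^{-2}\qquad\text{for } b\in(1,2].
\]
Rescaling the weights so that the lower bound equals $I$, the upper bound becomes $C^2(b-1)^{-2}I$. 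Taking square roots gives the constant $C/(b-1)$ in the theorem. The weights are $\lambda_j=s_jw_j$ times this rescaling factor, and the points are the corresponding $y_j$.

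\emph{Main obstacle.} The hardest step is Step~2, namely the barrier-potential argument. At each step one must choose a vector and a weight so that the upper potential $\operatorname{tr}(uI-A)^{-1}$ does not increase while the lower potential $\operatorname{tr}(A-\ell I)^{-1}$ does not increase, and the barriers advance by fixed increments $\delta_U$ and $\delta_L$. An averaging argument over the isotropic family shows that such a vector always exists. Tuning the increments as functions of $d$ yields the $(\sqrt d\pm1)^2$ bounds. Step~1 is routine, but care is needed to ensure that the approximation error $\delta$ does not spoil the lower bound. We handle this by first applying a small preconditioning $A\mapsto (\sum_i w_iv_iv_i^T)^{-1/2}A(\sum_i w_iv_iv_i^T)^{-1/2}$, which makes the family exactly isotropic, and then undoing it at the end at the cost of a factor $1+O(\delta)$.
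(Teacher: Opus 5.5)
Your main line is the intended one. The paper gives no argument beyond citing Schechtman, and his result is exactly the Batson--Spielman--Srivastava theorem applied to a finite isotropic decomposition of $I=\int_\Omega \mathbf{u}^T\mathbf{u}\,d\mu$. Your reduction (finite decomposition, preconditioning, BSS, rescaling so the lower constant is $1$) reconstructs this correctly, and the computation $\bigl(\frac{\sqrt d+1}{\sqrt d-1}\bigr)^2\asymp (b-1)^{-2}$ is right. Step~1 can even be made exact. The random matrix $\mathbf{u}(\xi)^T\mathbf{u}(\xi)$ lies in the convex set $\operatorname{conv}\{\mathbf{u}(x)^T\mathbf{u}(x)\}$, so its mean $I$ does too. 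Carath\'eodory then gives $I=\sum_{i\le M} w_i\,\mathbf{u}(y_i)^T\mathbf{u}(y_i)$ exactly, with $M\le N(N+1)/2+1$, and no preconditioning is needed.

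The step that fails as written is running the barrier argument for $\lfloor bN\rfloor$ steps with $d=b$. In that argument the lower barrier after $Q$ steps is $\ell_Q=Q\delta_L-N/\epsilon_L$, and the method requires $\epsilon_L<1/\delta_L$, so $\ell_Q<\delta_L(Q-N)$. The BSS proof runs exactly $dN$ steps, so it tacitly needs $dN\in\mathbb{Z}$.

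When $bN\notin\mathbb{Z}$ and $\lfloor bN\rfloor\ge N+1$, this is easily repaired. Use $d=\lfloor bN\rfloor/N$; then $d-1=\lfloor (b-1)N\rfloor/N\ge (b-1)/2$, so the constant is unaffected. But for $1<b<1+1/N$, the requirement $m\le bN$ forces $m=N$, and $N$ steps never give a positive lower barrier. This range is explicitly included in the statement, and your argument does not cover it.

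There $C/(b-1)>CN$, so it suffices to find $N$ points with norm ratio $O(N)$, which needs a different selection step:
\begin{enumerate}
\item Apply BSS with $d=2$ to get $2N$ weighted vectors with norm ratio $(\sqrt2+1)^2$.
\item Precondition again, so that the $2N\times N$ matrix $U$ with rows $\hat v_j^T$ has orthonormal columns.
\item Take a maximal-volume $N\times N$ row submatrix $U_S$.
\end{enumerate}
By Cramer's rule, all entries of $UU_S^{-1}$ are bounded by $1$ in absolute value, and its rows indexed by $S$ form the identity. Since $U$ is an isometry, $\|U_S^{-1}\|=\|UU_S^{-1}\|\le\sqrt{1+N^2}$, while $U_S^TU_S\preceq U^TU=I$. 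Undoing the preconditioning costs only an absolute factor. This gives the claim with $m=N$ for all $b\in(1,1+1/N)$ once $C$ is chosen large enough.
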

Note that in Theorem \ref{Thm-3-11-sc}, the parameter $b>1$ can be chosen to be arbitrarily close to $1$.}

\section{Marcinkiewicz discretization theorems}\label{entropy}

{This section presents general results and methods concerning the discretization of $L_p$ norms for $p \neq 2$.
}

\subsection{A conditional theorem}

Let
$\CW$ be a compact subset of the space of bounded functions on $\Og$.
Let  $\xi^1,\xi^2,\ldots$ be a sequence of independent random points in $\Omega$.  Consider the following  random sampling discretization problem:  for  $\epsilon\in (0, 1)$,  $1\leq p<\infty$ and $ f\in\CW$,
\begin{equation}\label{6-1:2024}
	(1-\epsilon) \|f\|_{L_p(\Omega,\mu)}^p \leq \frac  1m \sum_{j=1}^m |f(\xi^j)|^p\leq (1+\epsilon) \|f\|_{L_p(\Omega,\mu)}^p.
\end{equation}
Without loss of generality, we may assume that
\[
\CW\subset B_{L_p}:=\bigl\{f\in L_p(\Omega, \mu): \|f\|_{L_p(\Omega,\mu)}\leq 1\bigr\}
\]
since otherwise we may replace $\CW$ with the set $\bigl\{ f/\|f\|_{L_p(\Omega,\mu)}\colon f\in\CW\bigr\}.$
Our goal  is  to estimate the probability that \eqref{6-1:2024} holds for all $f\in\CW$ in terms of the   number $m$ of required random points.
Such an estimate can be established in terms of an integral of the $\epsilon$-entropy of the set $\CW$ in the uniform metric. We begin by recalling standard definitions and known estimates concerning entropy (see \cite{CS90,ET96, Ta} and \cite[Chapter 7]{VTbookMA}).

\vskip .1in

\textbf{Entropy.}
 Consider a normed linear space
$X := (X,\|\cdot\|_X)$, and let
\[
B_X := \bigl\{x \in X\colon \|x\|_X \le 1\bigr\}
\]
denote the closed unit ball in $X$.
For any $\epsilon > 0$, the covering number $N_\epsilon(A,X)$ of a compact
subset $A \subset X$ is defined as the smallest positive integer $n$ for which
there exist elements $x^1,\ldots,x^n \in A$ such that
\[
A \subset \bigcup_{j=1}^n \bigl(x^j + \epsilon B_X\bigr).
\]

The $\epsilon$-entropy $\mathcal{H}_\epsilon(A;X)$ of a compact set $A$ in $X$
is defined by
\[
\mathcal{H}_\epsilon(A,X) := \log_2 N_\epsilon(A,X),
\]
while the entropy numbers $e_k(A,X)$ of the set $A$ in $X$ are defined by
\[
e_k(A,X) := \inf\{\epsilon > 0\colon \mathcal{H}_\epsilon(A,X) \le k\},
\qquad k = 0,1,2,\ldots.
\]

Note that in our definition we require $x^j \in A$, which differs from the standard
definition of $N_\epsilon(A,X)$ and $e_k(A,X)$, where this restriction is not imposed.
However, it is well known (see~\cite{VTbookMA}, p.~208) that these characteristics may
differ by at most a factor of~$2$.
To emphasize the dependence on the seminorm $\|\cdot\|_X$, we often write
$e_k(A,\|\cdot\|_X) = e_k(A,X)$ and
$\mathcal{H}_\epsilon(A,\|\cdot\|_X) = \mathcal{H}_\epsilon(A,X)$.

There is a simple and useful bound for the entropy of the unit ball in the finite-dimensional case.
If $X=\mathbb{R}^m$, then for any $0 < \epsilon \le 1$,
\begin{equation}\label{5-2}
m \log_2 \frac{1}{\epsilon}
\le \mathcal{H}_\epsilon(B_X,\|\cdot\|_X)
\le m \log_2 \Bigl(1 + \frac{2}{\epsilon}\Bigr).
\end{equation}
This also implies that
\[
2^{-1} \cdot 2^{-k/m}
\le e_k(B_X,\|\cdot\|_X)
\le 6 \cdot 2^{-k/m},
\quad k \in \mathbb{N}.
\]
Sharp entropy estimates are generally difficult to establish in a general high-dimensional setting. Here we only  formulate a useful  bound  for entropy numbers, which will be used later.

\begin{Theorem}[{\cite[Lemma 4.10, Corollary 4.2]{Ko21}}]\label{thm-5-2}
Let
$\Omega_m=\{\xi^1, \ldots, \xi^m\}\subset \Omega$ be a finite subset of  points in $\Omega$.
Let  $X_N$ be an $N$-dimensional space of bounded functions on $\Omega$ satisfying
\begin{equation}\label{5-10:2024}
\|f\|_{L_\infty(\Omega)} \leq \sqrt{M} \|f\|_{L_2(\Omega, \mu)},\   \ \forall f\in X_N,
\end{equation}
where $M\ge 2$ is a constant.
Then for any $1\leq p \leq 2$,
\begin{equation}\label{5-11:2024}
e_k (X_N^p, \|\cdot\|_{L_\infty(\Omega_m)} ) \leq C_p \sqrt{\log m} ( \log M) ^{\frac 1p-\frac12} \Bl( \frac M {k} \Br) ^{1/p},\    \     k\in \mathbb{N},
\end{equation}
where  $X_N^p:=\{f\in X_N\colon \|f\|_{L_p(\Omega,\mu)}\leq 1\}$.

\end{Theorem}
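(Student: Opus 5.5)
We argue through duality of entropy numbers (Maurey's lemma), first for $p=1$ and then for general $p\in[1,2]$ by an interpolation/extrapolation argument in the spirit of \eqref{Nik-impl}.

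\textbf{Step 1: reduction to an $L_1$-type hull.} By the Nikol'skii bound \eqref{5-10:2024} and \eqref{Nik-impl}, every $f\in X_N^p$ satisfies $\|f\|_{L_2(\Omega,\mu)}\le M^{1/p-1/2}$ and $\|f\|_{L_\infty(\Omega)}\le M^{1/p}$. Fix an orthonormal basis $\{u_j\}$ of $X_N$ and write $f(x)=\int_\Omega f(y)\,\CD(X_N,x,y)\,d\mu(y)$. For $p=1$ this exhibits $X_N^1$ as contained in the closed absolutely convex hull of the functions $g_y:=\CD(X_N,\cdot,y)$, $y\in\Omega$. Restricted to $\Omega_m$, each $g_y$ satisfies $|g_y(\xi^i)|\le \Phi(\xi^i)^{1/2}\Phi(y)^{1/2}\le M$. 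Thus $X_N^1|_{\Omega_m}\subset M\cdot\overline{\mathrm{absconv}}\{v_y\}$ with $v_y\in B_{\ell_\infty^m}$.

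\textbf{Step 2: Maurey's empirical method.} For a convex hull of $n$ or infinitely many points of $B_{\ell_\infty^m}$, the standard Maurey lemma gives that any point of the hull is approximated in $\ell_\infty^m$ within $C\sqrt{\log m/s}$ by an average of $s$ hull points. The proof is a sub-Gaussian maximal inequality over the $m$ coordinates, which is the source of the factor $\sqrt{\log m}$. To count the averages, we first discretize the generators: the set $\{v_y\}$ is a bounded subset of an $N$-dimensional space, so we may replace it by a net of size $(C/\delta)^N$. However, the right-hand side of \eqref{5-11:2024} is independent of $N$, so we must instead use the dual Sudakov/Carl route. We bound entropy by a Gaussian/Rademacher mean via Dudley's inequality, or equivalently use Carl's inequality combined with the Maurey bound on approximation numbers. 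This yields $e_k(X_N^1,L_\infty(\Omega_m))\le C\sqrt{\log m}\,M/k$ for $k\ge$ some threshold, and trivially $\le M$ otherwise. A cleaner alternative is to exploit the Hilbert structure directly: write $f=\sum a_j u_j$, view evaluation at $\Omega_m$ as an operator $\ell_2^N\to\ell_\infty^m$ with rows of norm $\le\sqrt M$, and apply the dual Sudakov inequality to obtain $e_k(B_{\ell_2^N}\to\ell_\infty^m)\le C\sqrt{M\log m/k}$. This is the $p=2$ case, $C\sqrt{\log m}\,(M/k)^{1/2}$.

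\textbf{Step 3: passing to $1\le p<2$.} This is the main obstacle, and the $(\log M)^{1/p-1/2}$ factor comes from it. The plan is the Bourgain--Lindenstrauss--Milman/Talagrand-type iteration. Split $f\in X_N^p$ via the level set: write $\|f\|_2^2\le\|f\|_p^p\|f\|_\infty^{2-p}$, so the set $\{f\in X_N^p:\|f\|_\infty\le t\}$ lies in $t^{1-p/2}X_N^2$. Its entropy is controlled by Step 2 with an extra factor $t^{1-p/2}$. We then cover $X_N^p$ in $L_\infty(\Omega_m)$ dyadically in $t=2^{-s}M^{1/p}$, running the dyadic-level covering argument from \cite{Ko21}. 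At each level we pay $\sqrt{\log m}\,(M/k_s)^{1/2}t^{1-p/2}$, and we optimize the allocation of $k=\sum_s k_s$ over the $O(\log M)$ levels. Summing a geometric-type series with $\log M$ terms and applying Hölder's inequality with exponents matched to $1/p-1/2$ produces $(\log M)^{1/p-1/2}(M/k)^{1/p}$. The delicate point is the self-improving (bootstrapping) nature: the covering at level $t$ uses centers from $X_N^p$ whose differences must again be split. We handle this by iterating the inclusion $X_N^p\cap tB_{L_\infty(\Omega_m)}$ over the successive scales, which is where we expect most of the work to lie.
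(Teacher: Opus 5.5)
Your $p=2$ step is correct: dual Sudakov for the evaluation map $\ell_2^N\to\ell_\infty^m$, whose rows have norm at most $\sqrt M$, gives exactly the $p=2$ case the paper takes from \cite{Ko21}. Step~3, however, fails at its key inclusion. The set you iterate on, $(X_N^p-X_N^p)\cap tB_{L_\infty(\Omega_m)}$, is not contained in $c\,t^{1-p/2}X_N^2$. The inequality $\|g\|_{L_2(\mu)}^2\le\|g\|_{L_p(\mu)}^p\|g\|_{L_\infty}^{2-p}$ needs the supremum of $|g|$ over all of $\Omega$ ($\mu$-a.e.), whereas your chaining only makes differences small on the $m$ points of $\Omega_m$. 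For example, if $m<N$ there is $g\in X_N$ with $\|g\|_{L_p(\mu)}=2$ and $g=0$ on $\Omega_m$. Then $g$ lies in that set for every $t>0$, although $\|g\|_{L_2(\mu)}\ge 2$.

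Chaining in $L_\infty(\Omega)$ instead does not help, because $e_k(X_N^2,L_\infty(\Omega))$ need not be $O(\sqrt{M\log M/k})$. For linear functions on the sphere of radius $\sqrt N$ in $\mathbb{R}^N$ with normalized surface measure, one has $M=N$ but $e_k(X_N^2,L_\infty(\Omega))\gtrsim\sqrt N$ for $k\le N$.

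Even when $\Omega_m=\Omega$, your scheme gives only $\mathcal{H}_\epsilon\lesssim M\log m\,\epsilon^{-p}$, i.e.\ $(\log m)^{1/p}(M/k)^{1/p}$. The dyadic sum is geometric, so there is no factor $\log M$ for H\"older to redistribute, and $(\log m)^{1/p}$ is worse than the claimed $\sqrt{\log m}\,(\log M)^{1/p-1/2}$ when $\log m\gg\log M$.

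Relatedly, the $p=1$ bound $C\sqrt{\log m}\,M/k$ you claim in Step~2 is false. Take $\Omega=\Omega_m=\{1,\dots,m\}$ with uniform $\mu$ and $X_N=\mathbb{R}^m$. Then $M=m$, $X_N^1=mB_{\ell_1^m}$, and $e_k(B_{\ell_1^m},\ell_\infty^m)\asymp k^{-1}\log(1+m/k)$ for $\log m\le k\le m$. So the factor $\sqrt{\log M}$ cannot be dropped.

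The missing idea is to separate the two logarithms by factoring through $L_2(\mu)$. Since $X_N$ is linear, two elements of $X_N^p$ at $L_2(\mu)$-distance at most $\delta$ differ by an element of $\delta X_N^2$. Hence $e_{2k}(X_N^p,L_\infty(\Omega_m))\le C\,e_k(X_N^p,L_2(\mu))\,e_k(X_N^2,L_\infty(\Omega_m))$. The second factor is your dual Sudakov bound $C\sqrt{\log m}\,(M/k)^{1/2}$, and it is the only place where $\Omega_m$ enters.

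For $1\le p<2$ the first factor must satisfy $e_k(X_N^p,L_2(\mu))\le C_p(\log M)^{1/p-1/2}(M/k)^{1/p-1/2}$, which does not involve $\Omega_m$ at all. This is where your level-set iteration does work, provided it is run in $L_q(\mu)$ with $q\asymp\log M$ instead of $L_\infty$:
\begin{itemize}
\item For a standard Gaussian vector $G$ in $\mathbb{R}^N$, dual Sudakov gives $\mathbb{E}\|\langle G,\mathbf u\rangle\|_{L_q(\mu)}\le C\sqrt{qM}$.
\item Two elements of $X_N^p$ at $L_q(\mu)$-distance at most $t$ differ by an element of $2t^{1-\alpha}X_N^2$, where $\alpha=(1/2-1/q)/(1/p-1/q)\to p/2$.
\item Since $X_N^p$ has $L_\infty$-diameter at most $2M^{1/p}$, the choice $q\asymp\log M$ makes the deviation of $\alpha$ from $p/2$ cost only a constant.
\end{itemize}
The geometric sum over scales then yields $\mathcal{H}_\epsilon(X_N^p,L_2(\mu))\le C_pM\log M\,\epsilon^{-2p/(2-p)}$. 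Multiplying the two entropy bounds gives exactly the stated estimate.

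This is the structure of the paper's argument for $p=1$: the product inequality combined with estimates from \cite{DPSTT2}. For $1<p<2$ the paper instead invokes Kosov's adaptation \cite{Ko21} of Talagrand's Proposition~16.8.6.
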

\begin{Remark} For $p \in (1,2)$, Theorem \ref{thm-5-2} was established in \cite[Lemma 4.10]{Ko21} via a slight modification of the proof of Proposition 16.8.6 in \cite{Ta}. The case $p=2$ is an immediate consequence of \cite[Corollary 4.2]{Ko21}. Finally, for $p=1$, the conclusion follows from \cite[(3.8) and (3.10)]{DPSTT2} and the following estimate on entropy numbers:
\[
e_{2k} (X_N^1, \|\cdot\|_{L_\infty(\Og_m)} ) \leq C e_{k} (X_N^1, \|\cdot\|_{L_2(\Og, \mu)}) e_{k}( X_N^2, \|\cdot\|_{L_\infty(\Og_m)}).
\]

\end{Remark}

\begin{Remark} While the dimension $N$ does not appear explicitly  in  the estimate~\eqref{5-11:2024}, the constant $M$ in the Nikolski inequality \ref{5-10:2024} typically depends on $N$, particularly  when $m \gg N$.
Furthermore, the use of the $L_\infty(\Omega_m)$ norm embeds the original space $X_N$ into a subspace of $\mathbb{R}^m$.
\end{Remark}

{
\begin{Remark}\label{rem-ent}
By definition, it is easily seen that  the estimates of entropy numbers
$$e_k(A, X)\le R k^{-\alpha},\  \ k=1,2,\cdots$$
imply
$$\mathcal{H}_\epsilon(A, X)\le c(R/\epsilon)^{1/\alpha},\  \ \va>0,$$
where $R>0$ is a constant independent of $k\in\NN$,  and $c>0$ is an absolute constant.
As a result,   Theorem \ref{thm-5-2}
implies that under the condition \ref{5-10:2024},  for any $1\leq p\leq 2$, we have
\[
\mathcal{H}_\epsilon(X_N^p, \|\cdot\|_{L_\infty(\Omega_m)})
\le
C_p(\log m)^{\frac{p}{2}} (\log M) ^{1-\frac{p}{2}}M\va^{-p},\  \ \va>0.
\]
\end{Remark}	
}

\vskip .1in

We are now ready to formulate the conditional discretization theorem for the
Marcinkiewicz-type discretization inequality of the form~\eqref{6-1:2024}.
The following theorem
extends    results  in  \cite[Theorem~1.3]{DPSTT1},  \cite[Theorem~5.1]{DT} and  \cite{Te18}.

\begin{Theorem}[{\cite[Theorem 6.1]{DTM2}}]\label{prL1}
{Let $(\Omega, \mu,\cF)$ be  a  probability space.}	
Let $\CW$ be a  nonempty set  of bounded functions on $\Omega$  satisfying the following conditions for some $1\leq p<\infty$:
\begin{enumerate}
  \item [\textnormal {(i)}]
  $\sup\limits_{f\in\CW} \sup\limits_{x\in\Omega}  |f(x)|\leq R^{1/p}$ for some  constant $R\ge 1$;
\item [\textnormal {(ii)}]
$\{\lambda f\colon \lambda>0, f\in\CW,  \  \|f\|_{L_p(\Og, \mu)}\leq 1\}=\CW$.
\end{enumerate}
Let $\xi^1, \ldots, \xi^m$ be a sequence of independent random points  on $\Omega$ satisfying
\begin{equation}\label{6-1}
	 \sum_{k=1}^m \PP[\xi^k\in E]=m \cdot \mu(E),\   \ \forall E\in\cF.
	\end{equation}
	Then there exist positive constants $C_p\ge 1$ and $ c_p\in (0, 1)$
depending only on $p$   such that for any  $\epsilon\in (0, 1/2)$ and  any
	integer
	\begin{equation}\label{6-3:2024}
	m\ge  C_p 	  \epsilon^{-5 }   \Bl(\log \frac R\epsilon\Br) \int_{c_p\epsilon}^R \cH_{\epsilon t^{1/p}} (\CW; L_\infty) dt,
	\end{equation}
	the inequality
	\[
		(1-\epsilon) \|f\|_{L_p(\Omega,\mu)}^p \leq \frac  1m \sum_{j=1}^m |f(\xi^j)|^p\leq (1+\epsilon) \|f\|_{L_p(\Omega,\mu)}^p
	\]
	holds for all $f\in\CW$  with probability at least
	\[ 1-     \exp\Bl(-\frac {c_pm\epsilon^4} {R (\log \frac R\epsilon)^2}\Br).\]
\end{Theorem}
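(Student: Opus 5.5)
We bound the supremum of the empirical deviation
\[
Z:=\sup_{f\in\CW}\Bl|\frac1m\sum_{j=1}^m |f(\xi^j)|^p-\frac1m\sum_{j=1}^m\EE|f(\xi^j)|^p\Br|
\]
by a chaining argument in the uniform metric. We combine it with a Bernstein-type concentration inequality for each link of the chain, and with a peeling over the dyadic level sets of $\|f\|_{L_p}$. Condition \eqref{6-1} gives $\frac1m\sum_j\EE|f(\xi^j)|^p=\|f\|_{L_p(\Og,\mu)}^p$, so the desired two-sided inequality is equivalent to $Z\le\epsilon\|f\|_p^p$ uniformly. By the cone condition (ii), it suffices to treat, for each dyadic level $t\in[c_p\epsilon,R]$, the layer $\CW_t=\{f\in\CW:\ t/2<\|f\|_{L_p}^p\cdot(\text{scale})\le t\}$. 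More concretely, we rescale each $f$ so that $\|f\|_\infty^p\le R$ and $\|f\|_p^p\asymp 1/t$, or equivalently we consider the functions $f$ whose normalized sup-norm is $t^{1/p}$.

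For the very small level, $t\le c_p\epsilon$, the desired estimate is trivial: the contribution of the region where $|f|^p\le c_p\epsilon$ to both the integral and the sum is at most $c_p\epsilon$. This explains the lower limit of integration in \eqref{6-3:2024}.

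For each layer we build nets $\mathcal N_s\subset\CW$ of mesh $\delta_s=\epsilon t^{1/p}2^{-s}$ in $L_\infty$. Their cardinality is controlled by $\cH_{\epsilon t^{1/p}2^{-s}}$. We write $f=f_0+\sum_s(f_{s+1}-f_s)$ and control the increments $\bigl||f_{s+1}|^p-|f_s|^p\bigr|$. By the mean value theorem, this is at most $p\,\|f_{s+1}-f_s\|_\infty\max(|f_s|,|f_{s+1}|)^{p-1}$, so the increment has sup-norm $\lesssim \delta_s R^{(p-1)/p}$ and variance $\lesssim\delta_s^2 R^{(p-2)/p}\cdot(\text{mass})$. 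The bound on the mass uses H\"older's inequality together with $\|f\|_p\le1$. Bernstein's inequality for independent (not necessarily identically distributed) summands then gives tail bounds of the form
\[
\exp\bigl(-c\,m\,\eta_s^2/(R\,\delta_s\cdots)\bigr).
\]
We choose the thresholds $\eta_s$ summing to at most $\epsilon/\log(R/\epsilon)$ per layer, so that the total over the $\log(R/\epsilon)$ layers is at most $\epsilon$. We then take a union bound over the $|\mathcal N_s||\mathcal N_{s+1}|$ pairs. The requirement that the union bound be absorbed turns into a condition of the form $m\gtrsim \epsilon^{-c}\log(R/\epsilon)\cdot t^{-1}\cH_{\epsilon t^{1/p}}$ at each level. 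Summing over dyadic $t$ converts this into the integral $\int\cH_{\epsilon t^{1/p}}\,dt$, since $\sum_{t \text{ dyadic}} t\,g(t)\asymp\int g$ after the appropriate normalization. The exponent $\epsilon^{-5}$ absorbs the losses from three sources: the per-layer share $\epsilon/\log$, the squaring in Bernstein's inequality, and the mesh $\epsilon t^{1/p}$.

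The main obstacle is the variance bookkeeping that makes the bound depend on $t$ correctly, so that the number of points is governed by the integral and not by $R\cdot\max_t\cH$. I would first try the crude Bernstein bound at the coarsest level $s=0$, which yields the stated probability
\[
\exp\bigl(-c_pm\epsilon^4/(R\log^2(R/\epsilon))\bigr),
\]
and use the geometrically decaying thresholds $\eta_s\propto 2^{-s/2}$ at the finer levels so that the chain sum converges. The terms for $s\ge1$ are dominated by the $s=0$ term once $m$ satisfies \eqref{6-3:2024}.
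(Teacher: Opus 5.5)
\textbf{There is a genuine gap: the proposal lacks the mechanism that produces} $\int_{c_p\epsilon}^R\mathcal{H}_{\epsilon t^{1/p}}(\mathcal{W},L_\infty)\,dt$, because it mixes two different peelings. Your layers $\mathcal{W}_t$ are sets of \emph{functions}, sorted after normalization by $\|f\|_\infty^p/\|f\|_p^p$. Under that reading the scheme breaks down in three ways.
\begin{itemize}
\item Since $\mu$ is a probability measure, $\|f\|_\infty\ge\|f\|_p$, so every layer with $t<1$ is empty.
\item Each $f$ lies in a single layer, so errors from different layers never add, and splitting $\epsilon$ into shares $\epsilon/\log(R/\epsilon)$ is pointless.
\item For $\|f\|_p=1$ and $\|f\|_\infty^p\approx t$, an $L_\infty$-approximant $g$ at mesh $\epsilon t^{1/p}$ controls $\int\bigl||f|^p-|g|^p\bigr|\,d\mu$ only up to about $p\,\epsilon t^{1/p}$, not $\epsilon$.
\end{itemize}
Combined with your global increment bound $p\,\delta_s R^{(p-1)/p}$, this leads back to a crude condition like $m\gtrsim\epsilon^{-2}R\,\mathcal{H}_{\epsilon}$, which is exactly what the integral is meant to beat.

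The variable $t$ in \eqref{6-3:2024} indexes the \emph{values} of a single function: $|f|^p$ splits into the parts where $|f|^p\approx t$. Your small-level paragraph silently uses this reading. The real difficulty is to make that decomposition compatible with net approximation, because the band $\{|f|^p\approx t\}$ depends on $f$ rather than on the net element, and indicators are not Lipschitz. You never address this.

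Two further steps also fail as written.
\begin{itemize}
\item Your per-level requirement $m\gtrsim t^{-1}\mathcal{H}_{\epsilon t^{1/p}}$ has the wrong power of $t$. The variance at level $t$ is of order $t$, which gives $t\,\mathcal{H}_{\epsilon t^{1/p}}$, and only that sums to $\int\mathcal{H}_{\epsilon t^{1/p}}\,dt$.
\item Your chain $\delta_s=\epsilon t^{1/p}2^{-s}$ runs \emph{below} the target mesh. It therefore uses entropies that \eqref{6-3:2024} does not control; for $t\approx c_p\epsilon$ they lie below every scale in the hypothesis. So the claim that the terms with $s\ge 1$ are dominated by $s=0$ is unfounded, and the series need not even converge.
\end{itemize}

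\textbf{What does work.} Take a Lipschitz partition of unity $\sum_t\psi(|f|^p/t)=1$ for $|f|^p\in[c\epsilon,R]$, with $t$ dyadic and $\psi$ supported in $[1/4,1]$. Then $|f|^p=\sum_t\psi(|f|^p/t)|f|^p+r_f$ with $0\le r_f\lesssim\epsilon$. For each $t$ take \emph{one} net $\mathcal{N}_t\subset\mathcal{W}$ of mesh $\asymp\epsilon t^{1/p}$.
\begin{itemize}
\item For $g\in\mathcal{N}_t$, the piece $\psi(|g|^p/t)|g|^p$ is bounded by $t$ and has $\mu$-mean at most $\|g\|_p^p\le 1$, hence variance at most $t$. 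Its empirical mean is unbiased by \eqref{6-1}. So Bernstein's inequality plus a union bound over $\mathcal{N}_t$ costs $m\gtrsim t\,\mathcal{H}_{\epsilon t^{1/p}}/\eta_t^2$.
\item The replacement error is deterministic: $\bigl|\psi(|f|^p/t)|f|^p-\psi(|g_t|^p/t)|g_t|^p\bigr|\lesssim_p\epsilon t\,\mathbf{1}_{\{|f|^p\asymp t\}}$. Summed over $t$ it is $\lesssim_p\epsilon|f|^p$ pointwise, so it is absorbed into both the empirical and the true norm. No chaining below mesh $\epsilon t^{1/p}$ is needed.
\item Finally, $t\,\mathcal{H}_{\epsilon t^{1/p}}\le 2\int_{t/2}^{t}\mathcal{H}_{\epsilon s^{1/p}}\,ds$. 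Choosing $\eta_t$ with $\sum_t\eta_t\le\epsilon$ and applying Cauchy--Schwarz over the $\asymp\log(R/\epsilon)$ levels gives a condition of the form \eqref{6-3:2024}.
\end{itemize}

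For comparison, the paper does not prove the statement from scratch. It quotes \cite[Theorem 6.1]{DTM2}, whose hypothesis is the nested Dudley-type integral \eqref{6-4}, and deduces \eqref{6-3:2024} from \eqref{6-4} by a change of variables, Cauchy--Schwarz and Fubini.
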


	\begin{Remark}
	Theorem \ref{prL1}   was proved in \cite[Theorem 6.1]{DTM2}, where  \eqref{6-3:2024} is replaced by the following slightly  different
	condition:
	\begin{equation}\label{6-4}
	m\ge  C_p 	  \epsilon^{-5 }\left(\int_{c_p \epsilon^{1/p}} ^{R^{1/p}} u^{\frac  p2-1}    \Bigl(\int_{  u}^{ R^{1/p} }\frac {\cH_{ \epsilon t}(\CW,L_\infty)}t\, dt\Bigr)^{\frac 12} du\right)^2.
\end{equation}

{Note that, performing the change of variables $v = u^p$ and $s = t^p$
 yields
\[
\text{RHS of \eqref{6-4}}
=
C'_p\epsilon^{-5 }\left(\int_{c'_p \epsilon} ^{R} v^{-\frac{1}{2}}
\Bigl(\int_{v}^{R}\frac{\cH_{ \epsilon s^{1/p}}(\CW,L_\infty)}{s}\, ds\Bigr)^{\frac 12} dv\right)^2,
\]
where $c_p'\in (0, 1)$.
Using the Cauchy--Bunyakovsky--Schwarz inequality, and the Fubini theorem, we then obtain
}
\begin{align*}
\text{RHS of \eqref{6-4}}& \leq C_p  	\epsilon^{-5}   \Bigl(\log \frac R\epsilon\Bigr)\int_{c_{p}'\epsilon}^R \cH_{\epsilon t^{1/p}} (\CW; L_\infty) dt.\end{align*}
\end{Remark}
\begin{Remark}

Note that  either of the following two conditions implies the condition  \eqref{6-1} in Theorem \ref{prL1}:
	\begin{enumerate}
		\item [\textnormal{(i)}] $\xi^1,\ldots,\xi^m$ are identically distributed according to  $\mu$;
		\item [\textnormal{(ii)}] there exists a partition $\{\Lambda_1, \ldots, \Lambda_m\}$ of $\Omega$ such that $\mu(\Lambda_j)=\frac1m$ and $\xi^j\in\Lambda_j$ is  distributed  according to    $m\cdot \mu\Bl|_{\Lambda_j}$  for each $1\leq j\leq m$.
	\end{enumerate}
	\end{Remark}

Theorem  \ref{prL1} combined with Theorem \ref{thm-5-2} implies Corollary \ref{cor-6-2a} below.

\begin{Corollary}\label{cor-6-2a}
Let $\{V_j\}_{j=1}^L$ be a sequence of linear subspaces of $\RR^n$  which satisfies the following condition for some constant $R\ge 1$:
\begin{equation}\label{6-5b}
\|f\|_{L_\infty^n} \leq\sqrt{R} \|f\|_{L_2^n},\   \    \   \forall f\in \bigcup_{j=1}^L V_j.
\end{equation}
Let $\xi^1, \ldots, \xi^m $ be a sequence of independent random variables taking values in $\{1,2,\ldots, n\}$ and  satisfying
\[ \frac1m \sum_{j=1}^m \PP[\xi^j\in E]=\frac {|E|}n,\   \ \forall E\subset \{1,2,\ldots,n\}.\]
Let $\epsilon\in (0, \frac12)$. Then the following statements hold.

\begin{enumerate}
\item [\textnormal{(i)}] If  $1\leq p\leq 2$, then there exist  constants $C_p(\epsilon)=C_p \epsilon^{-5-p}\log^2\frac 1\epsilon\ge 1$ and $c_p>0$  such that for any integer
\begin{equation*}\label{6-6a}
m \ge C_p(\epsilon)   R   \Bl[   (\log R)^{3-\frac p2}(\log n)^{\frac p2} +   (\log L)(\log R) \Br],
\end{equation*}
the Marcinkiewicz-type discretization inequality
\begin{equation}\label{eq-Marc-4}
(1-\epsilon) \|f\|_{L_p^n}^p \leq \frac  1m \sum_{j=1}^m |f(\xi^j)|^p\leq (1+\epsilon) \|f\|_{L_p^n}^p
\end{equation}
holds for all $ f\in  \bigcup_{j=1}^L V_j$  with probability
$$
1-\exp \Bl( -\frac {c_p m\epsilon^4} {R (\log \frac R \epsilon)^2}\Br).
$$

\item [\textnormal{(ii)}] If $2<p<\infty$, then there exist  constants  $C_p(\epsilon)=C_p\epsilon^{-7}\log\frac 1\epsilon>1$ and $c_p>0$ such that  for any integer
\[m\ge C_p(\epsilon)  R^{p/2} \log R \Bl[ \log n +\log L\Br],\]
the Marcinkiewicz-type discretization	inequality \eqref{eq-Marc-4}
holds for all $ f\in  \bigcup_{j=1}^L V_j$  with probability
$$ 1-\exp \Bl( -\frac {c_p m\epsilon^4} {R^{p/2} (\log \frac R \epsilon)^2}\Br).$$
\end{enumerate}
	
\end{Corollary}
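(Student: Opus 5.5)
We apply Theorem~\ref{prL1} with $\Omega=\{1,\ldots,n\}$, $\mu$ the uniform probability measure, and
\[
\CW:=\bigcup_{j=1}^L \{f\in V_j\colon \|f\|_{L_p^n}\le 1\}.
\]
Condition (ii) of Theorem~\ref{prL1} holds by construction, and the sampling assumption on $\xi^j$ is exactly \eqref{6-1}. For condition (i), \eqref{6-5b} combined with \eqref{Nik-impl} gives $\|f\|_{L_\infty^n}\le R^{1/p}\|f\|_{L_p^n}$ when $1\le p\le 2$. When $p>2$ we use $\|f\|_2\le \|f\|_p$ (probability measure) to get $\|f\|_\infty\le R^{1/2}\|f\|_p=(R^{p/2})^{1/p}\|f\|_p$. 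So the parameter $R$ of Theorem~\ref{prL1} is $R$ in case (i) and $R':=R^{p/2}$ in case (ii). This already yields the stated probability bounds. What remains is to bound the entropy integral in \eqref{6-3:2024}.

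Covering numbers of a finite union are at most the sum, so
\[
\cH_\delta(\CW,L_\infty)\le \log_2 L+\max_j \cH_\delta(V_j^p,L_\infty^n),
\]
where $V_j^p$ is the $L_p$ unit ball of $V_j$. Here $L_\infty^n=L_\infty(\Omega_m)$ with $\Omega_m=\Omega$. Also $\cH_\delta=0$ once $\delta\ge R^{1/p}$, so the $\log L$ term contributes only on the range where $\epsilon t^{1/p}<R^{1/p}$.

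\textbf{Case (i), $1\le p\le2$.} By Remark~\ref{rem-ent} applied to Theorem~\ref{thm-5-2} with $M=R$ (on the finite set the sampling set is all of $\Omega$, so $\log m$ becomes $\log n$; assume $R\ge2$, and otherwise enlarge it), we get
\[
\cH_{\epsilon t^{1/p}}\le C_p(\log n)^{p/2}(\log R)^{1-p/2}R\,\epsilon^{-p}t^{-1}.
\]
Integrating $t^{-1}$ over $[c_p\epsilon,R]$ gives a factor $\log(R/\epsilon)$. The $\log L$ part integrates to at most $R\log L$. Multiplying by $\epsilon^{-5}\log(R/\epsilon)$ and using $\log(R/\epsilon)\lesssim \log R\cdot\log(1/\epsilon)$ gives
\[
m\gtrsim \epsilon^{-5-p}\log^2\tfrac1\epsilon\, R\bigl[(\log R)^{3-p/2}(\log n)^{p/2}+\log L\log R\bigr].
\]
This matches the claim.

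\textbf{Case (ii), $p>2$.} There is no $p$-version of Theorem~\ref{thm-5-2}, so I would use the crude volumetric bound \eqref{5-2}. Since $\dim V_j\le n$, this gives $\cH_\delta(V_j^p,L_\infty^n)\le n\log(1+2R^{1/p}/\delta)$, but that is too big (linear in $n$). The better route is to go through $L_2$: $\|f\|_p\le 1$ implies $\|f\|_2\le1$, so $V_j^p\subset V_j^2$. The case $p=2$ of Theorem~\ref{thm-5-2} with $M=R$ then gives
\[
\cH_\delta(V_j^p,L_\infty^n)\le C R\log n\,\delta^{-2}.
\]
With $\delta=\epsilon t^{1/p}$ the integral becomes
\[
\int_{c\epsilon}^{R^{p/2}}\bigl(R\log n\,\epsilon^{-2}t^{-2/p}+\log L\bigr)\,dt\lesssim \epsilon^{-2}R\log n\,(R^{p/2})^{1-2/p}+R^{p/2}\log L=R^{p/2}\bigl(\epsilon^{-2}\log n+\log L\bigr).
\]
Multiplying by $\epsilon^{-5}\log(R/\epsilon)$ yields $\epsilon^{-7}\log\frac1\epsilon\,R^{p/2}\log R\,[\log n+\log L]$, as required.

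The main obstacle is purely bookkeeping: we must track the cutoff where entropy vanishes, and the $\epsilon$-powers and logarithms so that they collapse to the stated $C_p(\epsilon)$. In particular, splitting $\log(R/\epsilon)$ into $\log R$ and $\log(1/\epsilon)$ requires $R\ge2$, which I would secure by replacing $R$ with $\max(R,2)$.
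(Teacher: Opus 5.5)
Your proposal is correct and follows the paper's proof essentially step for step. You apply Theorem~\ref{prL1} to $\CW_p=\bigcup_j \{f\in V_j\colon \|f\|_{L_p^n}\le 1\}$ with parameter $R$ (resp.\ $R^{p/2}$ when $p>2$), bound $\cH$ of the union by $\log L$ plus the single-space bound from Theorem~\ref{thm-5-2} via Remark~\ref{rem-ent} (for $p>2$ through $\CW_p\subset\CW_2$ and the case $p=2$), and integrate; your explicit handling of $R\ge 2$, which is needed for Theorem~\ref{thm-5-2} and for splitting $\log(R/\epsilon)$, is a detail the paper leaves implicit.
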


\begin{proof} 	Let
$\Omega_n:=\{1,2,\ldots, n\}$ and let $\mu_n$ denote the uniform distribution on $\Omega_n$. Then  the sequence of random variables $\xi^j$ satisfies the condition \eqref{6-1} with $\Omega=\Omega_n$ and $\mu=\mu_n$.		
Let
\[
\CW_p:=\Bigl\{f\in \bigcup_{j=1}^L V_j:\  \|f\|_{L_p^n}\leq 1\Bigr\}.
\]

(i) Let $1\leq p\leq 2$. The assumption \eqref{6-5b} implies that  $\|f\|_{L_\infty^n} \leq R^{1/p}\|f\|_{L_p^n}$ for all $f\in\bigcup_{j=1}^L V_j$ (see \eqref{Nik-impl}). 	
Therefore, using Theorem \ref{thm-5-2}
{(see Remark~\ref{rem-ent})}, we obtain
\begin{equation*}\label{6-7:2024} \cH_t (\CW_p,\|\cdot\|_{L_\infty^n}) \leq C_p t^{-p} R (\log R)^{1-\frac p2}(\log n)^{\frac p2}  + 2\log L,\  \ \forall t>0,
\end{equation*}
implying
\begin{align*}
\int_{c_p\epsilon}^R \cH_{\epsilon t^{1/p}} (\CW_p; \|\cdot\|_{L_\infty^n}) dt
\leq
C_p\epsilon^{-p} \Bl(\log \frac 1\epsilon \Br) R (\log R)^{2-\frac p2}(\log n)^{\frac p2} +  2 R \log L.
\end{align*}
Thus, applying Theorem \ref{prL1} to
$\CW=\CW_p$, we prove (i).

(ii)  Now assume that  $p>2$.  Then for all $f \in \bigcup_{j=1}^L V_j$, we have
$$\|f\|_{L_\infty^n} \leq \sqrt{R}\|f\|_{L_2^n} \leq R_1^{1/p}\|f\|_{L_p^n}\   \   \text{with}\  \  R_1=R^{p/2}.
$$
Since $\CW_p\subset \CW_2$, we obtain from \eqref{6-5b} that for any $t>0$,
\begin{align*}
\cH_t (\CW_p,\|\cdot\|_{L_\infty^n})&\leq \cH_t (\CW_2,\|\cdot\|_{L_\infty^n})\leq C t^{-2} R (\log n) + 2\log L,
\end{align*}
implying that
\begin{align*}
\int_{c_p\epsilon}^{R_1}&  \cH_{\epsilon t^{1/p}} (\CW_p; \|\cdot\|_{L_\infty^n}) dt \leq C R^{p/2} \Bl[ \epsilon^{-2}\log n +\log L\Br].
\end{align*}
(ii) then follows by  applying Theorem \ref{prL1} to
$\CW=\CW_p$.
\end{proof}

%

\subsection{ Preliminary discretizations}

Theorem~\ref{prL1} is often applied together with the entropy estimates stated in
Theorem~\ref{thm-5-2}, as was done in the proof of Corollary~\ref{cor-6-2a}.
This normally requires a preliminary step of discretization involving  more random points.
The following result is useful.

\begin{Proposition}\label{prop-6-2}   Let $1\leq p<\infty$ be a fixed number. Let $\{V_j\}_{j=1}^L$ be a sequence of linear subspaces of $L_\infty(\Omega)$ such that $\dim V_j\leq s$ for all $1\leq j\leq L$, and
\begin{equation} \label{6-8:2024}
\|f\|_{L_\infty(\Omega)}\leq  R^{1/p} \|f\|_{L_p(\Omega, \mu)},\   \ \forall f\in \bigcup_{j=1}^L V_j,   \end{equation}
for some constant $R\ge 1$.
{Assume that    $0< \epsilon\leq \frac 18$,  $m$ is an integer satisfying
\begin{equation}
m\ge 16 R \epsilon^{-2}\Bl[ \log L +s\log \frac 3\epsilon\Br],\label{6-8-a}
\end{equation}
and $\{\xi^j\}_{j=1}^m$ is a sequence  of IID random points drawn from the distribution $\mu$ on $\Og$. Then
the Marcinkiewicz-type discretization inequality
}

\begin{align}\label{6-9-b}
(1-5\epsilon) \|f\|_{L_p(\Omega, \mu)}^p\leq  \frac 1m \sum_{j=1}^m| f(\xi^j)|^p \leq (1+5\epsilon)\|f\|_{L_p(\Omega, \mu)}^p
\end{align}
holds for all $f\in\bigcup_{j=1}^L V_j$  with probability  $\ge 1- 2\exp\Bl(-\frac {m\epsilon^2}{ 16 R}\Br)$.
\end{Proposition}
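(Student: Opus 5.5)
The plan is a standard net argument: Bernstein's inequality for a single function, a union bound over an $\epsilon$-net in each unit sphere, and an approximation step to pass from the net to the whole sphere. The constants in \eqref{6-8-a} and \eqref{6-9-b} ($16R\epsilon^{-2}$, $s\log\frac3\epsilon$, $\log L$, the factor $5$) are exactly those this argument produces.

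\emph{Step 1: one fixed function.} Fix $f$ with $\|f\|_{L_p(\Omega,\mu)}=1$ in some $V_j$, and set $Y_k:=|f(\xi^k)|^p-1$. By \eqref{6-8:2024}, $0\le |f(\xi^k)|^p\le R$, so $|Y_k|\le R$. Since $\EE|f|^{2p}\le R\,\EE|f|^p=R$, we also have $\EE Y_k^2\le R$. Bernstein's inequality then gives
\[
\PP\Bigl[\Bigl|\frac1m\sum_{k=1}^m Y_k\Bigr|\ge \epsilon\Bigr]\le 2\exp\Bigl(-\frac{m\epsilon^2}{2R+\frac23 R\epsilon}\Bigr)\le 2\exp\Bigl(-\frac{m\epsilon^2}{4R}\Bigr).
\]

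\emph{Step 2: nets and a union bound.} For each $j$, the unit sphere of $V_j$ in $L_p(\Omega,\mu)$ has an $\epsilon$-net $\mathcal N_j$ in the $L_p(\mu)$ norm with $|\mathcal N_j|\le (3/\epsilon)^s$, by the volumetric bound \eqref{5-2}. A union bound over at most $L(3/\epsilon)^s$ net points shows that the inequality of Step 1 holds for all net points simultaneously, except on an event of probability at most
\[
2\exp\Bigl(\log L+s\log\frac3\epsilon-\frac{m\epsilon^2}{4R}\Bigr).
\]
Under \eqref{6-8-a}, $\log L+s\log\frac3\epsilon\le \frac{m\epsilon^2}{16R}$, so the exponent is at most $\frac{m\epsilon^2}{16R}-\frac{m\epsilon^2}{4R}\le -\frac{m\epsilon^2}{16R}$. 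This gives exactly the stated probability bound.

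\emph{Step 3: extension to the whole sphere (the main obstacle).} Outside the exceptional event, $\|g\|_{L_p(\Omega_m)}^p\in[1-\epsilon,1+\epsilon]$ for every net point $g$. Since $\|\cdot\|_{L_p(\Omega_m)}$ is only a seminorm on $V_j$, I will run the usual bootstrapping argument on
\[
A:=\sup\bigl\{\|f\|_{L_p(\Omega_m)}\colon f\in V_j,\ \|f\|_{L_p(\Omega,\mu)}=1\bigr\},
\]
which is finite because $\dim V_j<\infty$.

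Given $f$ on the sphere, choose $g\in\mathcal N_j$ with $\|f-g\|_{L_p(\mu)}\le\epsilon$. By homogeneity and the definition of $A$, $\|f-g\|_{L_p(\Omega_m)}\le A\epsilon$. The triangle inequality then gives
\[
\|f\|_{L_p(\Omega_m)}\le (1+\epsilon)^{1/p}+A\epsilon\le 1+\epsilon+A\epsilon,
\]
hence $A\le \frac{1+\epsilon}{1-\epsilon}\le 1+\frac{16}{7}\epsilon$ for $\epsilon\le\frac18$. For the lower bound,
\[
\|f\|_{L_p(\Omega_m)}\ge (1-\epsilon)^{1/p}-A\epsilon\ge 1-\epsilon-A\epsilon\ge 1-\frac{16}{7}\epsilon .
\]
It remains to raise these bounds to the $p$-th power. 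For large $p$, $(1\pm c\epsilon)^p$ is not within $1\pm5\epsilon$, so the approximation should be done directly on $p$-th powers rather than on norms. I would use the elementary inequality
\[
\bigl||a|^p-|b|^p\bigr|\le \delta|a|^p+C(p,\delta)|a-b|^p .
\]
Alternatively, and more cleanly, I would choose the net at scale $\epsilon/p$ or use a geometric-series chaining $f=\sum_k g_k$ with $\|g_k\|\le\epsilon^k$. I expect that some such adjustment, absorbed into the $\log\frac3\epsilon$ factor, is what yields the constant $5\epsilon$, and getting these $p$-dependent constants to come out as stated is the delicate part of the proof.
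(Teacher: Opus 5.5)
Your Steps 1 and 2 are the paper's argument. The paper applies the Bernstein-type bound of Lemma~\ref{lem-2-2} (with $M_1=2$, $M_\infty=R$, giving tail $2e^{-m\epsilon^2/(8R)}$; your $4R$ is slightly better) to $|f(\xi^j)|^p/\|f\|_p^p-1$. It does so over a union of $\epsilon$-nets of total size at most $L(3/\epsilon)^s$, and uses \eqref{6-8-a} exactly as you do. Your bootstrapping on $A$ is the proof of Lemma~\ref{lem-1-2}, which the paper invokes to pass from the net to all of $V_j$. With the net taken on the sphere you get the slightly sharper bound $1\pm\frac{16}{7}\epsilon$ for $\|f\|_{L_p(\Omega_m)}$. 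For $p=1$, and for $p$ up to roughly $1.9$, this already gives \eqref{6-9-b}.

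For larger $p$ you leave the last step open, and your expectation that the fix is ``absorbed into $\log\frac3\epsilon$'' is wrong. A norm-level bound $1\pm c\epsilon$ gives only $1\pm O(p\epsilon)$ for $p$-th powers. None of your remedies avoids this:
\begin{itemize}
\item the elementary inequality yields a $p$-dependent multiple of $\epsilon$;
\item chaining cannot remove the first-order norm error already present in the first link;
\item an $(\epsilon/p)$-net does work, since your bootstrapping then gives $A^p\le(1+\epsilon)(1-\epsilon/p)^{-p}\le 1+3\epsilon$ and a matching lower bound, but it replaces $s\log\frac3\epsilon$ by $s\log\frac{3p}\epsilon$ in \eqref{6-8-a}.
\end{itemize}
Some such $p$-dependence is unavoidable. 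Take $L=1$ and $V_1=\spn\{\cos x,\sin x\}$ on the circle, so $s=2$ and $R=\|\cos\|_{L_p}^{-p}\asymp\sqrt p$. With $m/R$ fixed, as at the threshold \eqref{6-8-a}, each of $\asymp\sqrt p$ disjoint pairs of antipodal arcs of length $\asymp p^{-1/2}$ contains no sample point with probability bounded below independently of $p$. Points outside such a pair contribute on average only a small fraction of $\|\cos\|_{L_p}^p$. A second-moment count of empty pairs then shows that \eqref{6-9-b} fails for some $\cos(\cdot-\theta)$ with probability tending to $1$ as $p\to\infty$, while the claimed success probability stays above $1-2(\epsilon/3)^2$.

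The paper's proof has the same defect and does not supply the missing idea. Lemma~\ref{lem-1-2} with $\alpha=(1-\epsilon)^{1/p}$ and $\beta=(1+\epsilon)^{1/p}$ gives, after raising to the $p$-th power, the bounds $(1+\epsilon)(1+2\epsilon)^p$ and $(\alpha-3\beta\epsilon)^p$. These lie in $[1-5\epsilon,1+5\epsilon]$ at $p=1$, which is where the constant $5$ comes from. They do not at $p=2$, where the lower bound is $1-7\epsilon+O(\epsilon^2)$. So the obstacle you flagged is a genuine gap in the statement as written. The corrected hypothesis, with $s\log\frac{3p}{\epsilon}$ in \eqref{6-8-a}, is harmless for the paper's uses such as Theorem~\ref{IT1}, where $p$ is fixed.
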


For the proof of Proposition \ref{prop-6-2}, we need the following two lemmas.  { The first lemma appears  in \cite[Lemma 2.1]{BLM}, but also follows directly from the classical Bernstein inequality
(see~\cite[Theorem~4.22]{VTbookGreedy}).}

\begin{Lemma}[{\cite[Lemma 2.1]{BLM}}]\label{lem-2-2}
	
	Let $\{\eta_j\}_{j=1}^m$ be an independent sequence of  random variables such that $\EE \eta_j=0$, $\EE |\eta_j|\leq M_1$ and $|\eta_j|\leq M_\infty$ almost surely for all $1\leq j\leq m$ and some constants $M_1, M_\infty>0$.  Then for any {$0<\lambda<M_1$},
	\begin{equation*}
\PP\Bl[ 	| \sum_{j=1}^m \eta_j| \ge m\lambda\Br] \leq 2 e^{-\frac{m\lambda^2}{ 4M_1M_\infty}}.
	\end{equation*}

\end{Lemma}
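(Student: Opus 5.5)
The plan is the standard exponential-moment (Chernoff–Bernstein) argument. Independence lets the moment generating function factor, and the two moment hypotheses together control each factor. Set $S:=\sum_{j=1}^m \eta_j$. By symmetry (the hypotheses are invariant under $\eta_j\mapsto -\eta_j$), it suffices to show
\[
\PP[S\ge m\lambda]\le e^{-\frac{m\lambda^2}{4M_1M_\infty}},
\]
and then apply the same bound to $-S$ and take a union bound, which produces the factor $2$.

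\textbf{Step 1 (variance from the two moment bounds).} Since $|\eta_j|\le M_\infty$ almost surely,
\[
\EE\eta_j^2\le M_\infty\,\EE|\eta_j|\le M_1M_\infty .
\]
This is the only place the $L_1$ bound $M_1$ enters. It is what gives the product $M_1M_\infty$, instead of $M_\infty^2$, in the exponent.

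\textbf{Step 2 (moment generating function).} I will use the elementary inequality $e^x\le 1+x+x^2$ for $|x|\le 1$. Take $t>0$ with $tM_\infty\le 1$. Then $|t\eta_j|\le 1$ almost surely, and $\EE\eta_j=0$ gives
\[
\EE e^{t\eta_j}\le 1+t^2\EE\eta_j^2\le 1+t^2M_1M_\infty\le e^{t^2M_1M_\infty}.
\]
By independence, $\EE e^{tS}\le e^{mt^2M_1M_\infty}$.

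\textbf{Step 3 (Markov and optimization).} Markov's inequality gives
\[
\PP[S\ge m\lambda]\le e^{-tm\lambda}\,\EE e^{tS}\le \exp\bigl(-tm\lambda+mt^2M_1M_\infty\bigr).
\]
The exponent is minimized at $t=\frac{\lambda}{2M_1M_\infty}$, where its value is $-\frac{m\lambda^2}{4M_1M_\infty}$. The only point needing care is admissibility of this $t$, i.e. $tM_\infty\le 1$. Here the hypothesis $0<\lambda<M_1$ is exactly what is needed, since
\[
tM_\infty=\frac{\lambda}{2M_1}<\tfrac12\le 1 .
\]
So this $t$ lies in the allowed range and the claimed bound follows.

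I do not expect a genuine obstacle. The one point requiring attention is choosing the Taylor estimate in Step 2 so that the constant $4$ in the exponent comes out exactly. The inequality $e^x\le 1+x+x^2$ on $[-1,1]$ is sharp enough for this; it follows from $\sum_{k\ge 2}1/k!=e-2<1$.
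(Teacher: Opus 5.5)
Your argument is correct and complete, and it follows essentially the route the paper indicates. The paper gives no proof of its own: it cites \cite[Lemma 2.1]{BLM} and remarks that the lemma follows from the classical Bernstein inequality, which rests on exactly your key observation $\EE\eta_j^2\le M_\infty\EE|\eta_j|\le M_1M_\infty$ plus a Chernoff bound. The only difference is that you reprove the Bernstein bound in the restricted regime, using $\lambda<M_1$ to keep $tM_\infty\le 1$, whereas quoting Bernstein's inequality $2\exp\bigl(-\frac{m\lambda^2}{2M_\infty(M_1+\lambda/3)}\bigr)$ uses $\lambda<M_1$ to absorb the $\lambda/3$ term instead, which yields the constant $8/3$ in place of $4$.
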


{ A slight  variant of the second lemma  also appears in \cite[Lemma 2.5]{BLM}.}
\begin{Lemma}[{\cite[Lemma 2.5]{BLM}} ] \label{lem-1-2}
		Let $T: X\to Y$ be a bounded linear operator between two normed linear spaces $(X, \|\cdot\|_X)$ and $(Y,\|\cdot\|_Y)$.  Let $\epsilon\in (0, \frac12)$.  Assume that there exists an $\epsilon$-net $\cA\subset B_X$  of   $ B_X$  such that for some constants  $\al, \beta>0$,
	$$ \al \|x\|_X \leq \|T x\|_Y\leq \beta \|x\|_X,\   \   \  \forall x\in \mathcal{A}.$$
	Then for any $z\in X$, we have
	\begin{equation*}\al(1-\ga \epsilon) \|z\|_X\leq \|Tz\|_Y\leq \beta(1+2\epsilon) \|z\|_X\     \    \text{with}\   \  \ga=\frac {3\beta}\al. \end{equation*}
\end{Lemma}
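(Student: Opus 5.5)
The plan is the standard successive-approximation argument for nets. By homogeneity of both sides in $z$, it suffices to treat $z \in B_X$, and in fact $\|z\|_X = 1$; the case $z = 0$ is trivial. If $X = \{0\}$ there is nothing to prove. Otherwise $\mathcal{A}$ contains some $x \neq 0$: a point of $B_X$ with norm $1$ is within $\epsilon < 1$ of some point of $\mathcal{A}$, which therefore has norm $\ge 1-\epsilon > 0$. Applying the hypothesis to this $x$ gives $\al \le \beta$, which I will need at the end.

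\emph{Upper bound.} Let $S := \sup_{z \in B_X} \|Tz\|_Y$, which is finite because $T$ is bounded. Given $z \in B_X$, choose $x \in \mathcal{A}$ with $\|z-x\|_X \le \epsilon$. Then $z - x \in \epsilon B_X$, so
\[
\|Tz\|_Y \le \|Tx\|_Y + \|T(z-x)\|_Y \le \beta \|x\|_X + \epsilon S \le \beta + \epsilon S .
\]
Taking the supremum over $z$ gives $S \le \beta + \epsilon S$, hence $S \le \beta/(1-\epsilon)$. Since $\epsilon < \frac12$, we have $\frac{1}{1-\epsilon} \le 1 + 2\epsilon$; indeed this is equivalent to $1 \le 1 + \epsilon - 2\epsilon^2$, i.e.\ $\epsilon(1-2\epsilon) \ge 0$. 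Homogeneity then yields $\|Tz\|_Y \le \beta(1+2\epsilon)\|z\|_X$ for all $z \in X$. Alternatively, one could iterate the approximation to write $z = \sum_k a_k x_k$ with $x_k \in \mathcal{A}$ and $|a_k| \le \epsilon^k$. The supremum trick avoids the convergence bookkeeping, so I would use it.

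\emph{Lower bound.} Take $\|z\|_X = 1$ and $x \in \mathcal{A}$ with $\|z-x\|_X \le \epsilon$, so that $\|x\|_X \ge 1-\epsilon$. Using the upper bound just proved,
\[
\|Tz\|_Y \ge \|Tx\|_Y - \|T(z-x)\|_Y \ge \al(1-\epsilon) - \beta(1+2\epsilon)\epsilon \ge \al - \al\epsilon - 2\beta\epsilon ,
\]
where the last step uses $1 + 2\epsilon \le 2$. Since $\al \le \beta$, we have $\al\epsilon + 2\beta\epsilon \le 3\beta\epsilon$. Therefore $\|Tz\|_Y \ge \al\bigl(1 - \frac{3\beta}{\al}\epsilon\bigr) = \al(1-\ga\epsilon)$, and homogeneity extends this to all $z \in X$.

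The only delicate point is the constant bookkeeping in the lower bound. That step needs the upper estimate on $\|T(z-x)\|_Y$, which is why the upper bound must be proved first, and it needs the observation $\al \le \beta$ to absorb the $\al\epsilon$ term into $3\beta\epsilon$.
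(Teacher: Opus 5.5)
Your proof is correct and follows the paper's argument essentially step for step. The paper likewise bounds $\|Tz\|_Y \le \|T\|\epsilon + \beta$ on $B_X$ to get $\|T\| \le \beta/(1-\epsilon) \le \beta(1+2\epsilon)$, then derives $\alpha(1-\epsilon) - 2\beta\epsilon \ge \alpha - 3\beta\epsilon$ for unit vectors; your explicit check that $\alpha \le \beta$ (via a nonzero element of $\mathcal{A}$) justifies this last inequality, which the paper uses without comment.
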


For completeness, we provide a proof of Lemma \ref{lem-1-2} below.
\begin{proof}
For  $z\in B_X$, we denote by  $\varphi(z)$ an element in $\mathcal{A}$  such that $\|z-\varphi(z)\|_X\leq \epsilon$.  Then  for any $z\in B_X$, we have
\begin{align*}
\|Tz\|_Y \leq \|T\| \epsilon +\|T \varphi(z)\|_Y \leq \|T\|\epsilon +\beta \|\varphi(z)\|_X \leq \|T\|\epsilon + \beta,
\end{align*}
which, taking supremum over all $z\in B_X$,  implies
\[ \|T\|\leq \frac \beta {1-\epsilon}\leq \beta(1+2\epsilon).\]

To prove the left-hand-side estimate, we note that for any $z \in X$ with $\|z\|_X = 1$, we have
\begin{align*}
\|Tz\|_Y& \ge \|T\varphi(z)\|_Y -\|T\| \epsilon \ge \al \|\varphi(z)\|_X -\|T\|\epsilon\\
& \ge \al(1-\epsilon) -2\beta\epsilon\ge \al-3\beta \epsilon.
\end{align*}
{This proves the lemma.}
\end{proof}

\begin{proof}[Proof of Proposition \ref{prop-6-2}  ]    By inequality \eqref{5-2},	
	for each $1\leq j\leq L$,
 there exists  an $\epsilon$-net    $\cA_j\subset V_j^p$  of $V_j^p:=V_j\cap B_{L_p}$ in the space $L_p$  such that
	$|\cA_j| \leq \bigl(1+\frac 2\epsilon\bigr)^v.$
	Let $\cA:=\bigcup_{j=1}^L  \cA_j.$
	Then
	\begin{align}
	\log |\cA| &\leq \log\Bl[L \Bl(1+\frac 2\epsilon\Br)^v\Br]\leq  \log L+ v \log  \frac {3} {\epsilon} .\label{6-12:eq}
	\end{align}
	{Furthermore,  \eqref{6-8:2024} implies that $$ \f {\|f\|_{\infty}^p}{\|f\|_{p}^p} =\Bigg\| \f {|f|^p}{\|f\|_{p}^p}\Bigg\|_\infty \leq R\   \ \text{ for all $ f\in \cA$,}$$
where $\|f\|_\infty=\sup_{x\in\Og}|f(x)|$ and $\|f\|_p=\|f\|_{L_p(\Og,\mu)}$.
Thus,  using \eqref{6-12:eq}, and   Lemma \ref{lem-2-2} with $M_\infty=R$, $M_1=2$ and
$$\eta_j=\f{| f(\xi^j) |^p}{\|f\|_p^p}-1,\   \ j=1,2,\cdots, m,$$
 we  conclude   that the inequalities}
	\begin{equation}\label{6-13:2024} \Bl| \frac 1 m \sum_{j=1}^m| f(\xi^j) |^p- \int_{\Omega} |f|^p\, d\mu\Br| \leq \epsilon\|f\|_{L_p(\Omega, \mu)}^p,\   \ \forall f\in \cA\end{equation}
	hold  with probability
\begin{align*}
\ge &1-2|\cA|\exp\Bl(- \frac {m \epsilon^2} {8R}\Br)=1-2\exp\Bl( \log |\cA|- \frac {m \epsilon^2} {8R}\Br)\\
\ge &1-2\exp\Bl(\log L +v\log \frac 3\epsilon-\frac {m \epsilon^2} {8R}\Br).
\end{align*}
However,  the condition \eqref{6-8-a} implies
\[ \log L +v\log \frac 3\epsilon\leq \frac {m \epsilon^2} {16R}.\]
It follows  that
 \eqref{6-13:2024}
  holds  for all $f\in\cA$
  with probability
	\[ \ge 1- 2\exp\Bl(-\frac {m \epsilon^2} {16R}\Br).\]
	To complete the proof, we just need to note that by Lemma \ref{lem-1-2}, \eqref{6-13:2024} implies \eqref{6-9-b}.

\end{proof}

\subsection{Sampling discretization of $L_p$ norms }

{
In this subsection,  we show how to use the preliminary discretization from the last subsection,
together with Corollary~\ref{cor-6-2a}, to deduce a Marcinkiewicz-type discretization theorem.
}

\begin{Theorem}\label{IT1} Let $X_N$ be an $N$-dimensional space of bounded functions on $\Omega$, and assume that
$X_N\in \textnormal{NI}_{2, \infty}(\sqrt{KN})$	
for some constant $K\ge 2$.
Let $\{\xi^j\}_{j=1}^\infty$ be  a sequence of IID
random points   drawn  from the distribution  $\mu$.
Let $\epsilon\in (0, 1/2)$.

\textnormal{(i)} If $1\leq p\leq 2$, then there  exists a constant $C_p(\epsilon)\ge 1$ such that for any given parameter $A\ge 1$, and any integer
\begin{equation} \label{6-14}
m \ge  C_p(\epsilon)  A \cdot  KN \cdot \log^3(2KN ),
\end{equation}
{
the Marcinkiewicz-type discretization inequality
\[
(1-\epsilon) \|f\|_{L_p(\Omega, \mu)}^p \leq \frac  1m \sum_{j=1}^m |f(\xi^j)|^p\leq (1+\epsilon) \|f\|_{L_p(\Omega, \mu)}^p
\]
}
holds for all $f\in X_N$  with probability $\ge 1- (KN)^{-A}$.
	
\textnormal{(ii)}  If $2<p<\infty$, then there  exists a constant $C_p(\epsilon)\ge 1$ such that for any given parameter $A\ge 1$, and any integer
$$
m \ge  C_p(\epsilon)  A \cdot  (KN)^{p/2} \cdot \log^2(2KN ),
$$
{
the Marcinkiewicz-type discretization inequality
\[
(1-\epsilon) \|f\|_{L_p(\Omega, \mu)}^p \leq \frac  1m \sum_{j=1}^m |f(\xi^j)|^p\leq (1+\epsilon) \|f\|_{L_p(\Omega, \mu)}^p
\]
}
holds for all $f\in X_N$  with probability $\ge 1- e^{-A}$.	
\end{Theorem}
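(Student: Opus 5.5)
Following the announcement at the start of this subsection, the plan is a two-step argument: first a preliminary discretization with many points, then a reduction to the finite setting of Corollary~\ref{cor-6-2a}. Fix $\epsilon\in(0,1/2)$.

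\textbf{Step 1 (preliminary discretization).} Let $R_0:=(KN)^{p/2}$ when $p\ge 2$ and $R_0:=KN$ when $1\le p\le 2$. By \eqref{1-7}, or directly for $p\ge2$ from $\|f\|_p\ge\|f\|_2$, this gives $\|f\|_{L_\infty(\Omega)}\le R_0^{1/p}\|f\|_{L_p(\Omega,\mu)}$ on $X_N$. Apply Proposition~\ref{prop-6-2} with $L=1$, $s=N$ and accuracy $\epsilon/20$. This yields that, with probability $\ge 1-2\exp(-cn\epsilon^2/R_0)$, IID points $\eta^1,\dots,\eta^n$ drawn from $\mu$, with $n\asymp \epsilon^{-2}R_0 N\log(3/\epsilon)$ (or larger), give a $(1\pm\epsilon/4)$ discretization of the $L_p$ norm on $X_N$. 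We also need the $L_2$ norm discretized, to transfer the Nikol'skii condition, so we apply the proposition for $p=2$ as well with $R=KN$. On the intersection of these events, the restriction $V:=\{(f(\eta^i))_{i=1}^n: f\in X_N\}\subset\RR^n$ satisfies
\[
\|v\|_{L_\infty^n}\le\|f\|_\infty\le\sqrt{KN}\|f\|_2\le 2\sqrt{KN}\,\|v\|_{L_2^n}.
\]
Thus \eqref{6-5b} holds with $R=4KN$ and $L=1$. Here $n$ is polynomial in $KN$, so $\log n\lesssim \log(2KN)+\log(1/\epsilon)$.

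\textbf{Step 2 (subsampling).} Conditionally on the $\eta$'s, let $\zeta^1,\dots,\zeta^m$ be IID uniform on $\{1,\dots,n\}$. Corollary~\ref{cor-6-2a}(i) with $R=4KN$ and $\log n\lesssim\log(KN)$ gives the $(1\pm\epsilon/4)$ discretization of $\|v\|_{L_p^n}$ once $m\ge C_p(\epsilon)KN\log^3(2KN)$. Part (ii) does the same with $m\ge C_p(\epsilon)(KN)^{p/2}\log^2(2KN)$. In both cases the failure probability is $\exp(-c m\epsilon^4/(R'\log^2(R'/\epsilon)))$, with $R'=4KN$ in (i) and $R'=(4KN)^{p/2}$ in (ii). With the extra factor $A$ in $m$, this is at most $(KN)^{-A}/3$ in (i) and $e^{-A}/3$ in (ii).

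\textbf{Step 3 (combine).} The composed points $\xi^j:=\eta^{\zeta^j}$ are IID with law $\mu$, because uniformly resampling from an IID sample gives marginals $\mu$. However, they are not independent: repetitions occur. So I would not claim the composition is literally an IID $\mu$-sample. Instead, I would argue existentially, or better, with the conditional probability argument: the probability of the good event for the composed scheme is at least $1-P(\text{Step 1 fails})-P(\text{Step 2 fails}\mid\eta)$. Chaining the two-sided bounds $(1\pm\epsilon/4)^2$ gives $1\pm\epsilon$.

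\textbf{Main obstacle.} The theorem asserts the statement for a genuinely IID sequence $\xi^j\sim\mu$, not for the composed scheme. The better route is to avoid subsampling and apply Theorem~\ref{prL1} directly to $\CW=X_N^p$ with the points $\xi^j$. This requires entropy bounds in $L_\infty(\Omega)$, whereas Theorem~\ref{thm-5-2} only gives them in $L_\infty(\Omega_m)$ on a finite set. To bridge this, I would use a symmetrization and conditioning step. Conditioned on the realized points $\xi^1,\dots,\xi^m$, the chaining inside the proof of Theorem~\ref{prL1} only needs entropy in $L_\infty(\{\xi^1,\dots,\xi^m\})$, and there Theorem~\ref{thm-5-2} applies with $\log m\lesssim\log(KN)$ and $M=KN$ (respectively $R_0$). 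Justifying that the conditional-on-sample version of the chaining bound holds is where I expect the real work to lie. Given that, the integral in \eqref{6-3:2024} is $\lesssim\epsilon^{-p}KN\log^2(KN)$, times $\log(R/\epsilon)$, which gives $KN\log^3$ in (i) and $(KN)^{p/2}\log^2$ in (ii).
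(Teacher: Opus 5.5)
\textbf{There is a genuine gap, and it is the one you flag yourself.} Your architecture matches the paper's: Proposition~\ref{prop-6-2} for both the $L_p$ and $L_2$ norms on a large preliminary sample, transfer of the Nikol'skii constant to that finite set, then Corollary~\ref{cor-6-2a}. But the proof does not establish the theorem. With $\zeta^j$ uniform on all of $\{1,\dots,n\}$, two points $\eta^{\zeta^j}$ coincide with probability $\ge 1/n$, so they are not an IID $\mu$-sample. An existential statement, or one about the ``composed scheme'', is not what the theorem asserts. Your proposed way out is to rerun the chaining of Theorem~\ref{prL1} conditionally on the sample, with entropy in $L_\infty(\{\xi^1,\dots,\xi^m\})$. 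This does not follow from Theorem~\ref{prL1} as stated, which needs entropy in $L_\infty(\Omega)$, and you give no argument for it.

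\textbf{The missing idea is a block construction.} This is why Corollary~\ref{cor-6-2a} is stated under the averaged hypothesis $\frac1m\sum_j\PP[\xi^j\in E]=|E|/n$ rather than for identically distributed indices. The construction is:
\begin{itemize}
\item Take the preliminary sample $x_1,\dots,x_{m_1}$ of size $m_1=m\ell_N$, where $\ell_N$ is the least integer $\ge CKN^2\epsilon^{-2}\log\frac1\epsilon$.
\item Split $\{1,\dots,m_1\}$ into disjoint blocks $\Lambda_1,\dots,\Lambda_m$ with $|\Lambda_j|=\ell_N$.
\item Let $n_1,\dots,n_m$ be independent, with $n_j$ uniform on $\Lambda_j$ and independent of the $x_i$, and set $\xi^j:=x_{n_j}$.
\end{itemize}
Each $\xi^j$ depends only on $n_j$ and on the block $(x_i)_{i\in\Lambda_j}$, so the $\xi^j$ are genuinely IID with law $\mu$. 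Conditionally on the $x_i$, the indices $n_j$ are independent and satisfy $\frac1m\sum_j\PP[n_j\in E]=|E|/m_1$. Hence Corollary~\ref{cor-6-2a} applies on $\Omega_{m_1}$ with $R=2KN$, and $\PP(E)\ge\EE\bigl[\chi_{E_1}\EE(\chi_E\mid x_1,\dots,x_{m_1})\bigr]$ finishes the argument.

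\textbf{Tying $m_1$ to $m$ also repairs your Step 1.} With $n\asymp\epsilon^{-2}R_0N\log\frac3\epsilon$ fixed, the preliminary failure probability is only of order $e^{-cN\log(3/\epsilon)}$. This does not beat $(KN)^{-A}$ for large $A$. With $m_1=m\ell_N$ it is at most $4e^{-mN}$. To keep $\log m_1\lesssim_\epsilon\log(KN)$ for arbitrarily large $m$, the paper first disposes of the range $m\ge CKN^2\epsilon^{-2}\log\frac1\epsilon$ by applying Proposition~\ref{prop-6-2} directly with $L=1$, $s=N$, $R=KN$.
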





\begin{proof}

We start with the proof of (i), when $1\leq p\leq 2$. The assumption $X_N\in \textnormal{NI}_{2, \infty}(\sqrt{KN})$ implies (see \eqref{1-7}) that
\[\|f\|_{L_\infty(\Omega)} \leq (KN)^{1/p}\|f\|_{L_p(\Omega, \mu)},\   \ f\in X_N.\]
Thus, without loss of generality, we may assume that
$
m \le C K N^2 \epsilon^{-2} \log \frac{1}{\epsilon},
$
where $C>0$ is a sufficiently large fixed constant, since otherwise
Theorem~\ref{IT1} follows directly from Proposition~\ref{prop-6-2} with
$L=1$, $s=N$, and $R=KN$.
We will use Corollary \ref{cor-6-2a} (i).
To this end, we need a step of preliminary discretization involving more points.

Let $\ell_N$ denote the smallest integer $\ge C KN^2\epsilon^{-2}  \log \frac 1\epsilon$.  Let $m_1:= m\ell_N$, and let $\Omega_{m_1}:=\{x_1, \ldots, x_{m_1}\}\subset \Omega$ be a set of IID random points drawn from the distribution   $\mu$. Note that
{
\[
C_1(C, \epsilon)\log (KN)\le \log m_1\le C_2(C, \varepsilon) \log (KN)
\]
for some constants $C_2(C, \epsilon)\ge C_1(C, \epsilon)>0$}. Furthermore, by Proposition  \ref{prop-6-2}, the following  inequalities
hold simultaneously with probability $\ge 1-4\exp \Bl(-\frac {cm_1\epsilon^2}{K N}\Br)\ge 1-4 e^{-mN}$, {for a sufficiently large constant $C>0$},
\begin{align}
&	\Bl| \|f\|_{L_p(\Omega, \mu)}^p  - \|f\|_{L_p(\Omega_{m_1})}^p \Br|\leq \frac \epsilon {10} \|f\|_{L_p(\Omega, \mu)}^p,\   \ \forall f\in X_N, \label{6-15:2024}\\
&	\Bl| \|f\|_{L_2(\Omega, \mu)}^2  - \|f\|_{L_2(\Omega_{m_1})}^2 \Br|\leq \frac \epsilon {10} \|f\|_{L_2(\Omega, \mu)}^2,\   \ \forall f\in X_N. \label{6-16:2024}
\end{align}	
Let $\{ \Lambda_1,\ldots, \Lambda_m\}$ be a partition of the set $\{1,\ldots, m_1\}$ such that $|\Lambda_j|=\ell_N$ for $1\leq j\leq m$.
Let $n_1,\ldots, n_m$ be  an independent sequence of  random variables that is independent of $\{ x_1,\ldots, x_{m_1}\}$ 	
such   that each $n_j$ is uniformly distributed in  $\Lambda_j$. Define
$\xi^j =x_{n_j}$,\  \  $j=1,2,\ldots, m$. Then  $\xi^1, \ldots, \xi^m\in\Omega$ are IID random points with common distribution   $\mu$.

Next, we  fix a set $\Omega_{m_1}$ of  random points $x_1, \ldots, x_{m_1}$ for which  both \eqref{6-15:2024} and \eqref{6-16:2024}  hold. Instead of dealing with  functions on the entire set $\Omega$, we consider their restrictions on the set  $\Omega_{m_1}$ equipped with the uniform distribution.
From the assumption $X_N\in \textnormal{NI}_{2, \infty}(\sqrt{KN})$
and by \eqref{6-16:2024}, we have
\[ \|f\|_{L_\infty(\Omega_{m_1})}\leq \sqrt{2KN} \|f\|_{L_2(\Omega_{m_1})},\  \ \forall f\in X_N.\]
Thus, applying  Corollary \ref{cor-6-2a} (i)  with $L=1$, $n=m_1$ and $R=2KN$,   we conclude that
the inequalities
\begin{equation}\label{6-17:2024}
\Bl| \frac 1m \sum_{j=1}^m |f({x_{n_j}})|^p -\|f\|_{L_p(\Omega_{m_1})}^p \Br|		
\leq  \frac \epsilon {10} \|f\|_{L_p(\Omega_{m_1})}^p  ,\  \ \forall f\in X_N
\end{equation}
hold   with probability
\[\geq 1-  \exp\Bl( -\frac {c_p(\epsilon) m } { KN  \log^2(KN)}\Br)\]
provided that
\[ m\ge C_p(\epsilon) KN  \log^3 (KN).\]
Now, we recall that $\xi^j=x_{n_j}$.
Clearly,
\eqref{6-17:2024} combined with \eqref{6-15:2024} yields that
\begin{equation}\label{6-18:2024}
(1-\epsilon) \|f\|_{L_p(\Omega,\mu)}^p \leq \frac 1m \sum_{j=1}^m |f(\xi^j)|^p  \leq (1+\epsilon) \|f\|_{L_p(\Omega,\mu)}^p,\  \ \forall f\in X_N.
\end{equation}

To conclude the proof,  we estimate the probability of the event  $E$  that \eqref{6-18:2024} holds.
Let $E_1$ denote the event that  both  \eqref{6-15:2024} and \eqref{6-16:2024}  hold. Note that $\chi_{E_1}$  is a function of $(x_1, x_2, \ldots, x_{m_1})$, and $\PP(E_1) \ge  1-4e^{-mN}$.
 Moreover, from the above proof,
 we have
\begin{align*}
	 \chi_{E_1} \cdot \EE[ \chi_E |x_1, \ldots, x_{m_1}]
&\ge \chi_{E_1}\cdot \left( 1-  \exp\Bl( -\frac {c_p(\epsilon) m} { KN  \log^2 (KN)}\Br)\right),
\end{align*}
implying that
\begin{align*}
&	\EE\Bl[\chi_{E_1} \cdot \EE [ \chi_{E} |x_1,\ldots, x_{n_1}]\Br]\ge \PP(E_1) \cdot   \left( 1-   \exp\Bl( -\frac {c_p(\epsilon) m} { KN  \log^2 (KN)}\Br)\right) \\
	&\ge   1-  5  \exp\Bl( -\frac {c_p(\epsilon) m} { KN  \log^2 (KN)}\Br).
\end{align*}
Thus, if $m$ is an integer satisfying \eqref{6-14} for some $A\in\NN$, then
\begin{align*}
\PP (E)& \ge \EE[ \chi_{E\cap E_1}]=
\EE\Bl[\chi_{E_1} \cdot \EE [ \chi_{E} |x_1,\ldots, x_{n_1}]\Br]\\
&\ge 1-5 \exp\Bl( -C_p(\epsilon) c_p(\epsilon) A \log (KN)\Br) \ge 1-(KN)^{-A}
\end{align*}
{provided that the constant $C_p(\varepsilon)>0$ is  sufficiently large.}
This completes the proof of (i).

The proof of (ii)  is almost identical,  using Corollary \ref{cor-6-2a} (ii) and the fact that    $$
\|f\|_{L_2(\Omega, \mu)}\leq \|f\|_{L_p(\Omega, \mu)}
$$
for $p>2$.
\end{proof}

\subsection{Sampling discretization  with weights }

The constant $K$ in the $(2, \infty)$-Nikol'skii condition $\textnormal{NI}_{2, \infty}(\sqrt{KN})$ can grow substantially  as the dimension of $X_N$ increases. For instance, if $X_N$ is the space   of all algebraic polynomials of degree less  than $N$ on the interval $[-1,1]$ and $\mu$ is the probability measure on $[-1,1]$ given by { $$d\mu(x)=\f {\Ga(\ld+1)}{\Ga(\ld+\f12) \sqrt{\pi}}(1-x^2)^{\lambda-\frac12}\, dx$$} for some constant  $\lambda>0$, then $X_N\in \textnormal{NI}_{2, \infty}(\sqrt{KN})$ with the least constant $K\sim N^{2\lambda}$ (see, e.g.,  \cite[(7.13), (7.14)]{Ma-To}).  In such instances,  Theorem \ref{IT1} requires a considerably larger number of  random points for effectively discretizing  $L_p$ norms.

Similar to the case of $L_2$-discretization, a change of density argument allows us
to apply Theorem~\ref{IT1} to establish a weighted discretization theorem that does not
require the $(2,\infty)$--Nikol'skii condition.
The proof is similar to that of Corollary \ref{cor-3-1} for $L_2$-discretization. It relies on  the following  Lewis' change of density lemma:

\begin{Lemma}[{\cite[Lemma 7.1]{BLM}}, {\cite{Lew}}]\label{lem-6-2}
Given any $1\leq p<\infty$ and any $N$-dimensional subspace $X_N$ of  $L_p(\Omega, \mu)$,  there exists a basis $\{u_j\}_{j=1}^N$ of $X_N$ such that the function $ F:=\bigl(\frac 1N \sum_{j=1}^N |u_j| ^2\bigr)^{\frac12}$ has the following properties:   $\|F\|_{L_p(\Omega, \mu)}=1$,  and
for all scalars $\{\lambda_j\}_{j=1}^N\subset \RR$,
	\begin{equation}\label{6-19}
		\int_{\Omega} \Bl|\sum_{j=1}^N \lambda_j u_j(x)\Br|^2
		F(x)^{p-2}\, d\mu(x) =\sum_{j=1}^N \lambda_j^2.
	\end{equation}
\end{Lemma}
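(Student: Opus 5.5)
The plan is to obtain the basis from a variational (determinant-maximization) problem, following Lewis.

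Fix any basis $\{w_j\}_{j=1}^N$ of $X_N$ and write $\mathbf w=(w_1,\ldots,w_N)^T$. Every candidate basis has the form $\mathbf u=A\mathbf w$ for an invertible $N\times N$ matrix $A$. Let $|\mathbf u(x)|$ denote the Euclidean norm of $\mathbf u(x)\in\RR^N$, and set
\[
\Psi(A):=\int_\Omega |A\mathbf w(x)|^p\,d\mu(x).
\]
I will maximize $\det A$ over the set $\mathcal K:=\{A\colon \Psi(A)\le 1\}$.

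\textbf{Existence of a maximizer.} The map $A\mapsto \Psi(A)^{1/p}$ is a norm on $\RR^{N\times N}$. Positivity holds because $A\mathbf w=0$ $\mu$-a.e. forces every row of $A$ to give the zero function in $X_N$, and the $w_j$ are linearly independent in $L_p$. Hence $\mathcal K$ is compact and a maximizer $A_0$ exists. The maximum is positive, since a small multiple of the identity lies in $\mathcal K$, so $A_0$ is invertible. The constraint is active, $\Psi(A_0)=1$, because otherwise $(1+\delta)A_0\in\mathcal K$ has larger determinant. Put $\mathbf u=A_0\mathbf w$, so that $\int|\mathbf u|^p\,d\mu=1$.

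\textbf{First-order condition.} For an arbitrary matrix $S$ I perturb $A_0$ to $(I+tS)A_0$. Then $\det\bigl((I+tS)A_0\bigr)=\det A_0\,(1+t\operatorname{tr}S+O(t^2))$, and
\[
\frac{d}{dt}\Big|_{t=0}\int|(I+tS)\mathbf u|^p\,d\mu=p\int|\mathbf u|^{p-2}\langle S\mathbf u,\mathbf u\rangle\,d\mu .
\]
Differentiating under the integral sign is the main technical point, especially for $1\le p<2$ where $|\mathbf u|^{p-2}$ blows up on the zero set of $\mathbf u$. I will handle it by dominated convergence. By the mean value theorem, the difference quotients are bounded by $C_S\bigl(|\mathbf u|+|t||S\mathbf u|\bigr)^{p-1}|\mathbf u|\le C'_S|\mathbf u|^p$ for $|t|\le1$, which is integrable. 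At points where $\mathbf u(x)=0$ the integrand is $0$, so the singularity is harmless.

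\textbf{Lagrange argument.} Suppose $\operatorname{tr}S>0$ and $\int|\mathbf u|^{p-2}\langle S\mathbf u,\mathbf u\rangle\,d\mu<0$. Then for small $t>0$ the matrix $(I+tS)A_0$ stays in $\mathcal K$ and has strictly larger determinant, a contradiction. Consequently the linear functionals
\[
S\mapsto\operatorname{tr}S\qquad\text{and}\qquad S\mapsto\int|\mathbf u|^{p-2}\langle S\mathbf u,\mathbf u\rangle\,d\mu
\]
are positively proportional. The latter functional equals $\sum_{i,j}S_{ij}\int|\mathbf u|^{p-2}u_iu_j\,d\mu$. Matching coefficients with $\operatorname{tr}S=\sum_i S_{ii}$ gives
\[
\int_\Omega|\mathbf u|^{p-2}u_iu_j\,d\mu=\lambda\,\delta_{ij}
\]
for some $\lambda\ge 0$. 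Taking the trace yields $N\lambda=\int|\mathbf u|^p\,d\mu=1$, so $\lambda=1/N$.

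\textbf{Normalization.} Finally I replace $\mathbf u$ by $\sqrt N\,\mathbf u$. Then $F=\bigl(\frac1N\sum_j u_j^2\bigr)^{1/2}$ becomes exactly the old $|\mathbf u|$. This gives $\|F\|_{L_p(\Omega,\mu)}^p=\int|\mathbf u|^p\,d\mu=1$. It also gives
\[
\int N u_iu_j\,|\mathbf u|^{p-2}\,d\mu=N\cdot\tfrac1N\,\delta_{ij}=\delta_{ij}.
\]
Expanding $\bigl|\sum_j\lambda_ju_j\bigr|^2$ then yields \eqref{6-19}.

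The step I expect to be the main obstacle is the justification of differentiability and the first-order condition when $p<2$. In particular, one must check that $|\mathbf u|^{p-2}u_iu_j$ is integrable, which follows from the pointwise bound $|\mathbf u|^{p-2}|u_iu_j|\le|\mathbf u|^p$.
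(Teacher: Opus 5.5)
Your proof is correct. It is essentially Lewis's determinant-maximization argument with the first-order variation $A_0\mapsto(I+tS)A_0$, which is the proof behind the citation \cite[Lemma 7.1]{BLM} (the survey itself does not reproduce it). The one step worth writing out is the passage to proportionality of the two functionals: if $\operatorname{tr}S=0$, apply your one-sided implication to $\pm S+\delta I$ and let $\delta\to0^+$, which shows that the kernel of the trace lies in the kernel of $S\mapsto\int|\mathbf u|^{p-2}\langle S\mathbf u,\mathbf u\rangle\,d\mu$.
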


 To illustrate the idea, let $F$ be the function as given in Lemma \ref{lem-6-2}.
Consider the probability measure  $\nu(dx) = |F(x)|^p \mu(dx)$ on $\Omega$, and the linear  mapping $U: L_p(\Omega, \mu)\to L_p(\Omega,\nu)$ given by
$$ Uf (x)=\begin{cases}
\frac {f(x)} {F(x)},\  \ &\text{if $F(x)\neq 0$};\\
1,\   \  &\text{otherwise}.
\end{cases}$$
Since each function in $X_N$ is identically zero on the set $\{x\colon  F(x)=0\}$, we have
$$\|Uf\|_{L_p(\nu)} =\|f\|_{L_p(\mu)},\   \ \forall f\in X_N.$$
In addition, \eqref {6-19} implies that the sequence
$\psi_j := U u_j$, $j=1,2,\ldots, N$ forms  an orthonormal basis of the space $U X_N$ with respect to the inner product of  $L_2(\Omega, d\nu)$. Moreover, we have
\[
	 \sum_{j=1}^N |\psi_j(x)|^2 =N,\   \ \forall x\in \Omega,
\]
implying
$$\|Uf\|_{L_\infty(\Omega)} \leq N^{\frac12}\|Uf\|_{L_2(\Omega, \nu)},\   \ \forall f\in X_N.$$
Thus,  applying Theorem \ref{IT1} to the space $U X_N$ and the measure $\nu$, we deduce
{the following result}.

\begin{Corollary}\label{cor-6-2} Let $1\leq p<\infty$  and let  $X_N$ be an $N$-dimensional space of bounded functions on $\Omega$ associated with a function  $F$ given in Lemma \ref{lem-6-2}.
{
Consider a new measure $\nu$ given by
\[
d\nu = |F|^p \, d\mu.
\] }
Let $\{\xi^j\}_{j=1}^\infty$ be  a sequence of IID
random points drawn  from the distribution $\nu$.
Let $\epsilon\in (0, 1/2)$.  	For  $j=1,2,\ldots, m$, let  $\lambda_j:=\frac 1m \frac 1 {|F(\xi^j)|^p}$ if $F(\xi^j)\neq 0$, and $\lambda_j=\frac 1m$ if $F(\xi^j)=0$.

\textnormal{(i)} If $1\leq p\leq 2$, then there  exists a constant $C_p(\epsilon)\ge 1$ such that for any given parameter $A\ge 1$, and any integer
\begin{equation*} \label{6-21:2024}
m \ge  C_p(\epsilon)  A \cdot  N \cdot \log^3(2N ),
\end{equation*}
{
the Marcinkiewicz-type discretization inequalities
}
\begin{equation}\label{6-22}
(1-\varepsilon)\|f\|_{L_p(\Omega, \mu)}^p \leq \sum_{j=1}^m \lambda_j \left|f\left(\xi^j\right)\right|^p \leq(1+\varepsilon)\|f\|_{L_p(\Omega, \mu)}^p
\end{equation}
hold for all $f\in X_N$  with probability $\ge 1- N^{-A}$.

\textnormal{(ii)}  If $2<p<\infty$,
then there  exists a constant $C_p(\epsilon)\ge 1$ such that for any given parameter $A\ge 1$, and any integer
$$
m \ge  C_p(\epsilon)  A \cdot  N^{p/2} \cdot \log^2(2N ),
$$
{
the Marcinkiewicz-type discretization
}
inequalities \eqref{6-22}  hold for all $f\in X_N$  with probability $\ge 1- e^{-A}$.
\end{Corollary}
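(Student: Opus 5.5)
The approach is to transfer the problem to the space $UX_N\subset L_p(\Omega,\nu)$ and apply Theorem~\ref{IT1} there with $K$ a fixed absolute constant (say $K=2$, since Theorem~\ref{IT1} requires $K\ge 2$ and $\textnormal{NI}_{2,\infty}(\sqrt N)$ trivially implies $\textnormal{NI}_{2,\infty}(\sqrt{2N})$). Let $F$ and the basis $\{u_j\}$ be as in Lemma~\ref{lem-6-2}, set $d\nu=|F|^p\,d\mu$ (a probability measure because $\|F\|_{L_p(\Omega,\mu)}=1$), and define $U$ as in the discussion preceding the corollary.

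\textbf{Step 1: the transferred space satisfies a Nikol'skii inequality.} Here $U$ is linear and injective on $X_N$, so $UX_N$ is an $N$-dimensional space of functions defined everywhere on $\Omega$.
\begin{enumerate}[\rm (a)]
\item Every $f\in X_N$ vanishes wherever $F=0$, because $|f|\le (\sum\lambda_j^2)^{1/2}\sqrt N F$ pointwise by Cauchy--Schwarz. Hence $\|Uf\|_{L_p(\nu)}=\|f\|_{L_p(\mu)}$.
\item By \eqref{6-19}, the functions $\psi_j=Uu_j$ are orthonormal in $L_2(\nu)$, since $\int |Uf|^2|F|^p\,d\mu=\int|f|^2F^{p-2}\,d\mu$.
\item On $\{F\ne0\}$ we have $\sum_j\psi_j^2=NF^2/F^2=N$.
\end{enumerate}
One subtlety: on $\{F=0\}$ the convention $Uf=1$ does not make sense for all of $X_N$ linearly. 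I would instead define $Uf=0$ there; that set is $\nu$-null anyway, so norms are unaffected and $\sum_j\psi_j^2\le N$ everywhere. By \eqref{Nik-equiv}, $UX_N\in\textnormal{NI}_{2,\infty}(\sqrt N)\subset\textnormal{NI}_{2,\infty}(\sqrt{2N})$. Boundedness of the functions in $UX_N$ follows from this same estimate.

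\textbf{Step 2: apply Theorem~\ref{IT1}.} Take IID points $\xi^j\sim\nu$. Theorem~\ref{IT1}(i) (for $1\le p\le2$) or (ii) (for $p>2$) with $K=2$ shows that, with the stated probability, for all $g\in UX_N$
\[
(1-\epsilon)\|g\|_{L_p(\nu)}^p\le \frac1m\sum_{j=1}^m |g(\xi^j)|^p\le (1+\epsilon)\|g\|_{L_p(\nu)}^p .
\]
The thresholds are $m\ge C_p(\epsilon)A\,2N\log^3(4N)$ and $m\ge C_p(\epsilon)A(2N)^{p/2}\log^2(4N)$, respectively. These are absorbed into the stated forms after enlarging $C_p(\epsilon)$, using $\log(4N)\le 2\log(2N)$. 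The probability bound $(2N)^{-A}\le N^{-A}$ in case (i), and $e^{-A}$ in case (ii), carries over unchanged.

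\textbf{Step 3: translate back.} Put $g=Uf$. Almost surely no $\xi^j$ lies in $\{F=0\}$, since that set has $\nu$-measure zero. Hence $|g(\xi^j)|^p=|f(\xi^j)|^p/|F(\xi^j)|^p=m\lambda_j|f(\xi^j)|^p$, and, using Step 1(a), the previous display becomes exactly \eqref{6-22}. On the null event where some $F(\xi^j)=0$, we have $f(\xi^j)=0$, so the choice $\lambda_j=1/m$ there is harmless. The main point requiring care is Step 1, namely the handling of the zero set of $F$ and the exact orthonormality and Christoffel-sum computation. The rest is a direct bookkeeping of constants.
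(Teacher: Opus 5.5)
Your proposal is correct and follows the paper's argument: you transfer to $UX_N\subset L_p(\Omega,\nu)$, use \eqref{6-19} to obtain an $L_2(\nu)$-orthonormal basis with $\sum_j\psi_j^2\le N$, and apply Theorem~\ref{IT1}. Your two refinements are also sound: defining $Uf=0$ on $\{F=0\}$ avoids a problem with the paper's convention $Uf=1$ there, which is not linear (though harmless, since that set is $\nu$-null), and taking $K=2$ meets the hypothesis $K\ge 2$ of Theorem~\ref{IT1}, with the resulting constants absorbed via $\log(4N)\le 2\log(2N)$.
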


\section{Universal discretization}\label{sec:7}

We start with the following general result on universal discretization.

\begin{Theorem}\label{Theorem-7-1}
Let $\{V_j\}_{j=1}^L$ be a sequence of linear subspaces of $L_\infty(\Omega)$  which satisfies the following condition for some constant $R\ge 1$:
\begin{equation*}\label{7-1:2024}
\|f\|_{L_\infty(\Omega)} \leq\sqrt{R} \|f\|_{L_2(\Omega, \mu)},\   \    \   \forall f\in \bigcup_{j=1}^L V_j.
\end{equation*}
Let $\xi^1, \ldots, \xi^m $ be an independent sequence of random variables  taking values in   $\Omega$ and  satisfying
	\[ \frac1m \sum_{j=1}^m \PP[\xi^j\in E]=\mu(E),\   \ \forall E\in\cF.\]
	Let $\epsilon\in (0, \frac12)$ and  $1\leq p\leq 2$.  Then there exist  constants $C_p(\epsilon)\ge 1$ and $c_p>0$  such that for any parameter $A\ge 1$ and  any integer
	\begin{equation*}\label{7-2:2024}
	m \ge C_p(\epsilon)\cdot  A  \cdot  R   \Bl[(\log R)^3+   (\log R)^{3-\frac p2}(\log \log L)^{\frac p2} +   (\log L)(\log R) \Br],
	\end{equation*}
	the Marcinkiewicz-type discretization inequality
\[
	(1-\epsilon) \|f\|_{L_p(\Omega, \mu)}^p \leq \frac  1m \sum_{j=1}^m |f(\xi^j)|^p\leq (1+\epsilon) \|f\|_{L_p(\Omega, \mu)}^p,
\]		
holds for all $ f\in  \bigcup_{j=1}^L V_j$  with probability $\ge 1-R^{-A}$.

\end{Theorem}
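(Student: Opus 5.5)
The plan is to mimic the proof of Theorem~\ref{IT1}: a preliminary discretization onto a large random finite set $\Omega_{m_1}$, followed by an application of Corollary~\ref{cor-6-2a}~(i) to the restricted subspaces, with $n=m_1$. The new feature is the factor $(\log n)^{p/2}$ in Corollary~\ref{cor-6-2a}~(i), which becomes $(\log m_1)^{p/2}$. To get only $(\log\log L)^{p/2}$ and $(\log R)^3$ instead of $\log L$ terms inside that power, the preliminary set must have size $m_1$ with $\log m_1\lesssim \log R+\log\log L$. Equivalently, $m_1$ should be at most polynomial in $R\log L$ (times $\epsilon$-factors).

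First, by Proposition~\ref{prop-6-2} with $s=\max_j\dim V_j$ (note $s\le R$, since $\int\Phi\,d\mu=\dim V_j\le R$ for each $V_j$), we may assume $m\le C(\epsilon) R[\log L+R]$ polynomially. Otherwise the theorem follows directly from Proposition~\ref{prop-6-2}, because $p\le 2$ gives $\|f\|_\infty\le R^{1/p}\|f\|_p$ by \eqref{Nik-impl}. Then I set $\ell:=\lceil C R\epsilon^{-2}\log\frac1\epsilon\,(R+\log L)\rceil$ and $m_1:=m\ell$. I draw $x_1,\dots,x_{m_1}$ independently, grouped into $m$ blocks $\Lambda_j$ of size $\ell$, with each $x_i$ in block $j$ distributed as $\xi^j$. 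I take $n_j$ uniform in $\Lambda_j$ and set $\xi^j:=x_{n_j}$; this gives the same joint law as the original sequence. The averaged distribution of the $x_i$ is $\mu$, and $\log m_1\le C(\epsilon)(\log R+\log\log L)$ when $m$ is in the reduced range.

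Next, I apply Proposition~\ref{prop-6-2} with $L$ subspaces and $s\le R$, once for exponent $p$ and once for exponent $2$. This step needs a version of Proposition~\ref{prop-6-2} and Lemma~\ref{lem-2-2} for independent, non-identically distributed points with averaged law $\mu$; the Bernstein argument goes through unchanged, since only the average of the means enters. It yields, with probability $\ge 1-4\exp(-cm_1\epsilon^2/R)$, the analogues of \eqref{6-15:2024} and \eqref{6-16:2024} on $\bigcup_j V_j$ with $\epsilon/10$. The failure probability is at most $e^{-m}$, which is far below $R^{-A}$. On this event, the restrictions of the $V_j$ to $\Omega_{m_1}$ satisfy \eqref{6-5b} with $2R$.

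Conditioned on $\Omega_{m_1}$, the points $x_{n_j}$ satisfy the hypothesis of Corollary~\ref{cor-6-2a} with $n=m_1$. Corollary~\ref{cor-6-2a}~(i) then gives the $\epsilon/10$-discretization with respect to $\|\cdot\|_{L_p(\Omega_{m_1})}$, provided
\[
m\ge C_p(\epsilon) R\bigl[(\log R)^{3-\frac p2}(\log R+\log\log L)^{\frac p2}+(\log L)(\log R)\bigr].
\]
This condition is dominated by the stated bound, since $(a+b)^{p/2}\le a^{p/2}+b^{p/2}$ and $(\log R)^{3-\frac p2}(\log R)^{\frac p2}=(\log R)^3$. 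The resulting probability is $\ge 1-\exp(-c m\epsilon^4/(R\log^2(R/\epsilon)))\ge 1-\frac12 R^{-A}$, because $m\ge C A R\log^3 R$. Combining the two discretizations and integrating over the conditioning, exactly as at the end of the proof of Theorem~\ref{IT1}, finishes the argument. The main obstacle I expect is controlling $\log m_1$. This requires the preliminary reduction to $m$ polynomial in $R\log L$, and it requires checking that Proposition~\ref{prop-6-2} is valid for non-identically distributed points; these are the steps I would verify carefully.
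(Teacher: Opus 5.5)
Your proposal is correct and takes the route the paper indicates: you do a preliminary discretization via Proposition~\ref{prop-6-2}, reduce to the range where $m$ is polynomial in $R(R+\log L)$ so that $\log m_1\lesssim \log R+\log\log L$, and then apply Corollary~\ref{cor-6-2a}~(i) on $\Omega_{m_1}$ as in the proof of Theorem~\ref{IT1}, with the observation $\dim V_j\le R$ supplying $s$. The one step to state more carefully is the non-i.i.d.\ version of Proposition~\ref{prop-6-2}. Lemma~\ref{lem-2-2} does not apply verbatim, because the individual means are nonzero and $\EE|\eta_i|$ can be of order $R$. The variance form of Bernstein's inequality works instead, since $\sum_i\EE\eta_i^2\le R\sum_i\EE\eta_i=Rm_1$ for $\eta_i=|f(x_i)|^p/\|f\|_{L_p(\Omega,\mu)}^p$.
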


Theorem \ref{Theorem-7-1} can be deduced directly from  Corollary~\ref{cor-6-2a}
and Proposition~\ref{prop-6-2}, following a similar approach as the proof of Theorem~\ref{IT1}.

In this section,  we explore   universal discretization
for  special collections of subspaces generated by a given  finite set of  bounded  functions on $\Omega$.
In this context, we can improve  the estimates provided in Theorem \ref{Theorem-7-1}.

We introduce the general setting as follows.
Let
$\D_N=\{\varphi_i\}_{i=1}^N$ be a  fixed system  of linearly independent  bounded  functions on $\Omega$ satisfying
\[
\sup_{x\in\Omega} |\varphi_j(x)|\leq 1,\   \ j=1,2,\ldots, N.
\]
Given an integer $1\leq s\leq N$, we denote by $\mathcal{X}_s(\D_N)$ the collection of all linear spaces spanned by  $\{\varphi_j\colon j\in J\}$  with $J\subset \{1,2,\ldots, N\}$ and $|J|=s$. A function  $f:\Omega\to \CC$ is said to be $s$-sparse with respect to  $\CD_N$ if it belongs to a  linear space from the collection $\mathcal{X}_s(\D_N)$.
We
denote by $\Sigma_s(\D_N)$  the set of all $s$-sparse functions with respect to the dictionary $\CD_N$; namely,
\begin{align*}
	\Sigma_s(\D_N):&= \bigcup_{V\in\cX_s(\D_N)} V.
\end{align*}
Recall that a finite sequence $\{\xi^j\}_{j=1}^m$ of points in $\Omega$ is said to provide
universal discretization of the $L_p$ norm for $\Sigma_s(\mathcal{D}_N)$,
$1 \le p < \infty$, with positive constants $C_1, C_2$ if
\begin{equation}\label{7-4}
C_1\|f\|_{L_p(\Omega, \mu)}^p \le \frac{1}{m} \sum_{j=1}^m |f(\xi^j)|^p \le C_2\|f\|_{L_p(\Omega, \mu)}^p,\
\ \forall f\in\Sigma_s(\D_N).
\end{equation}

Consider a sequence $\{\xi^j\}_{j=1}^m$ of IID random points with common distribution $\mu$.
Our goal is to establish an estimate for  the probability that  \eqref{7-4}  holds in terms of   the number $m$ of random points.
In the above context, we have the following result, which  improves  the estimates  in Theorem \ref{Theorem-7-1}.

\begin{Theorem}[\cite{DTM2}]\label{thm-7-2}  Let $ \CD_N=\{\ff_j\}_{j=1}^N$ be a set of $N$ bounded functions on $\Omega$
satisfying
\[
\max_{1\leq j\leq N} \|\ff_j\|_{L_\infty(\Omega)}\le 1.\]
Let $1\leq s\leq N$ be a given integer. Assume that there exists a constant $K\ge 1$ such that
\begin{equation}\label{7-2}
\sum_{j\in J} |a_j|^2 \le K  \Bigl\|\sum_{j\in J} a_j\ff_j\Bigr\|^2_{L_2(\Omega, \mu)},\   \   \ \forall a_j\in\CC
\end{equation}
whenever  $J\subset \{1,2,\ldots, N\}$ with $|J|=s$.
Let $\xi^1,\ldots, \xi^m$ be IID
random points  with common  distribution  $\mu$ on $\Omega$.
Then given  $1\le p\le 2$ and $\epsilon\in (0,\frac12)$,
there exist constants $C_p(\varepsilon)>1$ and $c_p(\varepsilon)>0$,
{depending only on $p$ and $\varepsilon$}, such that the inequalities
\begin{equation*}\label{7-8:2024}
(1-\epsilon)\|f\|_{L_p(\Omega, \mu)}^p \le \frac{1}{m}\sum_{j=1}^m |f(\xi^j)|^p \le (1+\epsilon)\|f\|_{L_p(\Omega, \mu)}^p,\   \   \ \forall f\in  \Sigma_s(\CD_N)
\end{equation*}
hold with probability $\ge 1-2 \exp\Bl( -\frac {c_p(\epsilon) m}{Ks\log^2 (2Ks)}\Br)$ provided that
$$
m \ge  C_p(\epsilon) Ks \log N\cdot (\log(2Ks ))^2\cdot (\log (2Ks )+\log\log N).
$$
\end{Theorem}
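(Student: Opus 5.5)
The plan is to reduce to the conditional Theorem~\ref{prL1} applied to $\CW:=\{f\in\Sigma_s(\CD_N)\colon \|f\|_{L_p(\Omega,\mu)}\le 1\}$. The key input is an entropy estimate for $\CW$ in the uniform norm on a finite point set. This estimate must exploit sparsity with respect to the dictionary, instead of counting the $\binom Ns$ subspaces separately as in Theorem~\ref{Theorem-7-1}.

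\textbf{Step 1 (Nikol'skii bound).} Condition \eqref{7-2} together with $\|\ff_j\|_\infty\le 1$ gives, for $f=\sum_{j\in J}a_j\ff_j$ with $|J|=s$,
\[
\|f\|_\infty\le \sum_{j\in J}|a_j|\le \sqrt s\,\Bl(\sum_{j\in J}|a_j|^2\Br)^{1/2}\le \sqrt{Ks}\,\|f\|_{L_2(\Omega,\mu)}.
\]
Every $V\in\cX_s(\D_N)$ therefore lies in $\textnormal{NI}_{2,\infty}(\sqrt{Ks})$. By \eqref{1-7}, this yields hypothesis (i) of Theorem~\ref{prL1} with $R=Ks$.

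\textbf{Step 2 (preliminary discretization).} As in the proof of Theorem~\ref{IT1}, I first draw a large IID sample $\Omega_{m_1}$, with $m_1=m\ell$ and $\ell$ polynomial in $Ks/\epsilon$. Proposition~\ref{prop-6-2}, applied with $L=\binom Ns$ and dimension $s$, guarantees that the $L_p$ and $L_2$ norms of all $f\in\Sigma_s(\CD_N)$ are preserved on $\Omega_{m_1}$ up to $\epsilon/10$. This holds with overwhelming probability, because $m_1\gtrsim Ks\epsilon^{-2}(s\log N+s\log(1/\epsilon))$. The Nikol'skii bound then transfers to $\Omega_{m_1}$ with $R=2Ks$. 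Next, $\xi^j=x_{n_j}$ is obtained by selecting uniformly within blocks, exactly as before. Finally, the conditional-probability bookkeeping of Theorem~\ref{IT1} combines the two events.

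\textbf{Step 3 (entropy, the main obstacle).} I need to estimate $\cH_t(\CW,L_\infty(\Omega_{m_1}))$. For $p=1$, I would use Maurey's empirical method. Each $f\in\CW$ with $\|f\|_{L_2}\le$ const has coefficients satisfying $\sum|a_j|\le \sqrt{Ks}\,\|f\|_2$, so $f/\|a\|_1$ lies in the absolute convex hull of $\{\ff_j\}$. Approximating it by averages of $k$ dictionary elements in $L_\infty(\Omega_{m_1})$ gives error $\lesssim \|a\|_1\sqrt{\log m_1/k}$, using the type-2 behaviour of $\ell_\infty^{m_1}$ with constant $\sqrt{\log m_1}$. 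The number of such averages is $N^{k}$. This should give
\[
\cH_t\lesssim Ks\,t^{-2}\log m_1\log N,
\]
with $\log N$ entering only once, rather than through $\log L\approx s\log N$ multiplied by $R$. For $1\le p\le 2$, I would interpolate as in the Remark after Theorem~\ref{thm-5-2}, combining the $L_p\to L_2$ entropy (controlled via \eqref{Nik-impl} and volumetric/sparse counting) with the $L_2\to L_\infty$ Maurey bound. This should yield
\[
\cH_t(\CW,L_\infty(\Omega_{m_1}))\le C_p\,Ks\,t^{-p}(\log (Ks))^{1-p/2}\log m_1\log N.
\]
Here $\log m_1\lesssim \log(Ks)+\log\log N$ once $m$ is reduced, as in Theorem~\ref{IT1}, to at most polynomial size in $Ks\log N$. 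Getting the powers of the logarithms right in this step is where I expect the difficulty to lie.

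\textbf{Step 4 (conclusion).} Inserting this bound into \eqref{6-3:2024} with $R=2Ks$, the integral $\int_{c\epsilon}^R\cH_{\epsilon t^{1/p}}\,dt$ is at most $C_p(\epsilon)Ks\log N\,\log(Ks)\,\log m_1$. The extra factor $\log(R/\epsilon)$ in \eqref{6-3:2024} then produces the stated requirement
\[
m\ge C_p(\epsilon)Ks\log N(\log 2Ks)^2(\log 2Ks+\log\log N).
\]
The probability bound $\exp\bigl(-c\,m/(Ks\log^2(2Ks))\bigr)$ comes directly from Theorem~\ref{prL1}, and the failure probability of the preliminary step is absorbed into the factor $2$.
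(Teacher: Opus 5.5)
\textbf{There is a genuine gap: Step~3 for $1\le p<2$, which is the core of the theorem.} Steps~1 and~2 are sound. For $p=2$ the plan goes through. On $\Omega_{m_1}$ the set $\CW$ lies in $\sqrt{2Ks}$ times the absolute convex hull of $\CD_N$, so Maurey gives $\cH_u\lesssim Ks\,u^{-2}\log N\log m_1$. Then \eqref{6-3:2024} with $R=2Ks$ yields exactly the stated threshold.

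For $p<2$ there are two problems.
\begin{itemize}
\item \textbf{The ingredients you name do not give $t^{-p}$ decay.} The inclusion $\CW\subset (Ks)^{1/p-1/2}\,\CW_2$ from \eqref{Nik-impl}, combined with Maurey, only gives $\cH_u\lesssim (Ks)^{2/p}u^{-2}\log N\log m_1$. This makes $\int_{c\epsilon}^{R}\cH_{\epsilon t^{1/p}}\,dt$ of order $(Ks)^{2/p}$, e.g.\ $(Ks)^2$ for $p=1$. Volumetric or sparse counting costs at least $\log\binom Ns\approx s\log N$ bits at every scale, which brings back the $Ks\cdot s\log N$ term of Theorem~\ref{Theorem-7-1}. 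Taking the better of the two bounds at each scale still leaves a power $s^{2-p/2}$. So the $L_p\to L_2$ factor in the product estimate of the Remark after Theorem~\ref{thm-5-2} must decay in $k$ already for $k\ll s\log N$, and your sketch provides nothing that does this.
\item \textbf{Step~4 drops a factor, even granting your displayed bound.} With the factor $(\log Ks)^{1-p/2}$ the integral is $\lesssim Ks\,\epsilon^{-p}(\log Ks)^{1-p/2}\log(Ks/\epsilon)\log N\log m_1$. After multiplying by $\epsilon^{-5}\log(R/\epsilon)$ the threshold becomes $Ks\log N(\log 2Ks)^{3-p/2}\log m_1$. This exceeds the stated one by $(\log 2Ks)^{1-p/2}$.
\end{itemize}

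\textbf{What is needed.} You need $\cH_u(\CW,L_\infty(\Omega_{m_1}))\lesssim Ks\,u^{-p}\log N\log m_1$ for all $1\le p\le2$, with no extra power of $\log Ks$. One way to get it is a bootstrap.
\begin{itemize}
\item For $f,g\in\CW$ one has $\|f-g\|_{L_2(\Omega_{m_1})}^2\le\|f-g\|_{L_p(\Omega_{m_1})}^p\|f-g\|_{L_\infty(\Omega_{m_1})}^{2-p}\le 2^p\|f-g\|_{L_\infty(\Omega_{m_1})}^{2-p}$. Since covering centers lie in $\CW$, this gives $e_k(\CW,L_2)\lesssim e_k(\CW,L_\infty)^{1-p/2}$.
\item Feed this into $e_{2k}(\CW,L_\infty)\lesssim e_k(\CW,L_2)\,b_k$. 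Here $b_k\lesssim\sqrt{Ks\log N\log m_1/k}$ is the Maurey bound for the $L_2$-unit ball of the differences $f-g$.
\item Induct over dyadic $k$, starting from the trivial bound $(2Ks)^{1/p}$. This gives $e_k\lesssim (Ks\log N\log m_1/k)^{1/p}$, which is exactly what \eqref{6-3:2024} needs to produce $(\log 2Ks)^2(\log 2Ks+\log\log N)$.
\end{itemize}

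\textbf{Caveat on sparsity.} The differences $f-g$ are $2s$-sparse. Running Maurey on them requires bounding the $\ell_1$-norm of their coefficients by $\sqrt{Ks}\,\|f-g\|_2$, i.e.\ a lower Riesz bound on index sets of size $2s$. You also need this bound to transfer to $\Omega_{m_1}$. Condition \eqref{7-2}, imposed only for $|J|=s$, does not literally supply this, so you must handle it. It is automatic when $\CD_N$ is a Riesz system.
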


Theorem \ref{thm-7-2} was proved in \cite{DTM2}.  Note that  the problem of universal discretization for some subspaces of the
trigonometric polynomials was studied in \cite{DPTT, VT160}.

The condition \eqref{7-2} is satisfied for all $1\leq s\leq N$ if  $\varphi_1, \ldots, \varphi_N$ are linearly independent in $L_2(\Omega, \mu)$, in which case
one can choose  $K^{-1}$ to be the smallest eigenvalue of the $N\times N$
matrix $\Bl[ \langle \varphi_j, \varphi_k\rangle_{L_2(\Omega, \mu)}\Br]_{1\leq j, k\leq N}$.
In applications, we often  assume that $\CD_N$ is a Riesz basis; that is,  there exist constants $0< R_1 \le R_2 <\infty$ such that for   any  $(a_1,\ldots, a_N)\in\CC^N$,
\begin{equation*}\label{Riesz0}
R_1 \Bigl( \sum_{j=1}^N |a_j|^2\Bigr)^{1/2} \le \Bigl\|\sum_{j=1}^N a_j\ff_j\Bigr\|_{L_2(\Omega, \mu)} \le R_2 \Bigl( \sum_{j=1}^N |a_j|^2\Bigr)^{1/2}.
\end{equation*}
Note however that estimates in Theorem \ref{thm-7-2}  are independent of the constant~$R_2$.

The universal discretization of $L_2$ norm  is closely related to  the concept of Restricted Isometry Property (RIP), which plays a central role in compressed sensing {(see, e.g., \cite{FR} and the references therein)}.
Given an integer $1\leq s\leq N$, we denote by $\Sigma_s(\RR^N)$ the set of all $s$-sparse vectors in $\RR^N$; i.e., vectors $\bx=(x_1,\ldots, x_N)\in\RR^N$ with at most $s$ nonzero  coordinates $x_i$.
An  $m \times N$ matrix $\mathbf{A}$ is said to satisfy the RIP  of order $s$  if  there exists a positive constant $\delta \in(0,1)$ such that
\begin{equation} \label{7-5}(1-\delta)\|\mathbf{x}\|_{\ell_2^N}^2 \leq\|\mathbf{A} \mathbf{x}\|_{\ell_2^m}^2 \leq(1+\delta)\|\mathbf{x}\|_{\ell_2^N}^2,\   \ \forall \mathbf{x} \in \Sigma_s(\RR^N).
\end{equation}
The smallest positive constant $\delta$ that satisfies \eqref{7-5},  denoted by $\delta_s(\mathbf{A})$, is called  the restricted isometry constant.

{Results on the RIP properties of the random matrix $\pmb\Phi(\pmb\xi)$ associated with uniformly bounded orthonormal systems $\mathcal{D}_N$ can be found in \cite{FR}.
Generalizations of the RIP, where one or both of the
$\ell_2$-norms are replaced by other norms, typically $\ell_p^N$-norms, have also proven
useful in this context (see, e.g.,~\cite{BGIKS, ChS08}, \cite[Exercise~9.6]{FR}).
To express universal sampling discretization within the RIP framework,
the following extension  was introduced in~\cite[Section~5]{VT202v2}.	
}

\begin{Definition} Let $\|\cdot\|$ be a norm on $\mathbb{R}^N$, and let $1\leq p<\infty$. Let $1\leq s, m\leq N $ be integers. An $m\times N$ real matrix $\mathbf {A}$   is said to have  the $s$-th order $R I P\left(\ell_p^m,\|\cdot\|\right)$  with constants $C_1, C_2>0$ if for any $\mathbf{x} \in \Sigma_s(\RR^N)$,  we have

$$
C_1\|\mathbf{x}\|^p \leq\|\mathbf{A x}\|^p_{\ell_p^m} \leq  C_2\|\mathbf{x}\|^p.
$$
\end{Definition}

Next, let $\Phi:\Omega\to\RR^N$ be a vector-valued function on $\Omega$  given by
	\[ \Phi(\bx) =(\varphi_1(\bx),\ldots, \varphi_N(\bx)),\   \ \bx\in\Omega.\]
Here and elsewhere in this section, we always treat $\Phi(\bx)$ as a row vector.
	Given  a finite sequence  $ \pmb\xi:=\{\xi^j\}_{j=1}^m\subset \Omega$ of   points, consider the $m\times N$ random matrix
	
	\begin{equation}\label{6-9-matrix} \pmb\Phi(\pmb\xi):= \begin{bmatrix}
		\Phi(\xi^1)\\
		\Phi(\xi^2)\\
		\vdots\\
		\Phi(\xi^m)			
	\end{bmatrix}
=\begin{bmatrix}
		\varphi_1(\xi^1) &\varphi_2(\xi^1) &\cdots& \varphi_N(\xi^1) \\
		\varphi_1(\xi^2) &\varphi_2(\xi^2) &\cdots& \varphi_N(\xi^2) \\
		\vdots&\vdots&\vdots& \vdots\\
		\varphi_1(\xi^m) &\varphi_2(\xi^m) &\cdots& \varphi_N(\xi^m)
	\end{bmatrix}.\end{equation}
	Additionally, given $1\leq p<\infty$,  we define the norm $\|\cdot\|_{p,\Phi}$ on $\RR^N$ by
	\[ \| \mathbf a\|_{p,\Phi} =\Bl\|\sum_{j=1}^N a_j \varphi_j\Br\|_{L_p(\Omega, \mu)},\   \   \mathbf{a} =(a_1,\ldots, a_N)^T\in \RR^N.\]
With the above notation, we can then write  the inequality \eqref{7-4}  in the form
	\[ C_1    \| \mathbf a\|_{p,\Phi}   \leq \bigl\| \pmb\Phi(\pmb \xi) \mathbf a\bigr\|_{L_p^m}\leq  C_2    \| \mathbf a\|_{p,\Phi},\   \  \forall \mathbf a\in \Sigma_s (\RR^N), \]
which is equivalent to asserting  that matrix $m^{-1/p}\pmb\Phi(\pmb\xi)$ has  the $s$-th order  $R I P\left(\ell^m_p,\|\cdot\|_{p,\Phi}\right)$ with  constants $C_1, C_2>0$.
In particular, if $p=2$, $\epsilon\in (0, 1)$ and   $\varphi_1,\ldots,\varphi_N$ is an orthonormal system in $L_2(\Omega, \mu)$, then  the universal discretization \eqref{7-4} with $C_1=1-\epsilon$ and $C_2=1+\epsilon$
can be formulated equivalently in terms of the RIP property of order $s$ of the random matrix $m^{-1/2}\pmb\Phi(\pmb \xi)$:
\begin{equation*}  (1-\epsilon)\|z\|_{\ell_2^N}^2\leq m^{-1/2}\|\pmb\Phi(\pmb\xi) z\|_{\ell_2^m}\leq (1+\epsilon) \|z\|_{\ell_2^N}^2,\    \ \forall z\in \Sigma_s(\RR^N).
\end{equation*}

In summary, we have
\begin{Theorem}\label{thm-6-3a}  Assume that
$\D_N=\{\varphi_1,\ldots,\varphi_N\}$  is  a finite  system  of  real valued   functions on $\Omega$,  $1\leq p<\infty$ and $1\leq s, m\leq N$ are integers.   Then   a sequence $ \pmb\xi:=\{\xi^j\}_{j=1}^m$  of $m$ points  in $\Og$  provides the { universal discretization} \eqref{7-4} of  the $L_p$ norm for $\Sigma_s(\D_N)$   with   constants $C_1, C_2>0$ if and only if the $m\times N$ matrix $m^{-1/p}\pmb\Phi(\pmb\xi)$, with  $\pmb\Phi(\pmb\xi)$ being defined  in \eqref{6-9-matrix},  has  the $s$-th order  $R I P\left(\ell^m_p,\|\cdot\|_{p,\Phi}\right)$ with  constants $C_1, C_2>0$. In particular, if  $p=2$, $\epsilon\in (0, 1)$ and   $\D_N$ is an orthonormal system in $L_2(\Omega, \mu)$, then  the universal discretization \eqref{7-4} with constants $C_1=1-\epsilon$ and $C_2=1+\epsilon$
holds if and only if the matrix  $m^{-1/2}\pmb\Phi(\pmb\xi)$ has
the RIP property of order $s$ with constants $1\pm \epsilon$.
\end{Theorem}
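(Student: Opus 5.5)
This is essentially a matter of unwinding definitions. I will fix the coefficient correspondence between $\Sigma_s(\D_N)$ and $\Sigma_s(\RR^N)$ and then compare the two inequalities term by term.

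First, take $f\in\Sigma_s(\D_N)$. Then $f\in \spn\{\varphi_j\colon j\in J\}$ for some $J\subset\{1,\ldots,N\}$ with $|J|=s$. So $f=\sum_{j=1}^N a_j\varphi_j$ with $a_j=0$ for $j\notin J$, and $\mathbf a=(a_1,\ldots,a_N)^T\in\Sigma_s(\RR^N)$. Conversely, every $\mathbf a\in\Sigma_s(\RR^N)$ has a support of size at most $s$. Since $s\le N$, this support can be enlarged to a set $J$ with $|J|=s$, and then $f_{\mathbf a}:=\sum_j a_j\varphi_j\in\Sigma_s(\D_N)$. Hence $\mathbf a\mapsto f_{\mathbf a}$ maps $\Sigma_s(\RR^N)$ onto $\Sigma_s(\D_N)$. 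I will not need injectivity: both inequalities are statements about $f_{\mathbf a}$ alone, so any representative $\mathbf a$ of $f$ gives the same quantities.

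Next, I compute the two sides for a fixed $\mathbf a$. By definition of $\|\cdot\|_{p,\Phi}$, we have $\|\mathbf a\|_{p,\Phi}^p=\|f_{\mathbf a}\|_{L_p(\Omega,\mu)}^p$. The $j$-th entry of $\pmb\Phi(\pmb\xi)\mathbf a$ is $\Phi(\xi^j)\mathbf a=f_{\mathbf a}(\xi^j)$, which gives
\[
\|m^{-1/p}\pmb\Phi(\pmb\xi)\mathbf a\|_{\ell_p^m}^p=\frac1m\sum_{j=1}^m|f_{\mathbf a}(\xi^j)|^p .
\]
Therefore \eqref{7-4} holding for all $f\in\Sigma_s(\D_N)$ is literally the statement $C_1\|\mathbf a\|_{p,\Phi}^p\le\|m^{-1/p}\pmb\Phi(\pmb\xi)\mathbf a\|_{\ell_p^m}^p\le C_2\|\mathbf a\|_{p,\Phi}^p$ for all $\mathbf a\in\Sigma_s(\RR^N)$. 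This is the $s$-th order $RIP(\ell_p^m,\|\cdot\|_{p,\Phi})$ with constants $C_1,C_2$, and both directions of the equivalence follow from the surjectivity above.

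For the particular case, take $p=2$ and $\D_N$ orthonormal in $L_2(\Omega,\mu)$. Then Parseval gives $\|\mathbf a\|_{2,\Phi}=\|\mathbf a\|_{\ell_2^N}$. The RIP$(\ell_2^m,\|\cdot\|_{2,\Phi})$ with constants $1\pm\epsilon$ then becomes exactly \eqref{7-5} for $\mathbf A=m^{-1/2}\pmb\Phi(\pmb\xi)$ with $\delta=\epsilon$.

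There is no real obstacle here. The only points needing care are the exact-$s$ versus at-most-$s$ support convention, handled by padding $J$ using $s\le N$, and the normalisation $m^{-1/p}$. I would also note in passing that the displayed $L_2$ formula preceding the theorem should read $\|m^{-1/2}\pmb\Phi(\pmb\xi)z\|_{\ell_2^m}^2$.
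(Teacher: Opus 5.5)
Your proof is correct and follows the paper's own argument, which also obtains the theorem by rewriting \eqref{7-4} through $f=\sum_j a_j\varphi_j$ and the identity $\|\mathbf a\|_{p,\Phi}=\|f\|_{L_p(\Omega,\mu)}$. Your added care with exact-$s$ versus at-most-$s$ supports and with the $p$-th powers (which the paper's intermediate display omits) tightens the argument, and your correction of the displayed $L_2$ formula to $\|m^{-1/2}\pmb\Phi(\pmb\xi)z\|_{\ell_2^m}^2$ is right.
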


Some applications of  universal discretization and  Theorem \ref{thm-6-3a} can be found in   \cite{DTM3,DTM1}.

    	\section{Some improved bounds in sampling  discretization of integral norms}

{Analogous to the $L_2$-discretization discussed in Section \ref{subsection-3-3}, for any given constants $C_2\ge 1\ge C_1>0$ and $1\leq p<\infty$, we define  $m (X_N; p; C_1, C_2)$  to be  the smallest positive integer $m$ for which  there exist $m$ points $\xi^1,\ldots, \xi^m\in\Omega$ such that
\[
C_1\|f\|_{L_p(\Omega, \mu)}^p \leq \frac{1}{m} \sum_{j=1}^m |f(\xi^j)|^p \leq C_2\|f\|_{L_p(\Omega, \mu)}^p,\   \ \forall f\in X_N.
\]
For simplicity, we denote this quantity by $m(X_N; p)$ when the specific values of the absolute constants $C_1, C_2$ are  understood or not important from context.
}
    	
{According to Theorem \ref{IT1}, for every space $X_N$ satisfying the $(2, \infty)$-Nikol'skii inequality $\textnormal{NI}_{2, \infty}(\sqrt{KN})$},
we have
\begin{equation}\label{8-3}
m (X_N; p)\leq C_p\begin{cases}
KN \log^3(KN),&\  \ \text{if $1\leq p\leq 2$};\\
(KN)^{p/2} \log^2 (KN), &\  \ \text{if $2<p<\infty$}.
\end{cases}
\end{equation}
The  upper bounds in \eqref{8-3} turn out to be  nearly optimal as $N\to \infty$	for a general space $X_N\in \textnormal{NI}_{2, \infty}(\sqrt{KN})$.
Indeed, this is clear for $1\leq p\leq 2$ as the lower bound  $m (X_N; p)\ge N$
holds trivially.   For $2<p<\infty$,  it is known (see, for instance,  \cite[{\bf D.20}]{KKLT})} that for  $X_N=\spn\{ r_1, \ldots, r_N\}$ with     	
$r_j(t):=\text{sgn} (\sin ( 2^{j+1}\pi t))$, $t\in [0, 1]$, $j\in\NN$  being the sequence of  Rademacher functions on $[0, 1]$ and for the ususal Lebesgue measure on $\Omega=[0, 1]$,  the $(2, \infty)$-Nikol'skii inequality $\textnormal{NI}_{2, \infty}(\sqrt{KN})$ is satisfied   with $K=1$, and  $m (X_N; p)\ge c N^{p/2}$  for any $2<p<\infty$,  where $c>0$ is an absolute constant.

    {	
It is natural to ask  whether the upper bounds in  \eqref{8-3} remain valid  without the extra  logarithmic  factors under the condition $X_N\in \textnormal{NI}_{2, \infty}(\sqrt{KN})$.  For the  discretization of $L_2$-norm, the answer  is positive, due to the breakthrough work of A.~Marcus, D.~Spielman, and N.~Srivastava \cite{MSS}. In fact, according to
 Theorem \ref{thm-4-2}, we have
\[	
m (X_N; 2; 1/2, 3/2)\leq C KN.
\]
For $1\leq p<2$, this remains a   challenging open problem.}
    	
There has been recent progress on this problem for $1\leq p<2$.  It was established  in \cite{DKT} that
for $1\leq p<2$ and every $X_N\in \textnormal{NI}_{2, \infty}(\sqrt{KN})$, the following refined upper bound holds:  $	m(X_N; p; 1/2, 3/2)\leq C_p  \Phi_p(KN)$,
where
\begin{equation} \label{8-4} \Phi_p(t):= \begin{cases}
t\log t,   &\  \ \text{if $p=1$, $2$},\\
t(\log t) (\log\log t )^2 ,   &\  \ \text{if $1<p< 2$},\end{cases}\   \ t\ge 2.
\end{equation}
More precisely, one has

\begin{Theorem}	[\cite{DKT}]\label{thm-8-1}
Given   $1\leq p\leq 2$,   $0<\epsilon\leq \frac12$, $K\ge 2$ and any $N$-dimensional subspace
$X_N\in \textnormal{NI}_{2, \infty}(\sqrt{KN})$,
there exist
$ \xi^1,\ldots, \xi^m\in\Omega$  with $m\leq C_p(\epsilon)\Phi_p(KN)$
such that
\begin{equation}\label{eq-00}
(1-\epsilon) \|f\|^p_{L_p(\Omega, \mu)} \leq  \frac 1m \sum_{j=1}^m  |f(\xi^j)|^p \leq (1+\epsilon) \|f\|_{L_p(\Omega, \mu)}^p,\   \ \forall f\in X_N,
\end{equation}		
where $\Phi_p$ is given in \eqref{8-4},
$C_p(\epsilon)= C \epsilon^{-2} $ for $p=1,2$, and $C_p(\epsilon)=C_p\epsilon^{-2}\log^3 \frac 1\epsilon$ for $1<p<2$.
\end{Theorem}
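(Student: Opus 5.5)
The plan is to prove Theorem~\ref{thm-8-1} by first reducing to a finite sampling space and then shrinking the point set by a recursive halving procedure. The halving step is the Bourgain--Lindenstrauss--Milman/Talagrand splitting argument: a random selection of half the points changes $\|f\|_p^p$ only by a relative amount controlled by a chaining bound in the $L_\infty$ metric.

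\textbf{Step 1: a large initial point set.} I would apply Proposition~\ref{prop-6-2} with $L=1$, $s=N$ and $R=KN$, together with its $L_2$ version. This gives a set $\Omega_{m_0}$ with $m_0\asymp KN^2\epsilon^{-2}\log\frac1\epsilon$ on which both the $L_p$ and the $L_2$ norms of every $f\in X_N$ are preserved up to a factor $1\pm\epsilon/10$. In particular, $X_N$ restricted to $\Omega_{m_0}$ still satisfies $\|f\|_{L_\infty(\Omega_{m_0})}\le\sqrt{2KN}\,\|f\|_{L_2(\Omega_{m_0})}$. From now on everything lives in $\mathbb R^{n}$ with the normalized counting measure, starting from $n=m_0$.

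\textbf{Step 2: one halving step.} Let $\Omega_n$ be the current set. Pair up its points (or use i.i.d.\ signs $\varepsilon_j$) and keep a subset $\Omega_{n/2}$ for which
\[
\sup_{f\in X_N^p(\Omega_n)}\Bigl|\frac1n\sum_{j}\varepsilon_j|f(x_j)|^p\Bigr|\le\delta_n .
\]
To bound $\delta_n$, I would use symmetrization and Dudley/Talagrand chaining for the Rademacher process $f\mapsto\sum_j\varepsilon_j|f(x_j)|^p$. Its increments are controlled in the random metric $\bigl(\frac1n\sum_j(|f|^p-|g|^p)^2(x_j)\bigr)^{1/2}$. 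This metric is bounded by
\[
\|f-g\|_{L_\infty(\Omega_n)}^{p/2}\,\sup\|h\|_{L_p(\Omega_n)}^{p/2}\cdot n^{-1/2}
\]
(for $p\le2$ one uses $\bigl||a|^p-|b|^p\bigr|\le|a-b|^{p/2}(\ldots)$). Feeding in the entropy bound of Theorem~\ref{thm-5-2} with $M\asymp KN$, in the form of Remark~\ref{rem-ent}, gives roughly
\[
\delta_n\lesssim\Bigl(\frac{KN\,\log n\,(\log KN)^{1-p/2}}{n}\Bigr)^{1/2}\cdot\text{(chaining log factor)}.
\]
If the Nikol'skii constant is kept under control, so that it stays $\le 2KN$ in $L_2$ after each step, then by Lemma~\ref{lem-1-2} the new set preserves the $L_p$ and $L_2$ norms within $1\pm\delta_n$. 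I would carry the $L_2$ norm along via Theorem~\ref{thm-4-2}-type arguments, or simply treat $p=2$ in parallel.

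\textbf{Step 3: iteration and summation.} I would halve repeatedly from $m_0$ down to $m\asymp C_p(\epsilon)\Phi_p(KN)$. The errors $\delta_n$ grow geometrically as $n$ decreases, like $n^{-1/2}$ up to logs, so $\prod(1+\delta_n)$ is dominated by the last step. It therefore suffices that $\delta_m\lesssim\epsilon$. The crude $\log n$ factor coming from Theorem~\ref{thm-5-2} is where the extra $\log^2$ of Theorem~\ref{IT1} would come from. To reach $t\log t$, one must exploit that at the final stages $n\le \mathrm{poly}(KN)$, so $\log n\asymp\log(KN)$. For $p=1$ and $p=2$ the $(\log M)^{1/p-1/2}$ factor and the chaining integral collapse to a single $\log$: Talagrand's sharp bound is available at $p=1$, and Rudelson/Tropp at $p=2$.

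\textbf{Main obstacle.} The main obstacle is the case $1<p<2$. There the entropy integral $\int\mathcal H_{\epsilon t^{1/p}}\,dt$ produces one superfluous $\log$, and one has to refine the chaining, splitting each $f$ into its large and small parts in $L_\infty$, to replace it by $(\log\log KN)^2$. This is where I expect the real work to lie, and it determines the constant $C_p\epsilon^{-2}\log^3\frac1\epsilon$.
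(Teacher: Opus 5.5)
Your architecture matches the paper's, and your larger preliminary set from Proposition~\ref{prop-6-2} is harmless. It has three parts.
\begin{enumerate}[\rm (i)]
\item A preliminary discretization.
\item A halving step driven by $\mathbb{E}\sup_{f\in X_N\cap B_p^M}\bigl|\sum_j\varepsilon_j|f(j)|^p\bigr|$, with the case $p=2$ carried along so that the Nikol'skii constant on the surviving set stays of order $KN$ (Lemma~\ref{p1L2}).
\item An iteration whose total error is dominated by the last step.
\end{enumerate}

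The gap is in the halving estimate itself, which is the whole content of the improvement over Theorem~\ref{IT1}. The bound you actually derive, Dudley chaining in $L_\infty(\Omega_n)$ fed with Theorem~\ref{thm-5-2}, only reproduces the old result. By Remark~\ref{rem-ent}, $\mathcal{H}_u\le C_p\,KN\log(KN)\,u^{-p}$, so the Dudley integrand $u^{p/2-1}\sqrt{\mathcal{H}_u}$ behaves like $\sqrt{KN\log(KN)}\,u^{-1}$. It is therefore non-integrable over the scales between the volumetric regime and the diameter $(2KN)^{1/p}$, a range of logarithmic length of order $\log N$. Hence $\delta_n\approx\sqrt{KN\log(KN)/n}\,\log N$, and $\delta_m\le c\epsilon$ forces $m\ge c\,\epsilon^{-2}KN\log(KN)\log^2 N$, i.e.\ the $\log^3$ loss of Theorem~\ref{IT1}.

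Your proposed remedy misdiagnoses the loss. For fixed $\epsilon$, $m_0$ is polynomial in $KN$, so $\log n\asymp\log(KN)$ at every stage anyway. The $\sqrt{\log n}$ coming from Theorem~\ref{thm-5-2} is precisely the one logarithm that legitimately survives in $\Phi_p$. The two superfluous logarithms come from the chaining factor.

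What is needed is a Rademacher bound whose chaining loss depends on $M/(KN)$, or vanishes.
\begin{enumerate}[\rm (i)]
\item For $p=1,2$ this is Lemma~\ref{p1T2} (Talagrand, Rudelson): the bound $C\sqrt{KN\log N/M}$ with no chaining factor. It is obtained by arguments that do not reduce to Dudley plus Theorem~\ref{thm-5-2}. You mention these results, and if you import them your iteration does give $m\le C\epsilon^{-2}KN\log N$.
\item For $1<p<2$, the case you leave open, the paper relies on Lemma~\ref{lem-8-3}: $\mathbb{E}\sup\le C(p)\sqrt{KN\log M/M}\,\log\bigl(M/(KN)+2\bigr)$. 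The factor $\log(M/(KN)+2)$ is the key point. At the final stage $M\asymp C_p(\epsilon)\Phi_p(KN)$ it is only of order $\log\log(KN)+\log\frac1\epsilon$. Solving $\sqrt{KN\log M/M}\,\log(M/(KN))\le c\epsilon$ then yields exactly $\Phi_p(KN)$ with $C_p(\epsilon)=C_p\epsilon^{-2}\log^3\frac1\epsilon$.
\end{enumerate}
Your suggestion of splitting $f$ into large and small parts does not identify where a $\log(M/(KN))$, rather than a $\log M$, would come from. So for $1<p<2$ the proposal does not reach the stated bound.

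Three smaller points:
\begin{enumerate}[\rm (i)]
\item Lemma~\ref{lem-1-2} is unnecessary once the supremum runs over the whole unit ball.
\item The halving must also control $|I|$, as in Lemma~\ref{p1L2}.
\item The $L_2$ norm should be tracked through the $p=2$ case of the same halving estimate, not through Theorem~\ref{thm-4-2}.
\end{enumerate}
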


{
Historical discussions on sampling discretization for the cases $p=2$ and
$1 \le p < 2$ can be found in Subsections~{\bf D.15} and~{\bf D.16} of~\cite{KKLT},
respectively. Here we only mention that the best previously known results are as follows.
It was proved in~\cite{DPSTT2} that, under the $(2,\infty)$--Nikol'skii inequality assumption
$X_N\in\textnormal{NI}_{2,\infty}(\sqrt{K N})$, the discretization estimate~\eqref{eq-00}
holds for $1 \le p < 2$ provided that
$m \ge C(p,K,\varepsilon)\, N (\log N)^3$.
This estimate on $m$ was further improved to
$m \ge C(p,K,\varepsilon)\, N (\log N)^2$ for $1 < p < 2$ in~\cite{Ko21}.
We also point out that sampling discretization for $p>2$ under the
$(p,\infty)$-Nikol'skii inequality $X_N\in\textnormal{NI}_{p, \infty}((KN)^{1/p})$ was studied in~\cite{Ko21} as well,
where the results were further improved for $p>3$ in~\cite{DT1}.
}

{
Combining Theorem~\ref{thm-8-1} with Lewis' change of density lemma
(see Lemma \ref{lem-6-2}), and following the proof of Corollary~\ref{cor-6-2},
we obtain the following weighted discretization result.
}    		
    		
\begin{Corollary}\label{cor-8-1a}
Given $1\leq p\leq 2$, $0<\epsilon\leq \frac12$ and any $N$-dimensional subspace $X_N\subset L_p(\Omega, \mu)$,
there are  a finite set of points  $\{\xi^1,\ldots,\xi^m\}\subset \Omega$  with  $m\leq C_p(\epsilon)\Phi_p(KN)$
and  a set  of nonnegative weights $\{\lambda_j\}_{j=1}^m$
such that  for any $f\in X_N$,
\[
{(1-\epsilon)\|f\|_{L_p(\Omega,\mu)}^p \leq  \sum_{j=1}^m \lambda_j|f(\xi^j)|^p \leq  (1+\epsilon)\|f\|_{L_p(\Omega,\mu)}^p,}
\]		
where $\Phi_p$ is given in \eqref{8-4},
$C_p(\epsilon)= C \epsilon^{-2} $ for $p=1,2$, and $C_p(\epsilon)=C_p\epsilon^{-2}\log^3 \frac 1\epsilon$ for $1<p<2$.
\end{Corollary}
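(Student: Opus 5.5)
The plan is to reduce Corollary~\ref{cor-8-1a} to Theorem~\ref{thm-8-1} by a change of density, as in the proof of Corollary~\ref{cor-6-2}. Note that in the statement $K$ should be read as an absolute constant. After the change of density, the transformed space satisfies the Nikol'skii condition with $K=1$, so the bound is really $m\le C_p(\epsilon)\Phi_p(2N)$.

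\textbf{Step 1: change of density.} Apply Lemma~\ref{lem-6-2} to $X_N\subset L_p(\Omega,\mu)$ to obtain a basis $\{u_j\}_{j=1}^N$ and the function $F=\bigl(\frac1N\sum_j|u_j|^2\bigr)^{1/2}$ with $\|F\|_{L_p(\Omega,\mu)}=1$ and the orthogonality relation \eqref{6-19}. Set $d\nu=|F|^p\,d\mu$, which is a probability measure. Define $Uf=f/F$ on $\{F\neq0\}$ and $Uf=0$ on $\{F=0\}$.

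\textbf{Step 2: properties of the transformed space.} Every $f\in X_N$ vanishes on $\{F=0\}$, since $f$ is a combination of the $u_j$. Hence $\|Uf\|_{L_p(\nu)}=\|f\|_{L_p(\mu)}$ for all $f\in X_N$. The functions $\psi_j=Uu_j$ are orthonormal in $L_2(\nu)$ by \eqref{6-19}, and $\sum_j|\psi_j|^2=N$ on $\{F\ne0\}$ and $0$ elsewhere. By \eqref{Nik-equiv}, this gives $UX_N\in\textnormal{NI}_{2,\infty}(\sqrt N)\subset \textnormal{NI}_{2,\infty}(\sqrt{2N})$. The space $UX_N$ consists of bounded functions because $|Uf|\le \sqrt N\|Uf\|_{L_2(\nu)}$ pointwise, and $UX_N$ has dimension $N$ since $U$ is injective on $X_N$.

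\textbf{Step 3: apply Theorem~\ref{thm-8-1} and transfer back.} Apply Theorem~\ref{thm-8-1} to $UX_N$ on the probability space $(\Omega,\nu)$ with $K=2$. This gives points $\xi^1,\dots,\xi^m$ with $m\le C_p(\epsilon)\Phi_p(2N)$ and
\[
(1-\epsilon)\|Uf\|_{L_p(\nu)}^p\le\frac1m\sum_{j=1}^m|Uf(\xi^j)|^p\le(1+\epsilon)\|Uf\|_{L_p(\nu)}^p .
\]
Set $\lambda_j=\frac1m|F(\xi^j)|^{-p}$ when $F(\xi^j)\neq0$, and $\lambda_j=0$ otherwise. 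Then $\frac1m|Uf(\xi^j)|^p=\lambda_j|f(\xi^j)|^p$ for each $j$, and together with Step 2 this yields the corollary.

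\textbf{Main obstacle.} The only delicate point is the behavior on the null set $\{F=0\}$. We must check that $\nu$-discretization points landing there contribute nothing on either side of the inequality. This holds because both $f$ and $Uf$ vanish there, and it fixes the convention $\lambda_j=0$, or any value, at such points. Apart from this, the argument is purely formal once Theorem~\ref{thm-8-1} is available.
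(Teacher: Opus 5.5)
Your proposal is correct and follows the route the paper indicates. The paper uses Lewis' change of density (Lemma~\ref{lem-6-2}) to pass to $UX_N\subset L_p(\Omega,\nu)$ with $UX_N\in\textnormal{NI}_{2,\infty}(\sqrt{N})$, then applies Theorem~\ref{thm-8-1} with an absolute constant in place of $K$, and transfers back with weights $\lambda_j=\frac1m|F(\xi^j)|^{-p}$, exactly as in Corollary~\ref{cor-6-2}. Your convention $Uf=0$ on $\{F=0\}$ (rather than the paper's $Uf=1$) is the right one because it keeps $U$ linear, and your handling of deterministic nodes that land in $\{F=0\}$ covers the only detail that differs from the random setting of Corollary~\ref{cor-6-2}.
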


We give a brief description of the scheme of the proof of Theorem \ref{thm-8-1}, which can be  divided into three steps.\\

{\bf Step 1.}\  \ Preliminary discretization.\\

In this step,  we establish results on simultaneous discretization of the $L_2$ and $L_p$ norms, using   $M \le C_p \e^{-r_1}N(\log N)^{r_2}$ points.  This step allows us to reduce the original problem of discretization to a problem in $\RR^M$.
The main ingredient in this step is the following lemma, which follows directly from Proposition \ref{prop-6-2}.

\begin{Lemma}\label{p1L1} Let   $1\leq p<2$ and $ 0< \epsilon_0 < 1/4$. Let $X_N$ be a subspace of $L_\infty(\Omega)$ of dimension $N$ such that $X_N\in \textnormal{NI}_{2, \infty}(\sqrt{KN})$ for some $K\ge 2$.    Then  there exists a finite set of points
$x_1,\ldots, x_m\in  \Omega$   with
$	m\leq C_p\epsilon_0^{-8} K N(\log (KN))^{3}$
such that for  any $f\in X_N$,  we have
\[
(1-\epsilon_0) \|f\|_{{L_p(\Omega, \mu)}}^p \leq \frac 1m \sum_{j=1}^m |f(x_j)|^p  \leq (1+\epsilon_0) \|f\|_{{L_p(\Omega, \mu)}}^p,\label{8-6:2024}
\]
and
\[
(1-\epsilon_0) \|f\|_{{L_2(\Omega, \mu)}}^2 \leq \frac 1m \sum_{j=1}^m |f(x_j)|^2  \leq (1+\epsilon_0) \|f\|_{{L_2(\Omega, \mu)}}^2.\label{8-7:2024}
\]
\end{Lemma}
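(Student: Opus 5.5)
The plan is to apply Proposition \ref{prop-6-2} twice to the same sample of IID random points drawn from $\mu$: once with exponent $p$ and once with exponent $2$. In both applications we take $L=1$, $V_1=X_N$ and $s=N$. We then intersect the two high-probability events and conclude that some realization of the points satisfies both inequalities. Because the final statement is purely existential, no probability bound needs to be tracked beyond its being positive.

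First we check the hypothesis \eqref{6-8:2024} for each exponent. The assumption $X_N\in \textnormal{NI}_{2,\infty}(\sqrt{KN})$ gives $\|f\|_{L_\infty(\Omega)}\le (KN)^{1/2}\|f\|_{L_2(\Omega,\mu)}$. So for $p=2$ the hypothesis holds with $R=KN$. For $1\le p<2$, the implication \eqref{1-7} (equivalently \eqref{Nik-impl} with $M=KN$) gives $\|f\|_{L_\infty(\Omega)}\le (KN)^{1/p}\|f\|_{L_p(\Omega,\mu)}$, which is again \eqref{6-8:2024} with $R=KN$.

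Next we apply Proposition \ref{prop-6-2} with $\epsilon=\epsilon_0/5$. This is admissible, since $\epsilon_0<1/4$ gives $\epsilon\le 1/8$. The conclusion \eqref{6-9-b} then becomes the bounds with $1\pm\epsilon_0$. The required sample size is
\[
m\ge 16KN\cdot 25\epsilon_0^{-2}\, N\log\frac{15}{\epsilon_0}.
\]
For each of the two exponents, the failure probability is at most $2\exp\bigl(-\frac{m\epsilon_0^2}{400KN}\bigr)$. With $m$ at the threshold, this is at most $2\exp(-N\log(15/\epsilon_0))\le 2\cdot 15^{-1}<1/2$. The sum of the two failure probabilities is therefore below $1$, and a common good realization $x_1,\dots,x_m$ exists.

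The step I expect to be the main obstacle is reconciling this sample size with the claimed bound $m\le C_p\epsilon_0^{-8}KN(\log(KN))^3$. Proposition \ref{prop-6-2} as stated gives $m\asymp \epsilon_0^{-2}KN^2\log(1/\epsilon_0)$, which is quadratic in $N$, whereas the lemma asks for $N$ times a polylogarithmic factor. Literally, then, the lemma does not follow from Proposition \ref{prop-6-2} alone; the needed ingredient is the two-stage argument from the proof of Theorem \ref{IT1}. Concretely:
\begin{enumerate}[(1)]
\item Take a large auxiliary sample $\Omega_{m_1}$ of size $m_1\sim KN^2\epsilon_0^{-2}\log\frac1{\epsilon_0}\cdot m$. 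By Proposition \ref{prop-6-2}, on $\Omega_{m_1}$ both the $L_p$ and the $L_2$ norms are $\frac{\epsilon_0}{10}$-discretized. This transfers the Nikol'skii bound to $\Omega_{m_1}$ with $R=2KN$.
\item Subsample $m$ points from $\Omega_{m_1}$ via the block partition. Apply Corollary \ref{cor-6-2a}(i) twice, once with exponent $p$ and once with exponent $2$, each with $L=1$, $n=m_1$ and $R=2KN$. Since $\log m_1\asymp\log(KN)$, the corollary requires $m\gtrsim C_p(\epsilon_0)KN\log^3(KN)$.
\item With $C_p(\epsilon_0)=C_p\epsilon_0^{-5-p}\log^2\frac1{\epsilon_0}$ and $\epsilon_0$ rescaled by a constant factor, this yields a sample size within the stated $C_p\epsilon_0^{-8}KN(\log KN)^3$.
\end{enumerate}
A union bound over the two exponents, combined with the triangle inequality between the two discretization stages, then produces a common point set satisfying both inequalities.
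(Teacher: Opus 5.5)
Your argument is correct, and your diagnosis of the shortcut is right. The paper only remarks that this lemma ``follows directly from Proposition~\ref{prop-6-2}''. But with $L=1$, $s=N$, $R=KN$ that proposition needs $m\ge 16KN^2\epsilon^{-2}\log\frac3\epsilon$, which is quadratic in $N$ and cannot give $KN(\log KN)^3$. The stated bound is really Theorem~\ref{IT1}(i), whose proof uses Proposition~\ref{prop-6-2} only as the preliminary step; your items (1)--(3) reproduce that proof.

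You can shorten it. Theorem~\ref{IT1}(i) covers all $1\le p\le 2$, so apply it twice to one IID sample, with exponents $p$ and $2$ and with $A=2$. Both inequalities then hold with probability at least $1-2(KN)^{-2}\ge\frac12$ (since $KN\ge 2$), so a good realization exists.

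What remains is the $\epsilon_0$-dependence, which the paper leaves implicit in Theorem~\ref{IT1}. It comes from Corollary~\ref{cor-6-2a}(i): $\epsilon^{-5-p}\log^2\frac1\epsilon$ for exponent $p$ and $\epsilon^{-7}\log^2\frac1\epsilon$ for exponent $2$. There is one extra factor $\log\frac1{\epsilon_0}$, because $\log m_1\le C\bigl(\log(KN)+\log\frac1{\epsilon_0}\bigr)$; your claim $\log m_1\asymp\log(KN)$ hides this. All of this is at most $C\epsilon_0^{-8}$, which is consistent with the exponent in the statement.

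In short, the one-line route through Proposition~\ref{prop-6-2} gives the better $\epsilon_0^{-2}$ dependence but loses a full factor $N$. The two-stage route gives the claimed $KN(\log KN)^3$ at the price of $\epsilon_0^{-8}$.
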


{\bf Step 2.}\  \  Estimate  the expectation $	\EE \Bl( \sup_{f\in X_N\cap B_p^M} \Bl| \sum_{j=1}^M { \varepsilon_j}  |f(j)|^p\Br| \Br)$, where
$\{ \varepsilon_j\colon  j=1,2,\ldots, M\}$ denotes
a sequence  of independent Bernoulli random variables taking
values $\pm 1$ with probability $1/2$,  and $B_p^M:=\{f\in \RR^M\colon  \|f\|_{\ell_p^M}\!\leq\! 1\}$.	
\\
    	
For $p=1,2$, we have the following lemma, which   was proved by Rudelson \cite[Lemma 1]{Rud1} for $p=2$, and
by   Talagrand \cite{Ta90} for $p=1$
(see also \cite[Theorem 3.1]{DKT}, \cite[Theorem 13]{JS} and  \cite[Proposition 15.16]{LedTal}).

\begin{Lemma}\label{p1T2} Let $X_N$ be a subspace of $\RR^M$ of dimension at most $N$ satisfying
\begin{equation*}\label{8-8:2024}
\|f\|_{\infty}\le \sqrt{KN} \|f\|_{L_2^M},\     \  \   \ \forall f\in X_N
\end{equation*}
for a constant $K\ge 1$.
Then for $p=1$ and $p= 2$ we have
\[
\EE \Bl( \sup_{f\in X_N\cap B_p^M} \Bl| \sum_{j=1}^M { \varepsilon_j}  |f(j)|^p\Br| \Br) \leq C
\sqrt{\frac {KN\log N}M},
\]
where $C$ is a positive absolute constant.
\end{Lemma}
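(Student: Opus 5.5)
For $p=2$ I would argue exactly as Rudelson does. Let $u_1,\dots,u_N$ be an orthonormal basis of $X_N$ with respect to the unnormalized inner product of $\ell_2^M$, and put $v_j:=(u_1(j),\dots,u_N(j))\in\RR^N$ for $j=1,\dots,M$. Then $\sum_{j=1}^M v_j\otimes v_j=I$, and every $f\in X_N\cap B_2^M$ can be written as $f(j)=\<\mathbf a,v_j\>$ with $|\mathbf a|\le1$. Hence
\[
\sup_{f\in X_N\cap B_2^M}\Bl|\sum_{j=1}^M\varepsilon_j|f(j)|^2\Br|=\Bl\|\sum_{j=1}^M\varepsilon_j\, v_j\otimes v_j\Br\|,
\]
the spectral norm of a Rademacher matrix series. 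The Nikol'skii assumption gives, for every $j$,
\[
|v_j|^2=\sup_{\|f\|_{\ell_2^M}\le1}|f(j)|^2\le KN\,\|f\|_{L_2^M}^2\le \frac{KN}{M},
\]
since $\|f\|_{L_2^M}^2=M^{-1}\|f\|_{\ell_2^M}^2$. Rudelson's lemma (the noncommutative Khintchine inequality of Lust-Piquard--Pisier applied to rank-one matrices) yields
\[
\EE\Bl\|\sum_j\varepsilon_j v_j\otimes v_j\Br\|\le C\sqrt{\log N}\,\max_j|v_j|\,\Bl\|\sum_j v_j\otimes v_j\Br\|^{1/2}.
\]
Inserting $\max_j|v_j|\le\sqrt{KN/M}$ and $\|\sum_j v_j\otimes v_j\|=1$ gives the claimed bound $C\sqrt{KN\log N/M}$. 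The only point needing care is that the logarithm is of the dimension $N$ of the matrices, not of $M$.

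For $p=1$ I would follow Talagrand's chaining argument. The process $f\mapsto\sum_j\varepsilon_j|f(j)|$ is subgaussian with respect to the metric $d(f,g)=\bigl(\sum_j(|f(j)|-|g(j)|)^2\bigr)^{1/2}$. For $f,g\in X_N\cap B_1^M$ this metric satisfies
\[
d(f,g)^2\le\|f-g\|_{\infty}\sum_{j=1}^M\bigl(|f(j)|+|g(j)|\bigr)\le 2\|f-g\|_\infty .
\]
Dudley's entropy integral therefore bounds the expectation by $C\int_0^{D}\sqrt{\cH_{t^2/2}(X_N\cap B_1^M,\ell_\infty)}\,dt$. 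Here the diameter satisfies $D\lesssim\sqrt{\sup_f\|f\|_\infty}\lesssim\sqrt{KN/M}$, which follows from \eqref{Nik-impl} together with $\|f\|_{L_1^M}\le 1/M$.

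To estimate the entropy, I would split the range of $t$ into two regimes.
\begin{enumerate}[\rm (i)]
\item For small $t$ I would use the volumetric bound \eqref{5-2}, which gives $N\log(C/t)$.
\item For the main range I would use Theorem~\ref{thm-5-2} with $p=1$ via Remark~\ref{rem-ent}. After rescaling by $1/M$, this gives $\cH_u\lesssim \sqrt{\log M\,\log(KN)}\;KN/(Mu)$.
\end{enumerate}
Integrating, each dyadic range of $t$ contributes about $\sqrt{KN/M}$ up to logarithmic factors.

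The main obstacle is removing the $\sqrt{\log M}$ and the extra logarithm produced by the number of dyadic scales, so as to obtain exactly $\sqrt{KN\log N/M}$. First I would try to reduce to the $p=2$ case. One applies Lewis' change of density (Lemma~\ref{lem-6-2} with $p=1$ on $\{1,\dots,M\}$) and then compares the $\ell_1$ process with the $\ell_2$ process through the identity $|f|=|f|^2/|f|$ and a contraction/decoupling step, as in Ledoux--Talagrand, Proposition~15.16. The aim of this step is to bound the $p=1$ supremum by the geometric mean of the $p=2$ Rudelson bound and the quantity itself. This would produce a self-improving inequality $E\le C\sqrt{KN\log N/M}\,\sqrt{1+E}$, which solves to the claimed estimate.
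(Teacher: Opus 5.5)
Your $p=2$ argument is correct, and it is exactly Rudelson's lemma, which is what the paper cites for that case; the paper itself gives no proof. You write the supremum as $\bigl\|\sum_j\varepsilon_j v_j\otimes v_j\bigr\|$, bound $|v_j|^2\le KN/M$, and apply noncommutative Khintchine in $S_q$ with $q\sim\log N$, so the logarithm is indeed that of the dimension. (For $N=1$ the displayed bound is $0$, so $\log N$ should be read as $\log(N+1)$.)

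For $p=1$ the proposal does not prove the lemma. The gap sits at the step you yourself call the main obstacle. Your chaining computation is correct, but what it actually yields is
\[
\EE\sup_{f\in X_N\cap B_1^M}\Bigl|\sum_{j=1}^M\varepsilon_j|f(j)|\Bigr|\lesssim\sqrt{\frac{KN}{M}}\Bigl(\sqrt{\log N}+\bigl(\log M\,\log(KN)\bigr)^{1/4}\log N\Bigr).
\]
This bound depends on $M$. Even when $M\le (KN)^{C}$ and $K$ is bounded, it exceeds $\sqrt{KN\log N/M}$ by a factor of order $\log N$. Both losses are built into Dudley's integral with $\ell_\infty^M$ covering numbers: those numbers see all $M$ coordinates, and the $1/u$ entropy profile costs a logarithm over the scales. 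Better bookkeeping of the same integral cannot remove them.

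The proposed repair is not an argument.
\begin{itemize}
\item The contraction principle runs in the wrong direction for what you need. It lets you replace $|f(j)|$ by $f(j)$, or bound the $|f|^2$-process by $2\sup_f\|f\|_\infty$ times the linear process. It does not let you dominate the $|f|$-process by the $|f|^2$-process: $t\mapsto\sqrt t$ is not Lipschitz at $0$, and writing $|f|=|f|^2/|f|$ introduces $f$-dependent, unbounded weights.
\item There is nothing to bootstrap. The only randomness here is in the signs, and $\sum_j|f(j)|\le1$ is a deterministic constraint, so no factor $\sqrt{1+E}$ can arise. Self-improving inequalities of that shape appear only when the sample points are random and an empirical norm must be controlled, as in Rudelson's theorem on empirical covariance matrices.
\item Lewis' change of density only reweights the Rademacher sum; it does not supply the missing comparison.
\end{itemize}

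What is needed for $p=1$ is the estimate the paper cites, Talagrand's argument \cite{Ta90} (see also \cite[Proposition~15.16]{LedTal}, \cite[Theorem~13]{JS}, \cite[Theorem~3.1]{DKT}). It is genuinely sharper than Dudley's bound with uniform entropy, and that is what produces the $M$-independent factor $\sqrt{\log N}$.
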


For $1<p<2$, following  Talagrand's proof of \cite[Theorem 16.8.2]{Ta},  we have
\begin{Lemma}[{\cite[Theorem 4.2]{DKT}}]\label{lem-8-3}
Let $X_N$ be a  subspace of $\RR^M$ of dimension at most $N$ satisfying
\begin{equation*}\label{5-1-Ta}
\|f\|_{\infty}\le \sqrt{KN} \|f\|_{L_2^M},\     \  \   \ \forall f\in X_N
\end{equation*}
for some constant  $K\ge 1$.
Then for any $p\in (1,2)$, we have
\[
\EE \Bl( \sup_{f\in X_N\cap B_p^M } \Bl| \sum_{j=1}^M |f(j)|^p\varepsilon_j  \Br|\Br) \leq C(p) \sqrt{\frac {KN\log M} M} \log \Bl(\frac M{KN}+2\Br).
\]
\end{Lemma}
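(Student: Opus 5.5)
We bound the Rademacher process $\sup_{f\in X_N\cap B_p^M}\bigl|\sum_j \varepsilon_j |f(j)|^p\bigr|$ by chaining, following Talagrand's argument for \cite[Theorem 16.8.2]{Ta}. First we normalize. Set $\mathcal{K}:=X_N\cap B_p^M$. With the normalized norms $\|f\|_{L_q^M}$ the Nikol'skii assumption reads $\|f\|_\infty\le\sqrt{KN}\|f\|_{L_2^M}$, and by \eqref{Nik-impl} this gives $\|f\|_{L_2^M}\le (KN)^{\frac1p-\frac12}\|f\|_{L_p^M}$ and $\|f\|_\infty\le (KN)^{1/p}\|f\|_{L_p^M}$. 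For $f\in\mathcal{K}$ we have $\|f\|_{L_p^M}\le M^{-1/p}$, hence $\|f\|_\infty\le (KN/M)^{1/p}$. Conditionally on nothing, the process $Z_f:=\sum_j\varepsilon_j|f(j)|^p$ is subgaussian with respect to the metric
\[
d(f,g):=\Bigl(\sum_{j}\bigl(|f(j)|^p-|g(j)|^p\bigr)^2\Bigr)^{1/2}.
\]
So it suffices to bound the Dudley entropy integral (or a $\gamma_2$-functional) of $\mathcal{K}$ in $d$.

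The key pointwise estimate is $\bigl||a|^p-|b|^p\bigr|\le p|a-b|\,(|a|^{p-1}+|b|^{p-1})$. It gives
\[
d(f,g)\le p\,\|f-g\|_\infty\Bigl(\sum_j(|f(j)|+|g(j)|)^{2p-2}\Bigr)^{1/2}.
\]
Since $2p-2<p$ when $p<2$, Hölder's inequality and $\|\cdot\|_\infty$ control bound this sum by $\|f\|_{\ell_p}^{p}\cdot\|f\|_\infty^{p-2}$, i.e. the naive bound is not usable directly. Instead I would split the coordinates according to the size of $|f(j)|$ relative to dyadic levels, as Talagrand does. The resulting bound is $d(f,g)\lesssim \|f-g\|_\infty^{p/2}$ when $f,g$ are close in sup norm, plus a level-dependent term. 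This reduces the chaining to covering numbers of $\mathcal{K}$ in $\|\cdot\|_\infty=\|\cdot\|_{L_\infty(\Omega_M)}$, where $\Omega_M$ is the index set $\{1,\dots,M\}$.

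Those covering numbers come from Theorem \ref{thm-5-2}, applied with $\Omega=\Omega_M$, the uniform measure, $m=M$, and Nikol'skii constant $KN$. In entropy form (Remark \ref{rem-ent}), after rescaling from the normalized ball,
\[
\mathcal{H}_t\bigl(\mathcal{K},\|\cdot\|_\infty\bigr)\le C_p(\log M)^{p/2}(\log KN)^{1-p/2}\,KN\,(tM^{1/p})^{-p},
\]
together with the volumetric bound \eqref{5-2}, $\mathcal{H}_t\le N\log(3(KN/M)^{1/p}/t)$, for very small $t$. Plugging into Dudley's integral $\int_0^{\mathrm{diam}}\sqrt{\mathcal{H}_{u^{2/p}}}\,du$ (scale $u$ in $d$ corresponds to $t\approx u^{2/p}$ in sup norm), the entropy term $\sqrt{\cdots}\,u^{-1}$ integrates to a logarithm of the ratio between the diameter $\approx (KN/M)^{1/2}$ and the cutoff scale. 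The cutoff is taken where the volumetric bound takes over, near $(KN/M)^{1/2}\cdot(KN/M)^{c}$. This produces $\sqrt{KN\log M/M}\,\log(M/(KN)+2)$; the factor $(\log KN)^{1-p/2}$ is absorbed by using the variant of the entropy bound stated with $\log M$ only, or by noting that $KN\le M$ in the relevant range.

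The main obstacle is the level-splitting step: $x\mapsto|x|^p$ is only $p$-Hölder-like near $0$ and Lipschitz with a large constant near $\|f\|_\infty$. Controlling $d$ by sup-norm distances uniformly over $\mathcal{K}$, without losing a power of $M/(KN)$, is delicate. I would first try Talagrand's decomposition $f=\sum_k f\mathbf 1_{\{2^{-k-1}<|f|\le 2^{-k}\}}$. On each level set the cardinality is at most $2^{kp}\|f\|_p^p$, so the relevant $\ell_2$-sum is bounded there; summing the $\approx\log(M/KN)$ levels produces the extra logarithm in the statement.
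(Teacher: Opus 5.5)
\textbf{There is a genuine gap: the Dudley computation you present as routine proves a weaker bound.}
Dudley's integral with the two entropy bounds you quote does not produce the factor $\log(\frac{M}{KN}+2)$.

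Put $a:=(KN/M)^{1/p}$ and $t=a/s$ with $s\ge 1$. Using $KN\le M$, your bound from Theorem~\ref{thm-5-2} reads $\mathcal{H}_t(\mathcal{K},\|\cdot\|_\infty)\lesssim_p (\log M)\,s^p$. The volumetric bound \eqref{5-2} gives $N\log(3s)$. The volumetric bound is the smaller one only once $s^p\gtrsim N\log(3s)/\log M$, i.e.\ for $s$ of order $N^{1/p}$ up to logarithmic factors. It is not the smaller one for $s\approx (M/KN)^{c}$.

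Since $u\asymp t^{p/2}=\sqrt{KN/M}\,s^{-p/2}$, the crossover in the metric $d$ sits at $u_*\approx\sqrt{KN/M}\,N^{-1/2}$. There are about $\frac12\log_2 N$ dyadic scales between $u_*$ and the diameter, and each contributes $\sqrt{KN\log M/M}$ to the bound. So the integral yields $\sqrt{KN\log M/M}\,\log N$.

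If you cut instead at $u_0=\sqrt{KN/M}\,(KN/M)^{c}$ and use the volumetric bound below it, that piece is $\gtrsim u_0\sqrt N$. This exceeds $\sqrt{KN\log M/M}$ unless $(M/KN)^{c}\gtrsim\sqrt{N/\log M}$. So your computation proves the lemma essentially only when $M\ge KN\cdot N^{c}$.

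In the regime where the lemma is used ($M$ of order $KN$ times a power of $\log(KN)$, cf.\ Lemma~\ref{p1L1}), $\log(M/KN)\asymp\log\log(KN)$. The gap between $\log N$ and this is exactly the gap between the earlier $N(\log N)^3$-type results and $\Phi_p$ in Theorem~\ref{thm-8-1}. That improvement is the whole content of Lemma~\ref{lem-8-3}.

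\textbf{Your last paragraph does not supply the improvement, and it places the difficulty in the wrong spot.} Comparing $d$ with the sup metric is easy. Write $|f-g|^2=|f-g|^p\,|f-g|^{2-p}$ and apply H\"older with exponents $\frac{p}{2-p},\frac{p}{2p-2}$ to your pointwise estimate. This gives $d(f,g)\le p\,2^{p/2}\|f-g\|_\infty^{p/2}$ for all $f,g\in\mathcal{K}$, with no level splitting. Pulling out only $\|f-g\|_\infty^{2}$ instead gives $d(f,g)\le p\,2^{p-1}M^{\frac1p-\frac12}\|f-g\|_\infty$, which is better once $\|f-g\|_\infty\le M^{-1/p}$.

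The real difficulty is the number of scales on which increments have full size. These run from $(KN/M)^{1/p}$ down to $M^{-1/p}$. That is about $\frac1p\log_2(KN)$ dyadic levels, not $\log(M/KN)$ as you assert. A per-level bound of $\sqrt{KN\log M/M}$, which is what your level-set cardinality count gives, therefore again sums to a factor $\log(KN)$.

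Sharper covering numbers cannot rescue a plain entropy integral either. Take real trigonometric polynomials of degree $n$ sampled at $M$ equispaced points, so $K=1$. Sums of $k$ well-separated $L_p$-normalized de la Vall\'ee Poussin kernels show that the covering numbers of $\mathcal{K}$ in $d$ at scale $\tau^{p/2}$ are $\gtrsim \frac NM\tau^{-p}\log(M\tau^p)$ for $1\lesssim M\tau^p\lesssim N$. Hence for $M\asymp N\log^2N$, Dudley's integral itself is $\gtrsim\sqrt{N/M}\,(\log N)^{3/2}$, which is larger than the claimed bound.

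So some argument that genuinely beats the entropy integral is required. The survey defers this to Talagrand's proof of \cite[Theorem 16.8.2]{Ta}. Your proposal leaves exactly that step as an unexecuted plan. Your parenthetical ``or a $\gamma_2$-functional'' is where it would have to happen, but no admissible sequences are constructed.
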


For $1<p<2$, Talagrand \cite[Theorem 16.8.2]{Ta} proved a result similar to Lemma \ref{lem-8-3}
for a probability measure $\mu$ on $\RR^M$ satisfying $\mu\{j\}\leq \frac 2M$ for $1\leq j\leq M$, under the following  stronger assumption:
for each  orthonormal basis  $\{\varphi_k\}_{k=1}^N$ of $(X_N,\|\cdot\|_{L_2(\mu)})$,     		
\[ \frac 1N \sum_{k=1}^N \varphi_k(j)^2 =1,\   \ j=1,2,\ldots, M.\]

Lemmas~\ref{p1T2} and~\ref{lem-8-3} play crucial roles in the proof of 	Theorem~\ref{thm-8-1}. Roughly speaking, they allow us to reduce the number of points required for good discretization by
{approximately a factor of two.}\\

{\bf Step 3.}\   \ Iteration.\\

To illustrate the idea, we sketch the proof of $p=1$ below.

First, by Lemma \ref{p1L1}  in the preliminary step, without loss of generality, we may assume that $\Omega=\Omega_M=\{1,2,\ldots, M\}$
and $\mu$ is the uniform probability measure on $\Omega_M$.
For each $I\subset \Omega_M$, we denote by $R_I$ the orthogonal projection onto the space spanned by $\{e_i, i\in I\}$, where $e_1=(1,0,\ldots, 0)$, $\ldots$, $e_M=(0, \ldots, 0, 1)$ is a canonical basis of $\RR^M$. Thus,  for each $f\in \RR^M$, $(R_I f)(j)=f(j)$ for $j\in I$, and $(R_I f)(j)=0$ for $j\in\Omega_M\setminus I$.
Second, using Lemma  \ref{p1T2}, one can obtain

\begin{Lemma}\label{p1L2} Let  $X_N$ be a  subspace of $\RR^M$
of dimension at most $N$ satisfying
\[
\|f\|_{\infty}\le \sqrt{KN} \|f\|_{L_2^M},\     \  \   \ \forall f\in X_N.
\]
for some constant $K\ge 1$.  Let   $J\subset \Omega_M:=\{1, 2,\ldots, M\}$.
Assume that there exist positive  constants $\al_J$, $\bt_J$ such that for any $f\in X_N$ we have for both $p=1$ and $p=2$
\[
\al_J \| f\|^p_{\ell_p^M} \leq \|R_J f\|^p_{\ell_p^M} \leq \bt_J  \| f\|^p_{\ell_p^M}.
\]
Then there exists a subset $I\subset J$ with
\begin{equation*}\label{8-13:2024}
\frac {|J|} 2 \Bl(1-\frac 1 {\sqrt{|J|}} \Br) \leq |I|\leq \frac {|J|} 2
\end{equation*}
such that for any $f\in X_N$ we have for both $p=1$ and $p=2$,
\[
\al_I \| f\|^p_{\ell_p^M} \leq \|R_I f\|^p_{\ell_p^M} \leq \bt_I  \| f\|^p_{\ell_p^M}
\]
where
\[
\al_I:=  \frac {(1-\sigma)\al_J }2,\   \    \bt_I:=  \frac {(1+\sigma)\bt_J }2,\   \
\sigma:= C\sqrt{ \frac { K N\log N} {\al_J M }},
\]
and $C$ is an absolute constant.
\end{Lemma}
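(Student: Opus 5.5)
We argue by a random halving. Write $J$ as the index set and choose independent Bernoulli signs $\{\varepsilon_j\}_{j\in J}$; put $I_\pm:=\{j\in J\colon \varepsilon_j=\pm1\}$. For every $f\in X_N$ and $p\in\{1,2\}$ we have the identity
\[
\|R_{I_\pm} f\|_{\ell_p^M}^p=\frac12\|R_Jf\|_{\ell_p^M}^p\pm\frac12\sum_{j\in J}\varepsilon_j|f(j)|^p .
\]
Hence it suffices to find signs with
\[
\sup_{f\in X_N,\ \|f\|_{\ell_p^M}\le 1}\Bigl|\sum_{j\in J}\varepsilon_j|f(j)|^p\Bigr|\le \sigma\,\al_J,\qquad p=1,2,
\]
and with $\bigl|\sum_{j\in J}\varepsilon_j\bigr|\le \sqrt{|J|}$. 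Then $I$ is the smaller of $I_+,I_-$, which gives $|I|\le |J|/2$ and $|I|\ge \frac{|J|}2(1-|J|^{-1/2})$. The two-sided bounds follow with $\al_I=(\al_J-\sigma\al_J)/2$ and $\bt_I\le(\bt_J+\sigma\al_J)/2\le(1+\sigma)\bt_J/2$, using $\al_J\le\bt_J$.

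The sign sum is controlled through Lemma~\ref{p1T2}, applied to the restricted space $Y:=R_J X_N\subset\RR^{J}$, where $\dim Y\le N$. We must first check that $Y$ satisfies a Nikol'skii inequality with respect to the uniform measure on $J$. For $f\in X_N$, the hypotheses with $p=2$ give
\[
\|R_Jf\|_\infty\le\|f\|_\infty\le \sqrt{KN}\,M^{-1/2}\|f\|_{\ell_2^M}\le \sqrt{KN}\,(\al_J M)^{-1/2}\|R_Jf\|_{\ell_2^M}.
\]
Rewriting this in terms of $\|\cdot\|_{L_2^{J}}$ yields $Y\in\textnormal{NI}_{2,\infty}(\sqrt{K'N})$ with $K'=K|J|/(\al_J M)$. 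Lemma~\ref{p1T2}, applied on $\Omega_{|J|}$ after normalizing to $\ell_p^{J}$ balls, then gives
\[
\EE\sup_{g\in Y\cap B_p^{J}}\Bigl|\sum_{j\in J}\varepsilon_j|g(j)|^p\Bigr|\le C\sqrt{\frac{K'N\log N}{|J|}}=C\sqrt{\frac{KN\log N}{\al_J M}} .
\]

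Next we convert this to the normalization used in the statement. If $\|f\|_{\ell_p^M}\le1$, then $\|R_Jf\|_{\ell_p^M}^p\le\bt_J$, so the supremum over $\|f\|_{\ell_p^M}\le 1$ is at most $\bt_J$ times the supremum over $Y\cap B_p^J$. This produces a bound proportional to $\bt_J$, whereas the statement has $\al_J$; the mismatch is harmless, since only the product form of the two-sided bounds is needed. I would therefore write the deviation as $\sigma'\|R_Jf\|^p$, which gives
\[
\|R_If\|^p\in\Bigl[\tfrac{1-\sigma'}{2}\|R_Jf\|^p,\ \tfrac{1+\sigma'}{2}\|R_Jf\|^p\Bigr].
\]
Combining this with $\al_J\|f\|^p\le\|R_Jf\|^p\le\bt_J\|f\|^p$ gives exactly $\al_I,\bt_I$ as stated, provided the bound holds relative to $\|R_Jf\|^p$. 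This is the natural normalization of Lemma~\ref{p1T2} on $Y$.

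Finally we need all three requirements to hold simultaneously for a single sign choice. By Markov's inequality, each of the two suprema, for $p=1$ and $p=2$, exceeds four times its expectation with probability at most $1/4$. By Chebyshev's inequality, $\PP\bigl(|\sum_{j\in J}\varepsilon_j|>2\sqrt{|J|}\bigr)\le 1/4$. Some sign pattern avoids all three events. The factor $2$ here weakens the size bound; to obtain the exact $|J|^{-1/2}$, I would instead use the probability $\ge c>0$ that $|\sum_{j\in J}\varepsilon_j|\le\sqrt{|J|}$, which follows from the central limit theorem or a direct binomial estimate, and use Markov's inequality with larger constants so the total failure probability is below $1$. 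The absolute constants get absorbed into $C$. The main obstacle is this simultaneous control together with the exact size constraint. The Nikol'skii transfer to $Y$ is routine.
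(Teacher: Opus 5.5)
Your proposal is correct and follows the route the paper intends when it says the lemma is obtained "using Lemma~\ref{p1T2}": split $J$ randomly by Rademacher signs, and apply Lemma~\ref{p1T2} to $R_JX_N$ with Nikol'skii parameter $K|J|/(\alpha_J M)$, which gives a deviation bound $\sigma\|R_Jf\|^p_{\ell_p^M}$ relative to $\|R_Jf\|^p$. Then intersect the two Markov events with an event of probability bounded below, $\bigl|\sum_{j\in J}\varepsilon_j\bigr|\le\sqrt{|J|}$, and keep the smaller half. Note that your opening reduction (bounding the sign sum by $\sigma\alpha_J$ over the $\ell_p^M$ unit ball) is not what you actually prove; the relative bound you switch to is the right formulation, and it yields exactly $\alpha_I=(1-\sigma)\alpha_J/2$ and $\beta_I=(1+\sigma)\beta_J/2$.
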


We can iterate Lemma \ref{p1L2}  with an appropriate stopping time and finish the proof.

{
\begin{Example}
Let $Q \subset \mathbb{Z}^d$ be an arbitrary set of frequencies, and consider the space
\[
\mathcal{T}(Q) := \operatorname{span}\bigl\{ e^{\mathbf{i} \<k,\cdot\>} : k \in Q \bigr\}
\]
of all trigonometric polynomials with frequencies from $Q$ on the cube
$\Omega = [0, 2\pi)^d$, equipped with the normalized Lebesgue measure.
Then $\mathcal{T}(Q) \in \textnormal{NI}_{2,\infty}(\sqrt{N})$ with
$N := \dim \mathcal{T}(Q) = |Q|$
and Theorem \ref{thm-8-1} implies
that
there exist $m \leq C \epsilon^{-2} N\log N$
points $\xi^1, \ldots, \xi^m\in \Omega$
such that
\[
(1-\epsilon)\|f\|_{L_1(\Omega, \mu)} \leq \frac{1}{m} \sum_{j=1}^m |f(\xi^j)| \leq (1+\epsilon) \|f\|_{L_1(\Omega, \mu)},\   \ \forall f\in \mathcal{T}(Q).
\]
Similarly, for $1<p<2$, there exist
$m \leq C_p(\varepsilon) N\log N(\log\log N)^2$
points $\xi^1, \ldots, \xi^m\in \Omega$
such that
\[
(1-\epsilon)\|f\|_{L_p(\Omega, \mu)}^p \leq \frac{1}{m} \sum_{j=1}^m |f(\xi^j)|^p \leq (1+\epsilon) \|f\|_{L_p(\Omega, \mu)}^p,\   \ \forall f\in \mathcal{T}(Q).
\]
\end{Example}
}    	
    	
\section{Sampling discretization in the uniform norm}
\label{DD}
    	
    	
In this section, we will survey some recent progress on  sampling discretization of the uniform norm. We assume that $\Omega$ is a compact subset of $\RR^d$ equipped  with a Borel probability measure $\mu$.
Let $X_N$ denote an $N$-dimensional subspace of the space $\mathcal{C}(\Og)$ of continuous functions on $\Og$. For $A\subset \Og$ and $f\in \mathcal{C}(\Og)$, define $\|f\|_A=\sup_{x\in A} |f(x)|$.
We say  $X_N$  admits an $L_\infty$- Bernstein-type discretization theorem  with parameter  $m\in\NN$, and positive constant $C$ if there exist  $m$ points  $\xi^j\in\Omega$, $j=1,2,\ldots, m$
for which
\begin{equation}\label{9-1a}
\|f\|_{\Omega} \leq C \max _{1 \leq j \leq m}\left|f\left(\xi^j\right)\right|,\   \ \forall f\in X_N.
\end{equation}
For convenience, we denote by $m(X_N;\infty;  C)$ the minimum number $m$ of points required for the discretization \eqref{9-1a} with a given constant $C>0$.

The first results on discretization of the uniform norm were obtained by Bernstein \cite{B1,B2} (see also \cite{Zy}, Ch.10, Theorem (7.28)). In recent years this problem has been extensively studied in various settings {(e.g. see \cite{DP, KKT, VT168, KoTe}).  We will focus on recent progress on sampling discretization of $L_\infty$ norm.  We refer to the  surveys \cite{DPTT, KKLT}  and  the references therein for historical comments on this problem.
}

\subsection{General bounds of $m(X_N;\infty;  C)$  with $C$ being independent of $N$}

We start with the following general {well-known} result
{(see, e.g.,~\cite[Theorem~1.3]{KKT})}, which in particular gives the exponential  upper bound $m(X_N;\infty;  2)\leq 9^N$.

\begin{Theorem}\label{thm-9-1}\footnote{ One of the referees kindly pointed  out  that the bound in this theorem can be improved from $(1+\f 8 \va)^N$ to $(1+\f 4 \va)^N$ by working with an $(\va/2)$-net for the set of the extreme points of the unit ball of the dual space.}
Let $X_N$ be an $N$-dimensional subspace of $L_\infty(\Omega)$. Given any $\epsilon\in (0, 1]$, there exists a sequence  $\left\{\xi^j\right\}_{j=1}^m$ of $m \leq \bigl(1+\frac 8 \epsilon\bigr)^N$ points  in $\Omega$  satisfying
\begin{equation}\label{9-3:2024}
\max_{x\in\Omega}|f(x)| \leq (1+\epsilon) \max _{1\leq j\leq m} \left|f\left(\xi^j\right)\right|,\   \ \forall f\in X_N.
\end{equation}
\end{Theorem}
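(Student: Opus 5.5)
The plan is a net argument on the unit sphere of $X_N$ in the uniform norm, followed by a bootstrap that removes the net error. Set $S:=\{f\in X_N\colon \|f\|_{L_\infty(\Omega)}=1\}$.

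First, pick a $\delta$-net $\{f_1,\ldots,f_n\}\subset S$ of $S$ in $\|\cdot\|_{L_\infty(\Omega)}$, with $\delta=\epsilon/4$ (any $\delta$ with $\delta/(1-\delta)$ small compared to $\epsilon$ works). Since $X_N$ is an $N$-dimensional normed space, the volumetric bound \eqref{5-2} controls the size of such a net. Applying it to the sphere, or to the ball with the factor-$2$ loss for centers in the set, gives
\[
n\le \Bigl(1+\frac{2}{\delta}\Bigr)^N=\Bigl(1+\frac 8\epsilon\Bigr)^N .
\]

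Second, assign one point to each net element. For each $k$, the continuity or boundedness of $f_k$ lets us choose $\xi^k\in\Omega$ with $|f_k(\xi^k)|\ge 1-\eta$, where $\eta>0$ is arbitrarily small. If the supremum is attained, as for compact $\Omega$ and continuous functions, take $\eta=0$. Otherwise absorb $\eta$ into $\delta$. The point set is $\{\xi^k\}_{k=1}^n$, so $m\le n$.

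Third, compare $f$ with its nearest net element. For $f\in S$, pick $k$ with $\|f-f_k\|_{L_\infty(\Omega)}\le\delta$. Then
\[
\max_j|f(\xi^j)|\ge |f(\xi^k)|\ge |f_k(\xi^k)|-\delta\ge 1-\eta-\delta .
\]
By homogeneity, $\|f\|_{L_\infty(\Omega)}\le (1-\delta-\eta)^{-1}\max_j|f(\xi^j)|$ for all $f\in X_N$.

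Finally, check the constants, which is the only real obstacle. We need $(1-\delta-\eta)^{-1}\le 1+\epsilon$ for $\epsilon\in(0,1]$. With $\delta=\epsilon/4$ and $\eta$ tiny this holds, because $1/(1-\epsilon/4-\eta)\le 1+\epsilon$ whenever $\epsilon\le 1$ and $\eta$ is small: the condition reduces roughly to $\epsilon/4\le \epsilon(3/4-\epsilon/4)$, which holds for $\epsilon\le 1$.

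The counting must be done carefully so that it yields exactly $(1+8/\epsilon)^N$. I would use a maximal $\delta$-separated subset of $S$, whose elements automatically lie in $S$. The balls of radius $\delta/2$ around these points are disjoint and contained in $(1+\delta/2)B_{X_N}$. Comparing volumes gives $n\le(1+2/\delta)^N$, and with $\delta=\epsilon/4$ this equals $(1+8/\epsilon)^N$.
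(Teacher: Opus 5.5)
Your proof is correct and is essentially the paper's argument: an $\epsilon/4$-net of $X_N$ in the uniform norm with at most $(1+8/\epsilon)^N$ elements, one sample point per net element at which that element (nearly) attains its sup norm, and the triangle inequality. Your version differs from the paper's only in two small refinements, both in your favour. First, you place the net on the unit sphere rather than the unit ball, so you lose only one $\delta$ instead of the paper's two losses of $\epsilon/4$; this leaves room to take $\delta=\epsilon/2$ and obtain the footnote's $(1+4/\epsilon)^N$ when suprema are attained. Second, your $\eta$ handles suprema that are not attained, a case the paper sidesteps by simply assuming $|f_j(\xi^j)|=\|f_j\|_{L_\infty(\Omega)}$.
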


\begin{proof}
Using \eqref{5-2}, we can find a finite sequence $$\{f_j\}_{j=1}^m\subset B:=\{f\in X_N:\  \|f\|_{L_\infty(\Omega)}\leq 1\}\   \     \text{with}\   \ m\leq \Bigl(1+\frac 8 \epsilon\Bigr)^N $$  such that
$B\subset \bigcup_{j=1}^m  (f_j+\frac \epsilon 4 B)$.  For each $1\leq j\leq m$, let  $\xi^j\in\Omega$ be such that $|f_j(\xi^j)|=\|f_j\|_{L_\infty(\Omega)}$.
For any $f\in X_N$ with $\|f\|_{L_\infty(\Omega)} =1$, we can find an integer $1\leq j\leq m$ such that $\|f_j-f\|_{L_\infty(\Omega)}\leq \frac \epsilon4$, which implies
\[ \|f\|_{L_\infty(\Omega)}\leq \|f_j\|_{L_\infty(\Omega)} +\frac \epsilon 4 \leq |f(\xi^j)| +\frac \epsilon 2 \|f\|_{L_\infty(\Omega)}.\]
The estimate \eqref{9-3:2024} then follows since $(1-\frac \epsilon 2)^{-1} \leq 1+\epsilon$.	
\end{proof}

Theorem \ref{thm-9-1} establishes  an  exponential upper bound  in  the dimension $N$ for the number of points  required to discretize the uniform norm in $X_N$.
In general, the discretization \eqref{9-1a} of the uniform norm in $X_N$
with a constant $C$ independent of $N$ may not be possible unless  the number $m$ of required points  grows exponentially with $N$ {(see, e.g.,
\cite{VT168} and \cite[Theorem 1.2]{KKT})}.
More precisely,  the following result {(see \cite{VT168})}, which  complements
Theorem \ref{thm-9-1}, shows that there exists an $N$-dimensional subspace $X_N\subset C[0, 2\pi]$  for which
\[
m(X_N;\infty;  C)\ge (N/e) e^{cN/C^2}\   \ \text{with $c>0$ being an absolute constant}.
\]

\begin{Theorem} [{\cite{VT168}}]\label{thm-9-2}
Let $X_N:=\spn\bigl\{e^{ik_j x}\colon j=1,2,\ldots, N\bigr\}$, where  $\left\{k_j\right\}_{j=1}^N$ is a lacunary sequence of positive integers satisfying  that $k_1=1$  and   $ k_{j+1} \geq b k_j$,  $ j=1,2,\ldots, N-1$ for some constant $b>1$.     	
If there exist a sequence  $\left\{\xi^j\right\}_{j=1}^m \subset [0, 2\pi)$  of $m$ points  and a constant $C>0$ such that
$$
\max_{x\in [0, 2\pi]}|f(x)| \leq C \max _j\left|f\left(\xi^j\right)\right|,  \quad \forall f \in X_N,
$$
then one must have
$$
m \geq(N / e) e^{c N / C^2}
$$
with a positive constant $c$ which may only depend on $b$.
\end{Theorem}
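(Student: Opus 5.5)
The plan is to turn the discretization hypothesis into a statement about all subsets of $\{1,\ldots,N\}$ and then count. Given points $\xi^1,\ldots,\xi^m$ with $\|f\|_{L_\infty}\le C\max_j|f(\xi^j)|$ on $X_N$, I would test this on Riesz-type products. For a sign vector or subset $A\subset\{1,\ldots,N\}$ I would use the polynomial $f_A(x)=\sum_{j\in A} e^{ik_jx}$ or, more flexibly, $f_\theta=\sum_j \theta_j e^{ik_jx}$ with unimodular coefficients $\theta_j$. By lacunarity (a Sidon/Riesz-product type property, with constants depending only on $b$), $\|f_\theta\|_\infty \ge c_b\sum_j|\theta_j|= c_bN$ for every choice of $\theta$. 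Indeed, for lacunary sequences the sup norm is comparable to the $\ell_1$ norm of the coefficients, which I would verify using Riesz products $\prod_j(1+\cos(k_jx+\phi_j))$, after splitting into a bounded number of sub-lacunary subsequences with ratio $\ge 3$ if $b<3$. Hence for every $\theta$ there is a sample point $\xi^{j}$ with $|f_\theta(\xi^j)|\ge c_bN/C$.

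The counting step uses a probabilistic argument. I would take $\theta$ to be random signs $\pm1$ (Rademacher). For each fixed point $\xi$, the quantity $f_\theta(\xi)=\sum_j\theta_j e^{ik_j\xi}$ is a Rademacher sum with coefficients of modulus one, so Hoeffding's inequality, applied to the real and imaginary parts, gives
\[
\PP\bigl[|f_\theta(\xi)|\ge tN\bigr]\le 4e^{-cNt^2}.
\]
With $t=c_b/C$ and a union bound over the $m$ points, the probability that some $\xi^j$ satisfies $|f_\theta(\xi^j)|\ge c_bN/C$ is at most $4me^{-cN/C^2}$. By the previous paragraph this probability must equal $1$, which forces $m\ge \frac14 e^{cN/C^2}$.

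To obtain the stated form $(N/e)e^{cN/C^2}$, I would refine the counting. One option is to restrict to subsets $A$ of size about $N/C^2$-scaled, or to combine a fixed-size-subset count with $\binom{N}{s}\ge (N/s)^s$. A simpler route is to note the trivial bound $m\ge N$: the points must be a uniqueness set for $X_N$. Then $\max\bigl(N,\tfrac14e^{cN/C^2}\bigr)\ge (N/e)e^{c'N/C^2}$ after adjusting $c$, whenever $C$ is large. For small $C$, that is $C\le C_0$, the exponential term dominates and absorbs the factor $N/e$ by shrinking $c$. The case split needs care, since $c$ may depend only on $b$. If that fails, I would use Rademacher chaos restricted to a subfamily and apply Sauer–Shelah-style counting.

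The main obstacle is the lower bound $\|f_\theta\|_\infty\ge c_bN$ uniformly over all sign patterns for general $b>1$. I would first try the Riesz product argument on a subsequence with ratio $\ge3$, which keeps a proportion $\gtrsim 1/\log_b 3$ of the indices. The remaining frequencies would be handled by choosing $\theta_j=0$ off the subsequence. This only changes $N$ by a $b$-dependent factor, which is absorbed into $c$.
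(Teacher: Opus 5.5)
Your first two steps are correct. A Riesz product on a subsequence $S$ with ratio $\ge 3$, of size $n\ge N/\lceil\log_b 3\rceil$, gives $\|f_\theta\|_\infty\ge n/2$ for unimodular $\theta$ supported on $S$. Random signs with Hoeffding and a union bound then give $m\ge \frac14 e^{cN/C^2}$ with $c=c(b)$. (The survey gives no proof of this theorem; it only cites [VT168].)

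The gap is the final step. The inequality $\max\{N,\tfrac14 e^{cN/C^2}\}\ge (N/e)e^{c'N/C^2}$ fails for every fixed $c'>0$. Take $C^2=c'N/2$: the right-hand side is $eN$, while the left-hand side is $\max\{N,\tfrac14e^{2c/c'}\}=N$ once $N$ is large. Even read optimally, your two cases cover only $C^2\gtrsim N$ and $C^2\lesssim N/\log N$. The window in between is not vacuous: by Theorem~\ref{BP1c}, applied to real and imaginary parts, $O(N)$ points already give a constant of order $\sqrt N$. In that window the statement says $\log(em/N)\gtrsim N/C^2$, whereas a union bound over random signs can only give $\log m\gtrsim N/C^2$. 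This is a limitation of random signs, not of bookkeeping: for a typical sign vector the maximum over $m$ sample points can genuinely be of order $\sqrt{N\log m}$. Your fallback (Rademacher chaos, Sauer--Shelah counting) does not address this.

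The missing ingredient is the \emph{existence} of a much better sign vector, i.e.\ a Spencer-type discrepancy bound (Spencer's six-standard-deviations theorem, or Gluskin's volumetric proof of it). For $M\ge n$ vectors $v^1,\dots,v^M\in[-1,1]^n$ there is $\theta\in\{\pm1\}^n$ with $\max_\nu|\langle\theta,v^\nu\rangle|\le K_0\sqrt{n\log(eM/n)}$, where $K_0$ is absolute. Apply it to the $2m\ge n$ real vectors $(\cos k_j\xi^\nu)_{j\in S}$ and $(\sin k_j\xi^\nu)_{j\in S}$. Together with your Riesz-product bound and the hypothesis, this gives
\[
\frac{n}{2C}\le\max_\nu|f_\theta(\xi^\nu)|\le\sqrt2\,K_0\sqrt{n\log(2em/n)},\qquad\text{hence}\qquad m\ge\frac{n}{2e}\,e^{n/(8K_0^2C^2)}.
\]
Equivalently, you can argue by volume. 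The hypothesis says the absolute convex hull of these $2m$ vectors contains $(4C)^{-1}[-1,1]^n$. The Carl--Pajor bound on the volume of the absolute convex hull of $2m$ points of Euclidean norm $\le\sqrt n$ then yields $\log(1+2m/n)\gtrsim n/C^2$.

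Only after this do you pass from $n$ to $N$. The $b$-dependent constant factors in front of the exponential can be absorbed by shrinking $c'$: when $c'N/C^2\le 1$, the uniqueness bound $m\ge N$ already dominates $(N/e)e^{c'N/C^2}$. What cannot be absorbed this way is the factor $N$ itself, which is why a bound of the form $\log(em/N)$ is essential.
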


\subsection{Estimates of $m(X_N;\infty;  C_N)$  with $C_N$ being dependent on $N$}

Theorems   \ref{thm-9-1}  and  \ref{thm-9-2}  provide upper and lower estimates of  the number of sample points needed for the discretization inequality \eqref{9-1a} with $C$ being independent of $N$.
We now mention some results, where the discretization constant $C$ in \eqref{9-1a}  is allowed to depend on $N$.
In the classical setting of univariate algebraic and trigonometric polynomials,
this problem was extensively studied (see, e.g.,~\cite{CR92, EZ64}). However, as is customary in this
survey, our goal is to consider a more general framework of abstract finite-dimensional
spaces.

We start with the following result, which is well known for $\ell=1$ (see \cite[Proposition 1.2.3]{Nov}).

\begin{Theorem}  \label{thm-9-3}
Let $X_N$ be an  $N$-dimensional real subspace of $\mathcal{C}(\Omega)$.  Given
a positive integer $\ell$,  let
\[ X_N(\ell):=\spn\Bl\{ f_1f_2\ldots f_\ell:   \  \ f_1,\ldots, f_\ell\in X_N\Br\},\]
and
$N_\ell: =\dim X_N(\ell)$.
Then there exists  a set $\left\{\xi^j\right\}_{j=1}^{N_\ell}$ of  points in $\Omega$  such that
$$
\max_{x\in\Omega}|f(x)| \leq (N_\ell)^{\frac 1 {\ell}}  \max _{1\leq j\leq N_\ell}  \left|f\left(\xi^j\right)\right|,\    \   \ \forall f\in X_N.
$$
\end{Theorem}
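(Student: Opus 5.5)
The plan is to reduce to the case $\ell=1$, applied to the space $X_N(\ell)$, and to use power functions $f^\ell$. The case $\ell=1$ is the classical Auerbach/Fekete-type (maximal determinant) argument. Let $Y:=X_N(\ell)$ with $\dim Y=N_\ell=:n$ and basis $g_1,\dots,g_n$. Since $\Omega$ is compact and the $g_i$ are continuous, the function
\[
(x_1,\dots,x_n)\mapsto \bigl|\det[g_i(x_j)]_{i,j=1}^n\bigr|
\]
attains its maximum on $\Omega^n$ at some $(\xi^1,\dots,\xi^n)$. This maximum is positive, because linear independence of the $g_i$ forces the determinant to be nonzero for some choice of points.

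\textbf{Step 1 (Lagrange basis).} Define
\[
\ell_j(x):=\det[g_i(\xi^1),\dots,g_i(x),\dots,g_i(\xi^n)]\big/\det[g_i(\xi^k)],
\]
where $x$ replaces $\xi^j$ in the $j$-th column. Then $\ell_j\in Y$ and $\ell_j(\xi^k)=\delta_{jk}$. Maximality of the determinant gives $|\ell_j(x)|\le 1$ for all $x\in\Omega$. Hence every $g\in Y$ satisfies $g=\sum_j g(\xi^j)\ell_j$, and therefore
\[
\|g\|_{\Omega}\le \sum_{j=1}^n|g(\xi^j)|\le n\max_{1\le j\le n}|g(\xi^j)|.
\]

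\textbf{Step 2 (powers).} For $f\in X_N$ real valued, we have $f^\ell=f\cdots f\in X_N(\ell)$. Applying Step 1 to $g=f^\ell$ gives
\[
\|f\|_\Omega^\ell=\|f^\ell\|_\Omega\le N_\ell\max_j|f(\xi^j)|^\ell.
\]
Taking $\ell$-th roots yields the stated bound with constant $(N_\ell)^{1/\ell}$, using exactly $N_\ell$ points.

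No real obstacle is expected. The only care needed is the existence of the maximizing tuple, which follows from compactness of $\Omega$ and continuity (functions in $\mathcal{C}(\Omega)$ are continuous), and checking that $|\ell_j|\le 1$, which holds since replacing $\xi^j$ by $x$ cannot increase the maximal $|\det|$.
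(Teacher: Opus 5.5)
Your proposal is correct and follows essentially the same route as the paper: maximize the determinant over $\Omega^{N_\ell}$, use the resulting Lagrange functions $L_j\in X_N(\ell)$ with $\|L_j\|_{L_\infty(\Omega)}\le 1$, and apply the interpolation identity to $f^\ell$ before taking $\ell$-th roots. Maximizing $|\det|$ instead of $\det$, as you do, is a harmless variant that makes the positivity of the maximum immediate.
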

\begin{proof}
Given a  basis $\{ \varphi_1,\ldots, \varphi_{N_\ell}\}\subset X_N(\ell)$  of  $X_N(\ell)$, we define a function $\Delta: \Omega^{N_\ell}\to \RR $ by
$$\Delta (x_1, x_2,\ldots, x_{N_\ell}):=\det \left [ \begin{matrix}
\varphi_1(x_1) & \varphi_2(x_1)&\cdots &\varphi_{N_\ell }(x_1)\\
\varphi_1(x_2) & \varphi_2(x_2)&\cdots &\varphi_{N_\ell }(x_2)\\
\vdots &\vdots & \cdots & \vdots\\
\varphi_{1} (x_{N_\ell} ) & \varphi_{2}(x_{N_\ell})&\cdots &\varphi_{N_\ell} (x_{N_\ell})
\end{matrix}\right]$$
for  $x_1, \ldots, x_{N_\ell}\in\Omega$.
Since $\Delta$ is a continuous function on the compact set $\Omega^{N_\ell}$, there must exist  $\xi^1, \ldots, \xi^{N_\ell}\in\Omega$   such that
$$\Delta(\xi^1,\ldots, \xi^{N_\ell} )\equiv \Delta_{N,\ell}: =\max_{x_1, \ldots, x_{N_\ell} \in\Omega} \Delta(x_1, \ldots, x_{N_\ell}).$$
It is easily seen that $\Delta_{N,\ell}>0$.
For $1\leq j\leq N_\ell$, define  $L_j: \Omega\to \RR$ by
$$L_{j} (x) =\frac{\Delta(\xi^1,\ldots, \xi^{j-1}, x, \xi^{j+1},\ldots, \xi^{N_\ell} )} {\Delta_{N,\ell}},\   \ x\in\Omega.$$
Clearly,  $L_j\in X_N(\ell)$,  $L_j(\xi^k) =\delta_{k,j}$ and $\|L_j\|_{L_\infty(\Omega)}= 1$ for  $1\leq j, k\leq N_\ell$. Moreover, for each $f\in X_N$, we have
\[ \bigl(f(x)\bigr)^\ell  =\sum_{j=1}^{N_\ell} f(\xi^j)^\ell  L_j(x),\  \ x\in\Omega.\]
This implies that for each $f\in X_N$,
\[\|f\|_{L_\infty(\Omega)}^\ell \leq  N_\ell \max_{1\leq j\leq N_\ell}|f(\xi^j)|^\ell\implies \|f\|_{L_\infty(\Omega)}\leq ( N_\ell)^{\frac 1\ell} \max_{1\leq j\leq N_\ell}|f(\xi^j)|.\]
{This completes the proof.}
\end{proof}
We give a few remarks on Theorem \ref{thm-9-3}.
\begin{Remark} \textnormal{(i)} Of particular interest is the case when $\ell=1$, where Theorem \ref{thm-9-3} gives the discretization inequality with  the minimum number of points:
\begin{equation} \label{9-3b}	\max_{x\in\Omega}|f(x)| \leq N   \max _{1\leq j\leq N}  \left|f\left(\xi^j\right)\right|,\    \   \ \forall f\in X_N.\end{equation}
Using the notation introduced in the beginning of this section, this implies $m(X_N;\infty;  N)=N$.
{
This  is a  well known result, for which the above proof can be found in the book of E. Novak  \cite[Proposition 1.2.3]{Nov}}.

\textnormal{(ii)}{ For any positive integer $\ell$, we have
\[ N_\ell \leq \binom{N-1+\ell}\ell \leq \Bl( \frac { e (N-1+\ell)}\ell\Br)^\ell.\]
However, in many cases, this last estimate is far from being optimal. For example, if $X_N=\Pi_n^d$ is the space of all algebraic polynomials of total degree at most $n$ in $d$ variables, and $\Omega$ is a compact subset of $\RR^d$ with  nonempty interior, then
$$ N= \dim \Pi_n^d=\binom{n+d}{d}\   \ \text{ and }\  \
N_\ell \leq  \dim\Pi_{n\ell}^d.$$
As a result, we have
\[ \f {N_\ell} N\leq \f{\binom{n\ell+d}{d}}{\binom{n+d}{d}}=\f{ (n\ell+d) (n\ell+d-1)\cdots (n\ell+1)}{(n+d) (n+d-1) \cdots (n+1)}=\ell^d \prod_{j=1}^d \f{n +\f j\ell} {n+j}\leq \ell^d. \]
Assume in addition  that $n\ge d^2$ and $d\ge 2$. Then
\[ N=\prod_{j=1}^d \Bigl(1+\f nj\Bigr) \ge \Bigl(1+\f nd\Bigr)^d\ge d^d.\]
 Now we specify the  integer $\ell\in\NN$ such  that $$\log N  -1\leq \ell<\log N.$$ Then $\ell\ge d\log d -1$,
\[ N_\ell \leq \ell^d N\leq N(\log N)^d,\]
and
\[ N_\ell^{\f 1\ell} \leq  (\ell^d N)^{\f 1\ell} =\exp \Bl( \f d \ell \log \ell +\f1\ell \log N\Br)\leq C_1, \]
where $C_1>0$ is an absolute constant (independent of $d$).
Thus,   Theorem \ref{thm-9-3} provides      a set $\{\xi^j\}_{j=1}^m$ of $$m\leq N_\ell\leq N(\log N)^d $$ points in $\Omega$ such that
\[ \max_{x\in\Omega} |f(x)| \leq C_1 \max_{1\leq j\leq m}|f(\xi^j)|,\   \ \forall f\in X_N=\Pi_n^d.\]
This last estimate    was previously established  in \cite[Proposition~23]{BBCL}, with implicit constants depending on~$d$.}\footnote{ We are grateful to  the referee for noting that the implicit constants can be made dimension-free, rather than depending on $d$ as they did in the previous draft.}
    	 	

\end{Remark}


{If the number of sampling points $m$ is relaxed to be of order $N$ rather than exactly equal to $N$,
one can prove a refinement of the discretization inequality~\eqref{9-3b}, in which the
constant $N$ is improved to  $C\sqrt{N}$.
We state the following result, which  follows directly from the proof of Theorem 2 of \cite{KPUU2}.}
\begin{Theorem}[{\cite[Theorem 2]{KPUU2}}]\label{BP1b}
Given any real  $N$-dimensional subspace $X_N$ of $\cC(\Omega)$ and any $\epsilon\in (0, 1]$,
there exists a set     $\{\xi^j\}_{j=1}^m\subset \Omega$ of $m \le (1+\va) (N+1)$ points such that
\[
\max_{x\in\Omega}|f(x)| \le   C \sqrt{N}\va^{-1}\max_{1\le j\le m} |f(\xi^j)|,
\]
where $C>0$ is an absolute constant.
\end{Theorem}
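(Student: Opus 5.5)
We prove Theorem~\ref{BP1b} by combining an $L_2$-discretization with few points and a Christoffel-type estimate for the sup norm. Fix an auxiliary probability measure $\nu$ on $\Omega$ adapted to $X_N$; the natural choice is a measure of ``optimal design'' type. By the Kiefer--Wolfowitz equivalence theorem (equivalently, the John ellipsoid of the symmetric convex body $\{f\in X_N:\|f\|_{L_\infty(\Omega)}\le 1\}$), there is a discrete probability measure $\nu$ on $\Omega$, supported on at most $N(N+1)/2+1$ points, such that the Christoffel function of $X_N$ with respect to $\nu$ satisfies $\Phi_\nu(x)=\sum_k\varphi_k(x)^2\le N$ for all $x\in\Omega$, where $\{\varphi_k\}$ is an $L_2(\nu)$-orthonormal basis. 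By \eqref{Nik-equiv}, this means $X_N\in \textnormal{NI}_{2,\infty}(\sqrt N)$ with respect to $\nu$:
\[
\|f\|_{L_\infty(\Omega)}\le \sqrt N\,\|f\|_{L_2(\Omega,\nu)},\qquad f\in X_N .
\]
If we had to work with a general compact $\Omega$, we would first use continuity and compactness to obtain such a $\nu$ as a weak-$*$ limit of maximizers of $\log\det$ of the Gram matrix.

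Next we discretize the $L_2(\nu)$ norm with $m\le(1+\epsilon)(N+1)$ points and a lower bound whose constant is of order $\epsilon^2$. For this we invoke a Batson--Spielman--Srivastava type spectral sparsification with $(1+\epsilon)N$ points, rather than Theorem~\ref{thm-4-2}, whose constant $C\epsilon^{-2}$ would spoil the point count. Applied to the rank-one decomposition $I=\int \mathbf u\otimes\mathbf u\,d\nu$, where $\mathbf u$ is the $\nu$-orthonormal basis vector, it produces points $\xi^j$ and weights $w_j\ge0$ such that
\[
c\,\epsilon^2\|f\|^2_{L_2(\nu)}\le \sum_j w_j|f(\xi^j)|^2\le \|f\|_{L_2(\nu)}^2 .
\]
Since $\nu$ is discrete, $\int\mathbf u\otimes\mathbf u\,d\nu$ is a finite sum and BSS applies directly. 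The extra $+1$ in $N+1$ leaves slack for a possible rounding step.

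Combining the two estimates gives
\[
\|f\|_{L_\infty(\Omega)}\le\sqrt N\|f\|_{L_2(\nu)}\le \sqrt N\,(c\epsilon^2)^{-1/2}\Bigl(\sum_j w_j|f(\xi^j)|^2\Bigr)^{1/2}\le C\sqrt N\,\epsilon^{-1}\Bigl(\sum_j w_j\Bigr)^{1/2}\max_j|f(\xi^j)| .
\]
It remains to bound $\sum_j w_j$. Testing the upper bound on the $\nu$-orthonormal basis and summing over $k$ gives $\sum_j w_j\Phi_\nu(\xi^j)\le N$. From this one controls $\sum_j w_j$ provided the chosen points satisfy $\Phi_\nu(\xi^j)\gtrsim N$, which holds on the support of an optimal design, where $\Phi_\nu\equiv N$. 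The weights then sum to $O(1)$, and we obtain the claimed constant $C\sqrt N\epsilon^{-1}$.

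The main obstacle is the second step: obtaining the lower frame constant of order $\epsilon^2$ with only $(1+\epsilon)N$ points while keeping $\sum_j w_j=O(1)$. BSS normalized so that $\sum_j w_j\mathbf u(\xi^j)\otimes\mathbf u(\xi^j)$ has spectrum in $[(1-\sqrt{1/(1+\epsilon)})^2,(1+\sqrt{1/(1+\epsilon)})^2]$ gives a lower constant $\asymp\epsilon^2$ and trace $\sum_j w_j|\mathbf u(\xi^j)|^2\asymp N$. Because every support point has $|\mathbf u(\xi^j)|^2=N$, this yields $\sum_j w_j=O(1)$, so the accounting closes. I would carry out exactly this normalization, checking the trace identity carefully.
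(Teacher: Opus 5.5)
Your argument is correct, but it takes a different route from the paper.

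\textbf{The paper's route.} It adjoins the constant function, setting $X_{N+1}=\operatorname{span}(X_N\cup\{1\})$. It uses Kiefer--Wolfowitz only through the Nikol'skii bound $\|f\|_{L_\infty(\Omega)}\le\sqrt{N+1}\,\|f\|_{L_2(\mu)}$ on $X_{N+1}$. It then applies Schechtman's weighted $L_2$ discretization (Theorem~\ref{Thm-3-11-sc}) with $b=1+\epsilon$ to a possibly non-discrete $\mu$. Finally, it controls $\sum_j\lambda_j\le C^2\epsilon^{-2}$ by testing the upper frame bound on $f\equiv 1$.

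\textbf{Your route.} You use the sharper half of the equivalence theorem: a discrete optimal design $\nu$ with $\Phi_\nu\le N$ everywhere and $\Phi_\nu=N$ at every atom. You should state why the equality holds: $\int\Phi_\nu\,d\nu=\operatorname{tr} I=N$ together with $\Phi_\nu\le N$. You then run BSS on the finite decomposition $I=\sum_i\nu(x_i)\mathbf u(x_i)\mathbf u(x_i)^T$, so the selected nodes automatically lie in $\operatorname{supp}\nu$. The trace of the sparsified matrix then gives $N\sum_j w_j\le\kappa N$, where $\kappa=\bigl(\frac{\sqrt{1+\epsilon}+1}{\sqrt{1+\epsilon}-1}\bigr)^2\le(1+\sqrt2)^4\epsilon^{-2}$ is the BSS condition number. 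Only this upper bound on the trace is needed, not trace $\asymp N$.

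\textbf{Minor points.}
\begin{itemize}
\item The ``$+1$ slack for rounding'' is unnecessary, since BSS already gives at most $(1+\epsilon)N$ nonzero weights.
\item No weak-$*$ limit is needed for general compact $\Omega$. Maximize $\log\det$ over the compact convex hull of $\{\mathbf u(x)\mathbf u(x)^T:x\in\Omega\}$ and use Carath\'eodory to get a discrete maximizer.
\end{itemize}

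\textbf{What each route buys.} Yours gives $m\le(1+\epsilon)N$ instead of $(1+\epsilon)(N+1)$ and an explicit constant $(1+\sqrt2)^2\epsilon^{-1}$. It avoids the constant-function trick, and it avoids any approximation of a general measure, since everything is finite from the start. In exchange, it relies on the exact identity $\Phi_\nu=N$ on the support and on drawing nodes only from that support. The paper's route needs only a Nikol'skii inequality $\textnormal{NI}_{2,\infty}(\sqrt{N+1})$ for some measure. It treats the $L_2$ discretization as a black box with nodes anywhere in $\Omega$. That modularity is what lets the paper obtain Theorem~\ref{BP1c} simply by swapping in Corollary~\ref{cor-3-10}.
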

For completeness, we include  the  proof from \cite{KPUU2} below.
The argument relies on Theorem \ref{Thm-3-11-sc} and the following  result by J. Kiefer and J. Wolfowitz \cite{KW}.	
    \begin{Lemma} [\cite{KW}] \label{lem-9-2} Given a real $N$-dimensional subspace $X_N$ of $\mathcal{C}(\Omega)$,  there exists a probability measure $\mu$ on $\Omega$ such that for any $f\in X_N$,
\[ \max_{x\in\Omega} |f(x)|\leq \sqrt{N}\|f\|_{L_2(\mu)}.\]
\end{Lemma}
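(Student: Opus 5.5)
The plan is to prove this by the classical $D$-optimal design argument: choose $\mu$ to maximize the determinant of the Gram matrix of a fixed basis, and read off the Nikol'skii bound from the first-order optimality condition.

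Fix a basis $u_1,\ldots,u_N$ of $X_N$ and write $\mathbf u(x)=(u_1(x),\ldots,u_N(x))$, which is a continuous map $\Omega\to\RR^N$. For a Borel probability measure $\nu$ on $\Omega$, set
\[
G(\nu):=\int_\Omega \mathbf u(x)^T\mathbf u(x)\,d\nu(x),
\]
a positive semi-definite $N\times N$ matrix. If $G(\nu)$ is nonsingular, then $\{G(\nu)^{-1/2}\mathbf u^T\}$ is an orthonormal basis of $(X_N,\|\cdot\|_{L_2(\nu)})$. Hence, by \eqref{Nik-equiv}, for every $f\in X_N$,
\[
\sup_{x\in\Omega}\frac{|f(x)|^2}{\|f\|_{L_2(\nu)}^2}\le \sup_{x\in\Omega}\mathbf u(x)G(\nu)^{-1}\mathbf u(x)^T .
\]
It therefore suffices to find $\nu=\mu$ with $\mathbf u(x)G(\mu)^{-1}\mathbf u(x)^T\le N$ for all $x\in\Omega$.

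\textbf{Step 1: existence of a maximizer.} Maximize $\Psi(\nu):=\log\det G(\nu)$ over all probability measures on $\Omega$, with $\Psi=-\infty$ when $G(\nu)$ is singular. The supremum is finite and not $-\infty$. Since the $u_j$ are linearly independent functions on $\Omega$, there are points $x_1,\ldots,x_N$ with $\det[u_i(x_j)]\neq0$, and the uniform measure on these points has nonsingular Gram matrix. The supremum is finite because $\|G(\nu)\|\le\sup_x|\mathbf u(x)|^2<\infty$.

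Because $\Omega$ is compact, the set of probability measures is weak-$*$ compact (Riesz representation together with Banach--Alaoglu). The map $\nu\mapsto G(\nu)$ is weak-$*$ continuous, since its entries are integrals of the continuous functions $u_iu_j$. Also $\log\det$ is upper semicontinuous on positive semi-definite matrices. So a maximizer $\mu$ exists, and $G(\mu)$ is nonsingular. Alternatively, one could restrict to measures supported on at most $N(N+1)/2+1$ points via Carath\'eodory's theorem, and then use compactness of $\Omega^k\times$ simplex.

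\textbf{Step 2: first-order condition.} Fix $x\in\Omega$ and consider $\mu_t:=(1-t)\mu+t\delta_x$ for $t\in[0,1]$. Then $G(\mu_t)=G(\mu)+t\bigl(\mathbf u(x)^T\mathbf u(x)-G(\mu)\bigr)$. By Jacobi's formula,
\[
\frac{d}{dt}\Big|_{t=0^+}\log\det G(\mu_t)=\operatorname{tr}\Bigl(G(\mu)^{-1}\bigl(\mathbf u(x)^T\mathbf u(x)-G(\mu)\bigr)\Bigr)=\mathbf u(x)G(\mu)^{-1}\mathbf u(x)^T-N .
\]
Maximality of $\mu$ forces this one-sided derivative to be $\le 0$, so $\mathbf u(x)G(\mu)^{-1}\mathbf u(x)^T\le N$ for every $x\in\Omega$. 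Combined with the reduction above, this gives $\max_x|f(x)|\le\sqrt N\|f\|_{L_2(\mu)}$.

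I expect the main obstacle to be Step 1, namely making the compactness and semicontinuity argument rigorous and ensuring that the maximizer is nondegenerate. Step 2 is then a routine differentiation.
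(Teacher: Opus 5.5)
Your argument is correct. It is the classical Kiefer--Wolfowitz $D$-optimal design proof: maximize $\log\det$ of the Gram matrix over probability measures on the compact set $\Omega$, then obtain $\mathbf u(x)G(\mu)^{-1}\mathbf u(x)^T\le N$ from the one-sided derivative along $(1-t)\mu+t\delta_x$. This is exactly the result the paper invokes by citing \cite{KW}, without reproducing a proof, and you handle the compactness, nondegeneracy and differentiation steps correctly.
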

   \begin{proof}[Proof of Theorem \ref{BP1b}]
   Let $X_{N+1}=\operatorname{span}\, \big( X_N\cup\{1\}\big)$. Clearly,  $\dim X_{N+1}\leq N+1$. Applying Lemma \ref{lem-9-2} to the space $X_{N+1}$ instead of $X_N$, we can find a probability measure $\mu$ on $X_{N+1}$ for which
   \[ \max_{x\in\Omega} |f(x)|\leq \sqrt{N+1}\|f\|_{L_2(\mu)},\  \ \forall f\in X_{N+1}.\]
     By Theorem \ref{Thm-3-11-sc} applied to  $b=1+\va$ and the space $X_{N+1}\subset L_2(\mu)$, there exist a set $\{\xi^j\}_{j=1}^m\subset \Og$ of $m\leq (1+\va) (N+1)$ points  and a sequence $\{\ld_j\}_{j=1}^m$ of non-negative weights such that
   \[ \|f\|_{L_2(\mu)}\leq \Bl( \sum_{j=1}^m \ld_j |f(\xi^j)|^2 \Br)^{1/2} \leq C \va^{-1} \|f\|_{L_2(\mu)},\   \ \forall f\in X_{N+1}.\]
   Since the constant function $1$ belongs to the space $ X_{N+1}$, this implies that
   \[ (\sum_{j=1}^m \ld_j )^{\f12}\leq  C \va^{-1}.\]
   It then follows from Lemma \ref{lem-9-2}  that for any $f\in X_N$,
   \eq{ \max_{x\in\Og}|f(x)|&\leq \sqrt{N+1} \|f\|_{L_2(\mu)}\leq \sqrt{N+1} \Bl( \sum_{j=1}^m \ld_j |f(\xi^j)|^2 \Br)^{1/2}\\
   &\leq \sqrt{N+1} \max_{1\leq j\leq m} |f(\xi^j)|  \Bl( \sum_{j=1}^m \ld_j  \Br)^{1/2}\leq C \va^{-1}\sqrt{N}.}
   \end{proof}

    Replacing Theorem \ref{Thm-3-11-sc} with Corollary \ref{cor-3-10} in the argument above yields the following result.

\begin{Theorem}\label{BP1c}
Given any $N$-dimensional subspace $X_N$ of $\cC(\Omega)$ and any $\epsilon\in (0, 1)$,
there exists a set     $\{\xi^j\}_{j=1}^m\subset \Omega$ of $m \le C \epsilon^{-2}  N$ points such that
\begin{align*}
\max_{x\in\Omega}|f(x)| & 		\leq
(1+\va)  \sqrt{N}\max_{1\le j\le m} |f(\xi^j)|,
\end{align*}
where $C>1$ is an absolute constant.
\end{Theorem}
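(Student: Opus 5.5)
We follow the proof of Theorem \ref{BP1b} line by line, replacing Theorem \ref{Thm-3-11-sc} by Corollary \ref{cor-3-10}. The gain is that Corollary \ref{cor-3-10} gives a two-sided $(1\pm\epsilon)$ inequality instead of one with distortion $C/(b-1)$. The price is $m\le C\epsilon^{-2}N$ points rather than $(1+\epsilon)(N+1)$.

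First we enlarge the space. Let $X_{N+1}:=\spn(X_N\cup\{1\})$, so $\dim X_{N+1}\le N+1$. By Lemma \ref{lem-9-2} applied to $X_{N+1}$ there is a probability measure $\mu$ on $\Omega$ with $\max_{x\in\Omega}|f(x)|\le \sqrt{N+1}\,\|f\|_{L_2(\mu)}$ for all $f\in X_{N+1}$. Next, Corollary \ref{cor-3-10}, applied to $X_{N+1}\subset L_2(\Omega,\mu)$ with a parameter $\delta$ to be fixed, gives points $\xi^1,\ldots,\xi^m$ with $m\le C\delta^{-2}(N+1)\le 2C\delta^{-2}N$ and positive weights $\lambda_j$ such that
\[
(1-\delta)\|f\|_{L_2(\mu)}^2\le \sum_{j=1}^m\lambda_j|f(\xi^j)|^2\le (1+\delta)\|f\|_{L_2(\mu)}^2,\qquad \forall f\in X_{N+1}.
\]
Testing the upper inequality on $f\equiv 1$ gives $\sum_j\lambda_j\le 1+\delta$. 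Then, for $f\in X_N$,
\[
\|f\|_{L_\infty(\Omega)}\le \sqrt{N+1}\,\|f\|_{L_2(\mu)}\le \sqrt{\frac{N+1}{1-\delta}}\Bigl(\sum_j\lambda_j|f(\xi^j)|^2\Bigr)^{1/2}\le \sqrt{\frac{(N+1)(1+\delta)}{1-\delta}}\,\max_j|f(\xi^j)|.
\]

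This yields the constant $\sqrt{N+1}\,(1+O(\delta))$, not $(1+\epsilon)\sqrt N$. The main obstacle is removing the extra $+1$ in $\sqrt{N+1}$. I see two routes.

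The first route avoids adjoining the constant. Apply Lemma \ref{lem-9-2} and Corollary \ref{cor-3-10} to $X_N$ itself. One must then control $\sum_j\lambda_j$ without the constant function. The natural attempt is to use that the Kiefer--Wolfowitz optimal design has $\Phi(x)=\sum_k u_k(x)^2\le N$, with equality on the support of $\mu$. Summing the discretization inequality over an orthonormal basis $u_1,\ldots,u_N$ gives $\sum_j\lambda_j\Phi(\xi^j)\le (1+\delta)N$. If every node lies where $\Phi=N$, this gives $\sum_j\lambda_j\le 1+\delta$. Since $\mu$ is supported on $\{\Phi=N\}$, and the proof of Corollary \ref{cor-3-10} picks nodes from the support of $\mu$ (it samples from $\mu$ after the change of density), the nodes can be chosen in $\operatorname{supp}\mu$. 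I expect this to work and to give the constant $\sqrt{N(1+\delta)/(1-\delta)}$.

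The second route is a fallback for the case $N$ small relative to $\epsilon^{-1}$. Theorem \ref{thm-9-1} already gives constant $1+\epsilon$ with $(1+8/\epsilon)^N$ points. Alternatively, one can simply absorb $\sqrt{(N+1)/N}\le 1+\frac1{2N}$ into $1+\epsilon$ when $N\ge\epsilon^{-1}$.

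Finally, choosing $\delta=c\epsilon$ with a small absolute constant $c$ makes $\sqrt{(1+\delta)/(1-\delta)}\le 1+\epsilon$. The number of points is then $m\le C'\epsilon^{-2}N$, which completes the argument.
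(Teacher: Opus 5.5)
Your proof is correct, and it differs from the paper's at the one step that matters: bounding the total weight $\sum_j\lambda_j$.

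The paper reruns the proof of Theorem~\ref{BP1b} with Corollary~\ref{cor-3-10} in place of Theorem~\ref{Thm-3-11-sc}. It adjoins the constant function, applies Lemma~\ref{lem-9-2} to $X_{N+1}$, and tests the upper inequality on $f\equiv 1$. This trick is completely black-box: it needs no information about where the nodes lie, and it works for any weighted discretization, e.g.\ Schechtman's. However, as you observed, taken literally it gives the constant $\sqrt{N+1}\,\bigl((1+\delta)/(1-\delta)\bigr)^{1/2}$. The factor $\sqrt{1+1/N}$ can be absorbed into $1+\epsilon$ only when $N$ is at least of order $\epsilon^{-1}$.

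Your first route removes this loss. Lemma~\ref{lem-9-2} applied to $X_N$ itself gives $\Phi\le N$ everywhere, while $\int_\Omega\Phi\,d\mu=N$, so $\Phi=N$ $\mu$-a.e. Summing the upper inequality over an orthonormal basis then yields $\sum_j\lambda_j\le 1+\delta$ whenever the nodes lie in $\{\Phi=N\}$. You get exactly $(1+\epsilon)\sqrt N$ with $m\le C\epsilon^{-2}N$. So your version actually delivers the $\sqrt N$ in the statement, which the paper's one-line argument does not quite do for $N$ small compared with $\epsilon^{-1}$.

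Two tightenings are worth making.

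First, do not justify the location of the nodes through the internals of the proof of Corollary~\ref{cor-3-10}. Instead, apply the corollary as a black box to the probability space $(E,\mu|_E)$ with $E:=\{x\in\Omega:\Phi(x)=N\}$. This set is closed, since $\Phi$ is continuous, and has full $\mu$-measure. Hence all $L_2(\mu)$-norms are unchanged, and $f\mapsto f|_E$ is injective on $X_N$.

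Even simpler: Lemma~\ref{lem-9-2} says precisely that $X_N\in\textnormal{NI}_{2,\infty}(\sqrt N)$ with respect to $\mu$. So Theorem~\ref{thm-4-2} with $K=1$ gives $m\le C\delta^{-2}N$ nodes with equal weights $1/m$. Only its lower inequality is needed: $\|f\|_{L_\infty(\Omega)}\le\sqrt N\,\|f\|_{L_2(\mu)}\le(1-\delta)^{-1/2}\sqrt N\max_j|f(\xi^j)|$.

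Second, your fallback route would not close the gap on its own. The bound $(1+8/\epsilon)^N$ is not $O(\epsilon^{-2}N)$ already for $N=3$ and small $\epsilon$. That leaves uncovered the range of $N$ between $3$ and order $\epsilon^{-1}$. Since your first route works, the fallback is simply not needed.
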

    	 	
We point out that Theorems \ref{BP1b} and \ref{BP1c} were  proved in \cite[Theorem~6.6]{KKT} and \cite{KoTe}  under the extra condition
$X_N \in \textnormal{NI}_{2,\infty} (H)$ for some constant $H>0$.

    \section*{Acknowledgments}
     The authors sincerely thank the anonymous referees for their time and insightful comments.  Their suggestions have been fully incorporated into the revised manuscript.

\bibliographystyle{plain}
\bibliography{references}

{

\bigskip
\hskip1.4 em\vbox{\noindent F. Dai \\ Department of Mathematical and Statistical Sciences \\ University of Alberta, Edmonton, \\
Alberta T6G 2N8, Canada \\
 {\tt fdai@ualberta.ca}
}
}
{

\bigskip
\hskip1.4 em\vbox{\noindent E.~Kosov \\ Centre de Recerca Matem\`atica, \\
Campus de Bellaterra \\
 Edifici~C 08193
  Bellaterra (Barcelona), Spain. \\
 {\tt kosoved09@gmail.com}
}
}

{

\bigskip
\hskip1.4 em\vbox{\noindent V.N. Temlyakov \\ Steklov Mathematical Institute of Russian Academy of Sciences,\\
 Moscow, Russia; \\  Lomonosov Moscow State University;\\ Moscow Center of Fundamental and Applied Mathematics; \\ University of South Carolina, USA. \\
 {\tt temlyakovv@gmail.com}
}
}

\endddoc
\def\updated{15jun11}